\def\IE{{\mathbb E}}
\def\IL{{\mathbb L}}
\def\IP{{\mathbb P}}
\def\IR{{\mathbb R}}
\def\IQ{{\mathbb Q}}
\def\IZ{{\mathbb Z}}
\def\chd{{\check{D}}}
\def\chv{{\check{V}}}
\def\chc{{\check{C}}}
\def\chu{{\check{U}}}
\def\n{\noindent}
\def\dsl{\textstyle\sum\limits}
\def\dil{\textstyle\int}
\def\dis{\displaystyle}
\def\o{\omega}
\def\fr{\mbox{\footnotesize $\dis\frac{1}{2}$}}
\def\ov{\overline}
\def\ve{\varepsilon}
\def\f{\footnotesize}
\def\r{\rightarrow}
\def\point{{\mbox{\large $.$}}}
\def\wh{\widehat}
\def\wt{\widetilde}
\def\cC{{\cal C}}
\def\cI{{\cal I}}
\def\cV{{\cal V}}
\newtheorem{theorem}{Theorem}[section]
\newtheorem{lemma}[theorem]{Lemma}
\newtheorem{corollary}[theorem]{Corollary}
\newtheorem{proposition}[theorem]{Proposition}
\newtheorem{remark}[theorem]{Remark}
\begin{document}

\baselineskip14pt
\noindent

\thispagestyle{empty}
\noindent

~ 
\vspace{2cm}
~

\bigskip
\begin{center}
{ \bf DISCONNECTION, RANDOM WALKS,  \\   AND RANDOM INTERLACEMENTS}
\end{center}

%\begin{center}
%{\large \bf UPPER BOUND FOR DISCONNECTION BY RANDOM WALKS AND RANDOM INTERLACEMENTS}
%\end{center}

%\vspace{1cm}

\begin{center}
Alain-Sol Sznitman
\end{center}

%\medskip
%\bigskip
%\begin{center}
%Preliminary Draft
%\end{center}

\vspace{0.5cm}
\begin{abstract}
We consider random interlacements on $\IZ^d$, $d\ge 3$, when their vacant set is in a strongly percolative regime. We derive an asymptotic upper bound on the probability that the random interlacements disconnect a box of large side-length from the boundary of a larger homothetic box. As a corollary, we obtain an asymptotic upper bound on a similar quantity, where the random interlacements are replaced by the simple random walk. It is plausible, but open at the moment, that these asymptotic upper bounds match the asymptotic lower bounds obtained by Xinyi Li and the author in \cite{LiSzni14}, for random interlacements, and by Xinyi Li in \cite{Li}, for the simple random walk. In any case, our bounds capture the principal  exponential rate of decay of these probabilities, in any dimension $d \ge 3$.
\end{abstract}

%\bigskip\n
%AMS 2000 subject classification: 60J10, 60K35, 82C41
%\\[2ex]
%Keywords and phrases: disconnection time, random walks on graphs, discrete cylinders
\vspace{4cm}
~

\n
Departement Mathematik\\ %\hfill December 2014\\
ETH-Z\"urich\\
CH-8092 Z\"urich\\
Switzerland

\newpage
\thispagestyle{empty}

~

\newpage
\setcounter{page}{1}

\setcounter{section}{-1}
\section{Introduction} 

How costly is it for simple random walk in $\IZ^d$, $d \ge 3$, to disconnect a box of large side-length $N$ from the boundary of a larger homothetic box? In this article, we obtain an asymptotic upper bound on this probability, as the application of a main result that provides an asymptotic upper bound on a similar quantity, where the random walk is replaced by random interlacements at a level $u$ such that the corresponding vacant set is in a strongly percolative regime. Although open at present, it is plausible that these asymptotic upper bounds are sharp, and respectively match the asymptotic lower bound of \cite{LiSzni14}, for random interlacements, and of \cite{Li}, for simple random walk. In any case, thanks to the current stand of knowledge concerning the strong percolative regime for the vacant set of random interlacements, see \cite{DrewRathSapo14a} (and also \cite{Teix11}, when $d \ge 5$), the bounds that we obtain in this article establish an exponential decay at rate $N^{d-2}$ of the above mentioned probabilities, in any dimension $d \ge 3$. This improves on \cite{Szni}, where such an exponential decay could only be ascertained in high enough dimension. The strategy that we employ here is nevertheless strongly influenced by the approach developed in \cite{Szni}. Yet, by several aspects, \cite{Szni} heavily relied on the specific nature of the Gaussian free field, and on the use of Gaussian bounds. One incidental interest of the present work is to uncover which objects in the present set-up correspond to the concepts introduced in the context of \cite{Szni}.

\medskip
We will now describe our results in a more precise form. We refer to Section 1 for further details concerning the various objects and notation. Given $u \ge 0$, we let $\cI^u$ stand for the random interlacements at level $u$ in $\IZ^d$, $d \ge 3$, and $\cV^u = \IZ^d \backslash \cI^u$, for the corresponding vacant set at level $u$. We denote by $\IP$ the probability governing the random interlacements. As $u$ increases, $\cV^u$ becomes thinner, and it is by now well-known (see \cite{Szni10}, \cite{SidoSzni09} or \cite{CernTeix12}, \cite{DrewRathSapo14c}), that there is a critical $u_* \in (0,\infty)$ such that
\begin{equation}\label{0.1}
\begin{array}{l}
\mbox{for $u < u_*$, $\IP$-a.s., $\cV^u$ has an infinite connected component},\\
\mbox{for $u > u_*$, $\IP$-a.s., all connected components of $\cV^u$ are finite}.
\end{array}
\end{equation}
Further, one can introduce a critical value
\begin{equation}\label{0.2}
u_{**} = \inf\{u \ge 0; \;\liminf\limits_L \;\IP[B_L \;\overset{^{\mbox{\footnotesize $\cV^u$}}}{\mbox{\Large $\longleftrightarrow$}} \hspace{-4ex}/ \quad \;\;\partial B_{2L}] = 0\},
\end{equation}

\n
where $B_L$ stands for the ball in the supremum norm with center $0$ and radius $L$ in $\IZ^d$, and $\partial B_{2L}$ for the boundary of $B_{2L}$ (see the beginning of Section 1), and the event under the probability refers to the existence of a nearest-neighbor path in $\cV^u$ starting in $B_L$ and ending in $\partial B_{2L}$. One knows (see \cite{PopoTeix13}, \cite{Szni12a}) that $u_{**}$ is finite and for $u > u_{**}$ the vacant set $\cV^u$ is in a strongly non-percolative regime, with a stretched exponential decay of the two-point function $\IP[0 \stackrel{\cV^u}{\longleftrightarrow} x]$ (in fact, an exponential decay, when $d \ge 4$, see \cite{PopoTeix13}). It is a simple fact that $u_* \le u_{**}$, but an important open problem whether the equality $u_* = u_{**}$ actually holds.

\medskip
In this article, we investigate the large $N$ asymptotic behavior of the probability of the disconnection event
\begin{equation}\label{0.3}
A_N = \{B_N \;\overset{^{\mbox{\footnotesize $\cV^u$}}}{\mbox{\Large $\longleftrightarrow$}} \hspace{-4ex}/ \quad \; \;S_N\},
\end{equation}

\n
where there is no nearest-neighbor path in $\cV^u$ from $B_N$ to $S_N = \{x \in \IZ^d; |x|_\infty = [MN]\}$, and where $|\cdot |_\infty$ stands for the sup-norm, $M > 1$ is a fixed (arbitrary) number, and $[MN]$ denotes the integer part of $MN$.

\medskip
In the strongly non-percolative regime $u > u_{**}$ for the vacant set, the event $A_N$ becomes typical for large $N$, and (see (\ref{1.44}))
\begin{equation}\label{0.4}
\lim\limits_N \;\IP[A_N] = 1, \;\mbox{when $u > u_{**}$}.
\end{equation}
On the other hand, when $u \le u_{**}$, the proof of Theorem 0.1 of \cite{LiSzni14} yields a lower bound:
\begin{equation}\label{0.5}
\liminf\limits_N \; \mbox{\f $\dis\frac{1}{N^{d-2}}$} \;\log \IP[A_N] \ge - \mbox{\f $\dis\frac{1}{d}$} \; (\sqrt{u}_{**} - \sqrt{u})^2 \,{\rm cap}_{\IR^d}([-1,1]^d),
\end{equation}

\n
where ${\rm cap}_{\IR^d}([-1,1]^d)$ stands for the Brownian capacity of $[-1,1]^d$ (see for instance \cite{PortSton78}, p.~58, for the definition).

\medskip
The main result of this article is contained in Theorem \ref{theo6.3} and provides an asymptotic upper bound on $\IP[A_N]$, in the strongly percolative regime $0 < u < \ov{u}$ of the vacant set, where $\ov{u}$ ($\le u_* \le u_{**}$) is a certain critical value, see (\ref{2.3}). Specifically, it is shown in Theorem \ref{theo6.3} that
\begin{equation}\label{0.6}
\limsup\limits_N \; \mbox{\f $\dis\frac{1}{N^{d-2}}$} \;\log \IP[A_N] \le - \mbox{\f $\dis\frac{1}{d}$} \; (\sqrt{\ov{u}}- \sqrt{u})^2 \,{\rm cap}_{\IR^d}([-1,1]^d), \;\mbox{for $0 < u < \ov{u}$}.
\end{equation}

\n
Crucially, one knows that $\ov{u} > 0$ in all dimensions $d \ge 3$, by the results of \cite{DrewRathSapo14a}. In addition, it is plausible, but open at the moment, that the inequalities $\ov{u} \le u_* \le u_{**}$ are equalities, i.e.~$\ov{u} = u_* = u_{**}$, so that one would actually infer from (\ref{0.5}), (\ref{0.6}) the asymptotic behavior
\begin{equation}\label{0.7}
\lim\limits_N \; \mbox{\f $\dis\frac{1}{N^{d-2}}$} \;\log \IP[A_N] = - \mbox{\f $\dis\frac{1}{d}$} \; (\sqrt{u_*} - \sqrt{u})^2 \,{\rm cap}_{\IR^d}([-1,1]^d), \;\mbox{for $0 < u <u_*$}.
\end{equation}

\n
In any case, (\ref{0.6}) improves on the results from Section 7 of \cite{Szni} that came as a consequence (via a Dynkin-type isomorphism) of upper bounds on similar disconnection probabilities by the sub-level-sets of the Gaussian free field. Indeed, for one thing, the asymptotic upper bounds of Section 7 of \cite{Szni}, due to the use of the isomorphism theorem, are not expected to match the lower bound (\ref{0.5}), moreover, in the present state of knowledge concerning the critical levels considered in \cite{Szni}, the results of \cite{Szni} only ensure an exponential decay at rate $N^{d-2}$ for $\IP[A_N]$, when the dimension is high enough.

\medskip
One can also compare (\ref{0.5}), (\ref{0.6}) to corresponding results for supercritical Bernoulli percolation. Unlike what happens in the present context, disconnecting $B_N$ from $S_N$ in the percolative phase would involve an exponential cost proportional to $N^{d-1}$ (and surface tension), in the spirit of the study of the existence of a large finite cluster at the origin, see p.~216 of \cite{Grim99}, and Theorem 2.5, p.~16 of \cite{Cerf00}.

\medskip
As an application of the main Theorem \ref{theo6.3} that proves (\ref{0.6}), we obtain, as a corollary, an asymptotic upper bound on the probability of a similar disconnection by simple random walk. If $\cV$ stands for the complement of the set of sites in $\IZ^d$ visited by the simple random walk, and $P_0$ for the probability governing the walk starting at the origin, there is a natural coupling of $\cV$ under $P_0$ and $\cV^u$ under $\IP[\cdot \,|\, 0 \in \cI^u]$ that ensures that $\cV^u \subseteq \cV$ (this coupling was already used in Section 7 of \cite{Szni}). Thus, letting successively $N$ tend to infinity and $u$ to $0$, we show in Corollary \ref{cor6.4} that
\begin{equation}\label{0.8}
\limsup\limits_N \; \mbox{\f $\dis\frac{1}{N^{d-2}}$} \;\log P_0 [B_N \;\overset{^{\mbox{\footnotesize $\cV$}}}{\mbox{\Large $\longleftrightarrow$}} \hspace{-4ex}/ \quad \;\;S_N] \le - \mbox{\f $\dis\frac{1}{d}$} \;\ov{u} \,{\rm cap}_{\IR^d}([-1,1]^d).
\end{equation}

\n
This yields a sharper bound on $\IP[A_N]$ than the application of (\ref{1.10}) and Theorem 6.3 of \cite{Wind08b}. It also improves on the results of Section 7 of \cite{Szni} (for the reasons explained below (\ref{0.7})). In addition, the article \cite{Li} establishes the lower bound
\begin{equation}\label{0.9}
\liminf\limits_N \; \mbox{\f $\dis\frac{1}{N^{d-2}}$} \;\log P_0 [B_N \;\overset{^{\mbox{\footnotesize $\cV$}}}{\mbox{\Large $\longleftrightarrow$}} \hspace{-4ex}/ \quad \;\;S_N] \ge - \mbox{\f $\dis\frac{1}{d}$} \;u_{**} \,{\rm cap_{\IR^d}}([-1,1]^d).  
\end{equation}

\n
In particular, if $\ov{u}$ and $u_{**}$ coincide (and $\ov{u} = u_* = u_{**})$, the combination of (\ref{0.8}) and (\ref{0.9}) would then show that
\begin{equation}\label{0.10}
\lim\limits_N \; \mbox{\f $\dis\frac{1}{N^{d-2}}$} \;\log P_0 [B_N \;\overset{^{\mbox{\footnotesize $\cV$}}}{\mbox{\Large $\longleftrightarrow$}} \hspace{-4ex}/ \quad \;\; S_N] = - \mbox{\f $\dis\frac{1}{d}$} \;u_{*} \,{\rm cap}_{\IR^d}([-1,1]^d).
\end{equation}

\n
As an aside, both asymptotic lower bounds (\ref{0.5}) and (\ref{0.9}) are proved by the application of the change of probability method. This involves certain probability measures $\wt{\IP}_N$ (in the case of (\ref{0.5})) and $\wt{P}_N$ (in the case of (\ref{0.9})) implementing suitable ``strategies'' to produce disconnection. If (and of course this point is open at the moment) the critical values $\ov{u} \le u_* \le u_{**}$ are identical, the results of the present article show that these strategies are (near) optimal, and thus hold special significance. With this in mind, let us say a word about these measures and the disconnection strategies they implement.

\medskip
In the case of random interlacements, i.e. (\ref{0.5}) and \cite{LiSzni14}, the measures $\wt{\IP}_N$ correspond to so-called {\it tilted interlacements} that can be viewed as slowly space-modulated random interlacements at a level (that slowly varies over space) equal to $f^2_N(x) = (\sqrt{u} + (\sqrt{u}_{**} - \sqrt{u}) \,h(\frac{x}{N}))^2$, $x \in \IZ^d$, where $h$ on $\IR^d$ is the solution of the equilibrium problem
\begin{equation}\label{0.11}
\left\{ \begin{array}{l}
\Delta h = 0 \;\mbox{outside $[-1,1]^d$}
\\[0.5ex]
\mbox{$h = 1$ on $[-1,1]^d$ and $h$ tends to $0$ at infinity}.
\end{array}\right.
\end{equation}

\n
Roughly speaking, the tilted interlacements create a ``fence'' around $B_N$, on which they locally behave as interlacements at level $u_{**}$ (actually one picks $u_{**} + \ve$ in place of $u_{**}$ in the formula for $f_N$). They induce locally on this fence a strong non-percolative regime for the vacant set, and thus typically disconnect $B_N$.

\medskip
Informally, the tilted interlacements correspond to a Poisson cloud of bilateral trajectories, where the motion of a particle is governed by the generator $\wt{L}_N g(x) =$ \linebreak $ \frac{1}{2d} \sum_{|x' - x| = 1} \;\frac{f_N(x')}{f_N(x)} \;(g(x') - g(x))$, instead of the discrete Laplacian (corresponding to the replacement of $f_N$ by a constant function in the above formula) in the case of the usual random interlacements. The tilted interlacements actually come up as a strategy to ensure at a preferential entropic cost an expected occupation time that varies over space and equals $f^2_N(x)$ at site $x$ in $\IZ^d$, instead of the constant value $u$ for the interlacement at level $u$. Remarkably, this time constraint induces tilted interlacements that have geometrical traces, which behave as random interlacements with a slowly space-modulated level $f^2_N(x)$. Incidentally, the tilted interlacements possibly offer in a discrete context a microscopic model for the type of ``Swiss cheese'' picture advocated in \cite{VanbBoltHoll01} for the moderate deviations of the volume of the Wiener sausage (however, the relevant modulating functions of \cite{VanbBoltHoll01} are different from those that appear in relation to (\ref{0.5})).

\medskip
In the case of simple random walk, i.e.~(\ref{0.9}) and \cite{Li}, the measures $\wt{P}_N$ correspond to {\it tilted walks} that, informally, behave as the walk started at the origin with generator $\ov{L}_N \,g(x) = \frac{1}{2d} \,\sum_{|x' - x| = 1}  \frac{h_N(x')}{h_N(x)}\, (g(x') - g(x))$ up to the deterministic time $T_N$, and afterwards evolve as simple random walk, where now $h_N(x) = h(\frac{x}{N})$, with $h$ as in (\ref{0.11}) and $T_N$ chosen such that by time $T_N$, the expected time spent by the tilted walk at a point $x \in B_N$ is $u_{**} \,h^2_N(x) = u_{**}$ (by the choice of $h$). Again, remarkably, this creates a ``fence'' around $B_N$, where the vacant set left by the tilted walk by time $T_N$ is locally in a strongly non-percolative regime (one actually uses $u_{**} + \ve$ in place of $u_{**}$, and a compactly supported approximation of $h$ in (\ref{0.11})). In this fashion, $\wt{P}_N$ ensures that with high probability $B_N$ gets disconnected from $S_N$ by the trace of the walk.

\bigskip
We will now present a rough outline of the proof of the main Theorem \ref{theo6.3} that establishes (\ref{0.6}). The general strategy is similar to \cite{Szni}. A quite substantial coarse graining takes place, and informally goes as follows. One considers  ``columns'' of boxes of side-length $L$ (of order $(N \log N)^{\frac{1}{d-1}}$) going from the surface $\{x \in \IZ^d; |x|_\infty = N\}$ of $B_N$ to the surface $S_N$ of $B_{MN}$ (for simplicity, assume $M = 2$). The number of such columns has roughly order $(\frac{N}{L})^{d-1} = \frac{N^{d-2}}{\log N}$. For each box $B$ sitting in the columns, one considers all successive excursions $Z^D_\ell, \ell \ge 1$, going from $D$ to $\partial U$, in the full collection of random interlacements of arbitrary levels, see (\ref{1.41}), where $D$ is a slightly larger box, concentric with $B$, and $U$ a much larger box, concentric with $B$, see (\ref{2.9}), (\ref{2.10}). One has ``good decoupling'' properties of the excursions $Z^D_\ell$, when the boxes are sufficiently far apart. The soft local time technique of \cite{PopoTeix13}, especially in the form developed in the Section 2 of \cite{ComeGallPopoVach13}, offers a very convenient tool to express these properties, see Proposition \ref{prop4.1}, (\ref{4.9}) and Remark \ref{rem4.3}. One shows in Theorem \ref{theo5.1} that in ``almost all columns'', for all boxes within the column, the corresponding excursions $Z^D_\ell$ that originate from random interlacements, with $\ell$ slightly below $\ov{u} \,{\rm cap}(D)$, where ${\rm cap}(D)$ stands for the random walk capacity of $D$, leave a vacant set in $D$ that ``percolates well'', and spend a substantial collective time in $D$, except on an event with super-exponentially decaying probability (at rate $N^{d-2}$). On the other hand, when disconnection by $\cI^u$ occurs (i.e.~$A_N$ is realized), each column must be blocked for the percolation within $\cV^u$. This forces the existence in most columns of a box where the number $N_u(D)$ of excursions from $D$ to $\partial U$ in the interlacement at level $u$ ``essentially'' exceeds $\ov{u} \,{\rm cap}(D)$. After a selection of such boxes, a step with not too high combinatorial complexity, thanks to our choice of $L$, we can use the occupation-time estimates of Section 3 (see Theorem \ref{theo3.2}) to bound $\IP[A_N]$ in essence by $\exp\{- (\sqrt{\ov{u}} - \sqrt{u})^2 \inf_\cC \;{\rm cap}(C) + o(N^{d-2})\}$, where $C = \bigcup_{B \in \cC} D$ and $\cC$ runs over the various collections of selected boxes $B$. Using a projection on the surface of $B_N$ and a Wiener-type criterion, as in \cite{Szni}, one obtains an asymptotic lower bound on ${\rm cap}(C)$ in terms of ${\rm cap}(B_N)$ uniformly over $\cC$, and (\ref{2.6}) quickly follows.

\medskip
We will now describe the organization of this article. Section 1 introduces further notation and recalls various facts concerning random walks, potential theory, and random interlacements. In Section 2 we introduce the strongly percolative regime $u < \ov{u}$ for the vacant set of random interlacements, and the notion of good boxes. We show in Theorem \ref{theo2.3} a super-polynomial decay in $L$ of the probability that a box $B$ is bad at levels below $\ov{u}$. In Section 3 we develop the occupation-time bounds that enter the proof of (\ref{0.6}). The main statement is contained in Theorem \ref{theo3.2}. Section 4 prepares the ground for the next section. We recall some facts about soft local times from  \cite{ComeGallPopoVach13} and set up couplings of the excursions within the interlacements, with independent collections of i.i.d. excursions. The main controls are contained in Proposition \ref{prop4.1}, see also Remark \ref{rem4.3}. In Section 5 we show the super-exponential decay at rate $N^{d-2}$ of the probability of existence of more than a few columns containing a bad box at levels below $\ov{u}$, see Theorem \ref{theo5.1}. In Section 6 we prove the main Theorem \ref{theo6.3} that establishes (\ref{0.6}), and derive (\ref{0.8}) in Corollary \ref{cor6.4}. The Appendix contains the proof of Lemma \ref{lem1.3} from Section 1.

\medskip
Finally, let us state the convention we use concerning constants. We denote by $c,c', \wt{c}$ positive constants changing from place to place that simply depend on $d$. Numbered constants such as $c_0,c_1,\dots$ refer to the value corresponding to their first appearance in the text. Dependence of constants on additional parameters appears in the notation.

\section{Notation and some useful facts} 
\setcounter{equation}{0}

In this section we introduce further notation and collect various facts concerning random walks, potential theory, and random interlacements. In particular, Propositions \ref{prop1.4} and \ref{prop1.5} contain estimates concerning equilibrium measures and entrance distributions that will be especially useful in Sections 3 and 4. Throughout, we tacitly assume that $d \ge 3$.

\medskip
We begin with some notation. For $s,t$ real numbers, we write $s \wedge t$ and $s \vee t$ for the minimum and the maximum of $s$ and $t$, and denote by $[s]$ the integer part of $s$, when $s$ is non-negative. We write $| \cdot |$ and $| \cdot |_\infty$ for the Euclidean and the $\ell^\infty$-norms on $\IR^d$. Given $x \in \IZ^d$ and $r \ge 0$, we let $B(x,r) = \{y \in \IZ^d; |y - x|_\infty \le r\}$ stand for the closed $\ell^\infty$-ball of radius $r$ around $x$. We say that a subset $B$ of $\IZ^d$ is a box when it is a translate of some set $\IZ^d \cap [0,L)^d$, with $L \ge 1$. We often write $[0,L)^d$ in place of $\IZ^d \cap [0,L)^d$, when no confusion arises. Given $A,A'$ subsets of $\IZ^d$, we denote by $d(A,A') = \inf\{|x - x'|_\infty$; $x \in A, x'\in A'\}$ the mutual $\ell^\infty$-distance between $A$ and $A'$. When $A = \{x\}$, we write $d(x,A')$ for simplicity. We let ${\rm diam}(A) = \sup\{|x - x'|_\infty$; $x,x' \in A\}$ stand for the $\ell^\infty$-diameter of $A$, and $|A|$ for the cardinality of $A$. We write $A \subset \subset \IZ^d$ to state that $A$ is a finite subset of $\IZ^d_\point$. We denote by $\partial A = \{y \in \IZ^d \backslash A$; $\exists x \in A$, $|y - x| = 1\}$, and $\partial_i A = \{x \in A$; $\exists y \in \IZ^d \backslash A$, $|y - x| = 1\}$, the boundary, and the internal boundary of $A$. For $f,g$ functions on $\IZ^d$, we write $f_+ = \max\{f,0\}$, $f_- = \max\{ - f,0\}$, and $\langle f,g\rangle = \sum_{x \in \IZ^d} f(x) g(x)$, when the sum is absolutely convergent. Incidentally, we also use the notation $\langle \rho, f\rangle$ for the integral of a function $f$ (on an arbitrary space) with respect to a measure $\rho$, when this quantity is meaningful.

\medskip
We continue with some notation concerning connectivity properties. We say that $x,y \in \IZ^d$ are neighbors, and sometimes write $x \sim y$, when $|y - x| = 1$. We call $\pi$: $\{0,\dots,n\} \rightarrow \IZ^d$ a path, when $\pi(i) \sim \pi(i-1)$, for $1 \le i \le n$. Given $A,B, U$ subsets of $\IZ^d$, we say that $A$ and $B$ are connected in $U$ and write $A \stackrel{U}{\longleftrightarrow} B$, when there exists a path with vales in $U ( \subseteq \IZ^d)$, which starts in $A$ and ends in $B$. When no such path exists, we say that $A$ and $B$ are not connected in $U$, and write $A \stackrel{U}{\nleftrightarrow} B$ (as in (\ref{0.3})).

\medskip
We now introduce some path spaces, and the set-up for continuous-time simple random walk. We consider $\wh{W}_+$ and $\wh{W}$ the spaces of infinite, respectively doubly infinite, $\IZ^d \times (0,\infty)$-valued sequences, such that the first coordinate of the sequence forms an infinite, respectively doubly infinite, nearest-neighbor path in $\IZ^d$, spending finite time in any finite subset of $\IZ^d$, and the sequence of second coordinates has an infinite sum, respectively infinite ``forward'' and ``backward'' sums. The second coordinate is meant to describe the duration of each step corresponding to the first coordinate. We write $\cal{\wh{W}}_+$ and $\cal{\wh{W}}$ for the respective $\sigma$-algebras generated by the coordinate maps. We denote by $P_x$ the law on $\wh{W}_+$ under which $Y_n$, $n \ge 0$, has the law of the simple random walk on $\IZ^d$, starting from $x$, and $\zeta_n$, $n \ge 0$, are i.i.d. exponential variables with parameter $1$, where $(Y_n, \zeta_n)_{n \ge 0}$ stand for the canonical $\IZ^d \times (0,\infty)$-valued coordinates on $\wh{W}_+$. We write $E_x$ for the corresponding expectation. Moreover, when $\rho$ is a measure on $\IZ^d$, we write $P_\rho$ and $E_\rho$, for the measure $\sum_{x \in \IZ^d} \rho(x) P_x$ (not necessarily a probability measure) and its corresponding ``expectation'' (that is, the integral with respect to the measure $P_\rho$).

\medskip
We attach to $w \in \wh{W}_+$ a continuous-time trajectory $X_t(w)$, $t \ge 0$, via
\begin{equation}\label{1.1}
\mbox{$X_t(w) = Y_k(w)$, for $t \ge 0$, when $\dsl^{k-1}_{i=0} \zeta_i(w) \le t < \dsl^k_{i=0} \zeta_i(w)$}
\end{equation}
(if $k = 0$, the sum on the left is understood as $0$).

\medskip
Thus, under $P_x$, the random trajectory $X_\point$ describes the continuous-time simple random walk with unit jump rate starting from $x$.

\medskip
Given $U \subseteq \IZ^d$ and $w \in \wh{W}_+$, we write $H_U(w) = \inf\{t \ge 0; X_t(w) \in U\}$, $T_U(w) = \inf\{t \ge 0; X_t(w) \notin U\}$ for the entrance time in $U$, and the exit time from $U$. Further, we let $\wt{H}_U(w) = \inf\{t \ge \zeta_1(w)$; $X_t(w) \in U\}$ stand for the hitting time of $U$.

\medskip
For $U \subseteq \IZ^d$, we write $\Gamma(U)$ for the space of right-continuous, piecewise-constant functions from $[0,\infty)$ to $U \cup \partial U$, with finitely many jumps on any finite time interval that  remain constant after their first visit to $\partial U$. The space $\Gamma(U)$ is endowed with the canonical shift of trajectories $(\theta_t)_{t \ge 0}$ and with the canonical coordinates, still denoted by $(X_t)_{t \ge 0}$. For $U \subset \subset \IZ^d$, the space $\Gamma(U)$ will be convenient to carry the law of certain excursions. We will also routinely view the law $P_x$ of the continuous-time simple random walk starting from $x$, as a measure on $\Gamma(\IZ^d)$. This will be convenient to carry out certain calculations.

\medskip
We now discuss some potential theory attached to the simple random walk. We write $g(\cdot,\cdot)$, for the simple random walk Green function, and $g_U(\cdot,\cdot)$, for the Green function of the walk killed upon leaving $U(\subseteq \IZ^d)$:
\begin{equation}\label{1.2}
g(x,y) = E_x\big[\dil^\infty_0 1\{X_s = y\}ds\big], \;g_U(x,y) = E_x \big[\dil^{T_U}_0 1\{X_s = y\}ds\big], \;x,y \in \IZ^d\,.
\end{equation}

\n
Both $g(\cdot,\cdot)$ and $g_U(\cdot,\cdot)$ are known to be finite and symmetric, and $g_U(\cdot,\cdot)$ vanishes if one of its arguments does not belong to $U$. When $f$ is a function on $\IZ^d$ such that $\sum_{y \in \IZ^d} g(x,y) | f(y)| < \infty$ for all $x$, in particular, when $f$ is finitely supported, we write
\begin{equation}\label{1.3}
Gf(x) = \dsl_{y \in \IZ^d} g(x,y) f(y), \; \mbox{for $x \in \IZ^d$}\,.
\end{equation}
Due to translation invariance, $g(x,y) = g(x-y,0) \stackrel{\rm def}{=} g(x-y)$, and one knows that (see Theorem 1.5.4, p.~31 of \cite{Lawl91})
\begin{equation}\label{1.4}
g(x) \sim c_0|x|^{2-d}, \; \mbox{as $|x| \rightarrow \infty$, where $c_0 = \mbox{\f $\dis\frac{d}{2}$} \; 
\Gamma\Big(\mbox{\f $\dis\frac{d}{2}$}-1\Big) \,\pi^{-\frac{d}{2}}$}.
\end{equation}

\medskip\n
As a direct consequence of the strong Markov property applied at the exit time of $U$, one has the following relation between the Green function and the killed Green function
\begin{equation}\label{1.5}
g(x,y) = g_U(x,y) + E_x [T_U < \infty, g(X_{T_U},y)], \;\mbox{for $x,y \in \IZ^d$}.
\end{equation}
Given $A \subset \subset \IZ^d$, we write $e_A$ for the equilibrium measure of $A$,
\begin{equation}\label{1.6}
e_A(x) = P_x [\wt{H}_A = \infty] \,1_A(x), \; \mbox{for $x \in \IZ^d$}
\end{equation}
(it is supported by the internal boundary of $A$), and ${\rm cap}(A)$ for the capacity of $A$, which is the total mass of $e_A$:
\begin{equation}\label{1.7}
{\rm cap}(A) = \dsl_{x \in A} e_A(x).
\end{equation}

\n
In the special case of the box $[0,L)^d$ one knows (see (2.16), p.~53 of \cite{Lawl91}) that
\begin{equation}\label{1.8}
c \,L^{d-2} \le {\rm cap}([0,L)^d) \le c' L^{d-2}, \;\mbox{for $L \ge 1$}.
\end{equation}
When $A \subset \subset \IZ^d$ is non-empty, we will denote by $\ov{e}_A$ the normalized equilibrium measure of $A$, namely
\begin{equation}\label{1.9}
\ov{e}_A(x) = \mbox{\f $\dis\frac{1}{{\rm cap}(A)}$} \, e_A(x), \; x \in \IZ^d .
\end{equation}

\n
We further recall (see Theorem T1, p.~300 of \cite{Spit01}) that for $A \subset \subset \IZ^d$
\begin{equation}\label{1.10}
P_x[H_A < \infty] = \dsl_{y \in A} g(x,y) \,e_A(y), \;\mbox{for $x \in \IZ^d$}.
\end{equation}

\n
Moreover, one has the sweeping identity (for instance as a consequence of (1.46) of \cite{Szni10}), stating that for $A \subset A' \subset \subset \IZ^d$,
\begin{equation}\label{1.11}
e_A(x) = P_{e_{A'}} [H_A < \infty, X_{H_A} = x], \; \mbox{for all $x \in \IZ^d$}.
\end{equation}

\n
We will also need to consider for $U \subseteq \IZ^d$, and $A$ finite subset of $U$, the equilibrium measure of $A$ relative to $U$ and the capacity of $A$ relative to $U$:
\begin{equation}\label{1.12}
e_{A,U}(x) = P_x [\wt{H}_A > T_U] \,1_A(x), x \in \IZ^d, \;\mbox{and} \; {\rm cap}_U(A) = \dsl_{x \in A} e_{A,U}(x).
\end{equation}

\n
We will now collect several results concerning continuous-time simple random walk that will be of use in the next sections. The first result shows a remarkable identification of the law of $\int^\infty_0 e_A(X_s)ds$ under $P_{\ov{e}_A}$ that will play a role in the proof of Theorem \ref{theo2.3} in the next section, when showing the super-polynomial decay of the probability that boxes are bad.

\begin{lemma}\label{lem1.1} Assume that $A \subset \subset \IZ^d$ is non-empty. Then, under 
$P_{\ov{e}_A}$
\begin{equation}\label{1.13}
\mbox{$\dil^\infty_0e_A (X_s) ds$ is distributed as an exponential variable of parameter $1$}.
\end{equation}
\end{lemma}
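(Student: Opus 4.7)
I would compute all moments of $Z := \int_0^\infty e_A(X_s)\,ds$ under $P_{\ov{e}_A}$ and match them to those of an $\mathrm{Exp}(1)$ variable. The single identity powering everything is the following consequence of (\ref{1.10}): setting $h(x) := P_x[H_A<\infty] = Ge_A(x)$, one has $h \equiv 1$ on $A$ (trivially, since $H_A = 0$ on $A$), and because $e_A$ is supported in $A$ this gives
\begin{equation*}
e_A\, h \;=\; e_A.
\end{equation*}

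Let $F_n(x) := E_x[Z^n]/n!$, so that $F_0 \equiv 1$. Expanding $Z^n$ as an ordered $n$-fold time integral, applying the Markov property at the smallest time variable, and using Tonelli (all integrands are nonnegative), one obtains the recursion $F_n = G(e_A F_{n-1})$ for $n \ge 1$. In particular $F_1 = Ge_A = h$ by (\ref{1.10}), and a one-line induction then gives $F_n \equiv h$ for every $n \ge 1$: if $F_{n-1}=h$, the identity above yields $e_A F_{n-1} = e_A$, hence $F_n = Ge_A = h$.

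Since $\ov{e}_A$ is supported in $A$ and $h\equiv 1$ there, integrating $F_n$ against $\ov{e}_A$ gives $E_{\ov{e}_A}[Z^n] = n!$ for every $n \ge 0$. These are precisely the moments of an $\mathrm{Exp}(1)$ law, which is determined by its moments (for instance, its moment generating function $\sum_n t^n$ has positive radius of convergence, or one invokes Carleman's criterion), so the claim follows.

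The only step requiring any care is the Markov/Tonelli bookkeeping that produces the recursion $F_n = G(e_A F_{n-1})$, and even this is routine because $e_A \ge 0$ throughout; I do not anticipate any serious obstacle. One could equally well compute the Laplace transform directly: the ansatz $\phi_x(\lambda) = 1 - \frac{\lambda}{1+\lambda}\,h(x)$ solves the Feynman--Kac integral equation $\phi + \lambda G(e_A \phi) = 1$ (again by $e_A h = e_A$), yielding $\phi_{\ov{e}_A}(\lambda) = 1/(1+\lambda)$; the moment route seems marginally cleaner as it sidesteps any uniqueness question for the integral equation.
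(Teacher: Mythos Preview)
Your proof is correct and is essentially the same as the paper's: the recursion $F_n = G(e_A F_{n-1})$ is exactly Kac's moment formula $E_{\ov e_A}[Z^n] = n!\,\langle \ov e_A,(Ge_A)^n 1\rangle$ written iteratively, and your key identity $e_A\,h = e_A$ (i.e.\ $Ge_A = 1$ on $A$) is precisely what the paper uses to conclude $(Ge_A)^n 1 = 1$ on $A$. The only cosmetic difference is that the paper invokes Kac's formula from a reference while you derive the recursion by hand; the moment-determinacy ending is identical.
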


\begin{proof}
By Kac's moment formula (see Theorem 3.3.2, p.~74 of \cite{MarcRose06}), we know that for $n \ge 0$
\begin{equation}\label{1.14}
E_{\ov{e}_A} \big[\big(\dil^\infty_0 e_A(X_s)ds\big)^n\big] = n! \langle \ov{e}_A, (Ge_A)^n 1 \rangle,
\end{equation}

\medskip\n
where $(G e_A)^n$ stands for the $n$-th iteration of the linear operator $G e_A$, composition of $G$, see (\ref{1.3}), and multiplication by $e_A(\cdot)$, on the set of bounded functions on $\IZ^d$. By (\ref{1.10}) we see that $(G e_A) 1 = 1$ on $A$, so that $(G e_A)^n 1 = 1$ on $A$, for all $n \ge 0$, and hence
\begin{equation}\label{1.15}
E_{\ov{e}_A} \big[\big(\dil^\infty_0 e_A(X_s) ds\big)^n\big] = n! \;\mbox{for all $n \ge 0$}.
\end{equation}

\n
Thus, $\int^\infty_0 e_A(X_s)ds$ under $P_{\ov{e}_A}$ has the same moments as an exponential distribution of parameter $1$. By Theorem 3.9, p.~91 in Chapter 2 of \cite{Durr91}, this fact uniquely determines the law of $\int^\infty_0 e_A(X_s)ds$ under $P_{\ov{e}_A}$. The claim follows.
\end{proof}

The next two lemmas are, in essence, preparations for the main Propositions \ref{prop1.4} and \ref{prop1.5}, which provide controls on the equilibrium distribution and on the entrance distribution. These propositions will play an important role in Section 3, when deriving occupation-time estimates, see Proposition \ref{prop3.1}, and in Section 4, when setting up the coupling of excursions based on the soft-local times technique, see Proposition \ref{prop4.1}. The next lemma offers a comparison between the equilibrium measure and the relative equilibrium measure. We refer to the beginning of this section for the definition of a box in $\IZ^d$.

\begin{lemma}\label{lem1.2}
Let $U$ be a box in $\IZ^d$ and $A$ be a subset of $U$. Then,
\begin{equation}\label{1.16}
\mbox{$e_A$ and $e_{A,U}$ have the same support.}
\end{equation}
Moreover, one can uniquely define $\rho_{A,U}(\cdot)$ vanishing outside the support of $e_A$ (or $e_{A,U}$) so that
\begin{equation}\label{1.17}
e_{A,U}(y) = e_A(y)\big(1 + \rho_{A,U}(y)\big), \; \mbox{for} \; y \in \IZ^d.
\end{equation}
Further, one has
\begin{equation}\label{1.18}
\begin{array}{l}
1 \le 1 + \rho_{A,U}(y) \le \mbox{\f $\dis\frac{1}{p}$}, \; \mbox{for all $y \in \IZ^d$, where}
\\
p = \inf\limits_{\partial U} P_x[H_A = \infty] > 0 \vee \Big(1 - c \;\mbox{\f $\dis\frac{{\rm cap}(A)}{d(A, \IZ^d \backslash U)^{d-2}}$}\Big).
\end{array}
\end{equation}
\end{lemma}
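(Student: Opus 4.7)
The plan is to relate $e_A$ and $e_{A,U}$ via the strong Markov property at the exit time $T_U$, and then use (\ref{1.10}) to quantify how close the resulting ratio is to $1$.

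First I would observe that, since $U$ is a finite box and $A\subseteq U$, the walk under $P_x$ for $x\in A$ exits $U$ in finite time $P_x$-a.s., and $X_{T_U}\in\partial U$ lies outside $A$. Hence on the $P_x$-full event $\{T_U<\infty\}$ one has the identity
\begin{equation*}
\{\wt H_A=\infty\}\;=\;\{\wt H_A>T_U\}\cap\{H_A\circ\theta_{T_U}=\infty\},
\end{equation*}
and the strong Markov property at $T_U$, combined with (\ref{1.6}) and (\ref{1.12}), yields for every $x\in A$
\begin{equation*}
e_A(x)\;=\;\sum_{y\in\partial U} P_x[\wt H_A>T_U,\,X_{T_U}=y]\,P_y[H_A=\infty].
\end{equation*}
Setting $p=\inf_{y\in\partial U} P_y[H_A=\infty]$ and using $p\le P_y[H_A=\infty]\le 1$ in the above sum (and $e_A=e_{A,U}=0$ off $A$) immediately gives the two-sided sandwich
\begin{equation*}
p\cdot e_{A,U}(x)\;\le\;e_A(x)\;\le\;e_{A,U}(x),\qquad x\in\IZ^d.
\end{equation*}
Assuming $p>0$ (verified below), the right inequality forces $\{e_A=0\}\subseteq\{e_{A,U}=0\}$ and the left inequality forces the reverse inclusion, proving (\ref{1.16}); defining $\rho_{A,U}=e_{A,U}/e_A-1$ on the common support and $0$ elsewhere settles (\ref{1.17}) and yields the bound $1\le 1+\rho_{A,U}\le 1/p$ of (\ref{1.18}).

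It then remains to estimate $p$ from below. Positivity $p>0$ is automatic in $d\ge 3$: for any $y\in\partial U\subseteq\IZ^d\setminus A$, transience of the walk gives $P_y[H_A=\infty]>0$, and $\partial U$ is finite so the infimum is positive. For the quantitative bound I would plug $y\in\partial U$ into (\ref{1.10}): since $z\in A\subseteq U$ and $y\notin U$ force $|y-z|\ge|y-z|_\infty\ge d(A,\IZ^d\setminus U)$, the asymptotics (\ref{1.4}) (used as a uniform upper bound $g(\cdot)\le c\,|\cdot|^{2-d}$ on $\IZ^d\setminus\{0\}$) give $g(y,z)\le c\,d(A,\IZ^d\setminus U)^{2-d}$, whence
\begin{equation*}
P_y[H_A=\infty]\;=\;1-\sum_{z\in A}g(y,z)\,e_A(z)\;\ge\;1-c\,\frac{{\rm cap}(A)}{d(A,\IZ^d\setminus U)^{d-2}},
\end{equation*}
uniformly in $y\in\partial U$, which is the second lower bound in (\ref{1.18}).

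No step is really difficult; the only potential pitfalls are careful bookkeeping in the strong Markov decomposition of $\{\wt H_A=\infty\}$ (which uses $T_U<\infty$ a.s.\ and $\partial U\cap A=\emptyset$) and citing (\ref{1.4}) in the uniform form needed even for small $|y-z|$. Both are routine, so the main obstacle is essentially notational, and the core of the argument lies in the single strong-Markov identity for $e_A$ derived above.
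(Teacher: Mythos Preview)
Your proof is correct and follows essentially the same route as the paper's: both apply the strong Markov property at $T_U$ to obtain the sandwich $p\,e_{A,U}\le e_A\le e_{A,U}$ (the paper compresses this into the single chain $e_{A,U}(y)\ge e_A(y)=E_y[\wt H_A>T_U,\,P_{X_{T_U}}[H_A=\infty]]\ge p\,e_{A,U}(y)$), deduce equality of supports from $p>0$, and then bound $p$ from below via (\ref{1.10}) and (\ref{1.4}). The only cosmetic difference is that the paper justifies $p>0$ by the remark ``$U$ is a box'' (so $\partial U$ is a finite set disjoint from $A$), whereas you invoke transience explicitly; both are fine.
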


\begin{proof}
By (\ref{1.6}), (\ref{1.12}) we know that $e_A \le e_{A,U}$. In addition, when $y$ belongs to the support of $e_{A,U}$, then
\begin{equation}\label{1.19}
e_{A,U}(y) \ge e_A(y) = P_y [\wt{H}_A = \infty] = E_y \big[\wt{H}_A > T_U, P_{X_{T_U}}[H_A = \infty]\big] \ge e_{A,U}(y) \,p.
\end{equation}

\n
Since $p > 0$ (recall $U$ is a box), we see that $y$ belongs to the support of $e_A$, and the claim (\ref{1.16}) follows. We can thus uniquely define $\rho_{A,U}$ (vanishing outside the common support of $e_A$ and $e_{A,U}$) so that (\ref{1.17}) holds. The first line of (\ref{1.18}) is an immediate consequence of (\ref{1.19}). As for the second line, we already observed that $p > 0$, and in addition, one has
\begin{equation}\label{1.20}
p = 1 - \sup\limits_{\partial U} P_x[H_A < \infty] > 1 - c \;\mbox{\f $\dis\frac{{\rm cap}(A)}{d(A, \partial U )^{d-2}}$}\,,
\end{equation}
by (\ref{1.10}), (\ref{1.7}), (\ref{1.4}). This completes the proof of (\ref{1.18}), and hence of Lemma \ref{lem1.2}.
\end{proof}

The next lemma states a decoupling effect that will play an important role in the proofs of Propositions \ref{prop1.4} and \ref{prop1.5} below. The proof is close to the arguments on p.~50 of \cite{Lawl91}, and can be found in the Appendix. We recall the end of the Introduction for the convention concerning constants.

\begin{lemma}\label{lem1.3}
There exist $c_1\ge 2$, $c_2 > 0$, such that for $K \ge c_1$, $L \ge 1$, and any $A \subseteq B(0,L)$, finite $U \supseteq B(0,KL)$, $y \in A, z \in \partial U$, one can find a unique $\psi^{A,U}_{y,z}$ with absolute value at most $c_2/K$ such that
\begin{equation}\label{1.21}
\begin{array}{l}
P_z[H_A < \wt{H}_{\partial U}, X_{H_A} = y] =  P_y[T_U < \wt{H}_A, X_{T_U} = z] = \\
 e_A(y) P_0[X_{T_U} = z] (1 + \psi^{A,U}_{y,z}),\; 
\mbox{and $\psi^{A,U}_{y,z} = 0$ if $e_A(y) \,P_y[X_{T_U} = z] = 0$}.
\end{array}
\end{equation}
\end{lemma}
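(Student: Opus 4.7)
The plan is to exploit reversibility of simple random walk on $\IZ^d$ with respect to counting measure (each discrete step uniform on the $2d$ neighbours, with i.i.d.\ $\mathrm{Exp}(1)$ holding times). Each finite nearest-neighbour path from $y$ to $z$ staying in $U$ except at the endpoint and avoiding $A$ at intermediate times pairs with its time-reverse, which contributes to $P_z[H_A < \wt H_{\partial U}, X_{H_A} = y]$ with identical weight; summing and integrating out the holding times yields the first equality. For the degenerate cases: if every neighbour of $y$ lies in $A$ then $e_A(y) = 0$ and the walk from $y$ enters $A$ at its first jump, so both probabilities vanish and $\psi^{A,U}_{y,z}$ is set to $0$; the case $P_y[X_{T_U} = z] = 0$ is treated analogously. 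Otherwise $\psi^{A,U}_{y,z}$ is uniquely determined by the displayed formula, and the task reduces to proving $|\psi^{A,U}_{y,z}| \le c_2/K$.

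\textbf{Step 2 (Main estimate).} The key input will be Lawler's hitting asymptotic (p.~50 of \cite{Lawl91}): for $A \subseteq B(0, L)$ and $v \in \IZ^d$ with $|v|_\infty \ge 2L$,
\[
P_v[X_{H_A} = y,\, H_A < \infty] = g(v)\, e_A(y)\,\bigl(1 + O(L/|v|)\bigr), \quad g(v) := g(v, 0),
\]
obtained from the last-step identity $\frac{1}{2d}\sum_{w \sim y,\, w \notin A} g_{A^c}(v, w)$, reversibility of $g_{A^c}$, and (\ref{1.4}), (\ref{1.10}). Starting from $P_z[H_A < \wt H_{\partial U}, X_{H_A} = y]$ (the form provided by Step 1), I would decompose by strong Markov at $\wt H_{\partial U}$,
\[
P_z[H_A < \wt H_{\partial U}, X_{H_A} = y] = P_z[X_{H_A} = y,\, H_A < \infty] - E_z\bigl[\wt H_{\partial U} < H_A;\, P_{X_{\wt H_{\partial U}}}[X_{H_A} = y,\, H_A < \infty]\bigr],
\]
apply the Lawler asymptotic at both $v = z$ and $v = X_{\wt H_{\partial U}} \in \partial U$ (each at distance $\ge KL$ from $A$, so with error $O(1/K)$), and factor out $e_A(y)$ to get
\[
P_z[H_A < \wt H_{\partial U}, X_{H_A} = y] = e_A(y)\Bigl[g(z) - E_z[\wt H_{\partial U} < H_A;\, g(X_{\wt H_{\partial U}})]\Bigr]\bigl(1 + O(1/K)\bigr).
\]
Finally, I would identify the bracket with $P_0[X_{T_U} = z]$ up to relative error $O(1/K^{d-2})$: by reversibility $P_0[X_{T_U} = z] = g^\ast_U(z, 0)$, where $g^\ast_U$ denotes the Green function of the walk from $z$ killed at $\wt H_{\partial U}$; (\ref{1.5}) applied to this killed walk then gives $g(z) = g^\ast_U(z, 0) + E_z[\wt H_{\partial U} < \infty;\, g(X_{\wt H_{\partial U}})]$; and the discrepancy between the cut-offs $\wt H_{\partial U} < H_A$ and $\wt H_{\partial U} < \infty$ is at most $P_z[H_A < \infty] \cdot \max_{z'\in \partial U} g(z', 0) = O(\mathrm{cap}(A)/(KL)^{d-2}) \cdot O((KL)^{2-d}) = O(P_0[X_{T_U}=z]/K^{d-2})$ by (\ref{1.4}), (\ref{1.8}), (\ref{1.10}). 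Combining everything yields $P_z[H_A < \wt H_{\partial U}, X_{H_A} = y] = e_A(y) P_0[X_{T_U} = z](1 + O(1/K))$, whence $|\psi^{A,U}_{y,z}| \le c_2/K$.

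\textbf{Main obstacle.} The hard part will be establishing Lawler's hitting asymptotic with a genuinely multiplicative $O(L/|v|)$ error, uniformly in $y \in A$ and $A \subseteq B(0, L)$: this requires the careful last-visit decomposition on p.~50 of \cite{Lawl91}, exploiting the precise Green-function expansion (\ref{1.4}) to control $g(v, y) - g(v, 0) = O(g(v)\, L/|v|)$ for $y \in A$. A secondary technical point is to propagate the errors cleanly through the subtraction in Step 2, using in particular the cancellation between $g(z)$ and $E_z[\wt H_{\partial U} < \infty;\, g(X_{\wt H_{\partial U}})]$ built into the Green-function identity (\ref{1.5}); given these inputs, all remaining manipulations are routine applications of the strong Markov property.
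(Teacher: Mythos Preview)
Your Step 1 and the degenerate-case discussion are fine. The real problem is in Step 2: the subtraction destroys the multiplicative error control you need.

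When you write
\[
P_z[H_A < \wt H_{\partial U}, X_{H_A} = y] \;=\; e_A(y)\Bigl[g(z) - E_z\bigl[\wt H_{\partial U} < H_A;\, g(X_{\wt H_{\partial U}})\bigr]\Bigr]\bigl(1 + O(1/K)\bigr),
\]
you are subtracting two terms each of order $e_A(y)\,g(z)$, each carrying its own relative error $O(1/K)$ from Lawler's asymptotic. But the target is of order $e_A(y)\,P_0[X_{T_U}=z]$, and the ratio $g(z)/P_0[X_{T_U}=z]$ is \emph{not} bounded. Already for $U = B(0,KL)$ one has $g(z) \asymp (KL)^{2-d}$ while $P_0[X_{T_U}=z] \asymp (KL)^{1-d}$, so the ratio is of order $KL$. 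Hence the absolute error surviving the subtraction is of order $e_A(y)\,g(z)/K \asymp L \cdot e_A(y)\,P_0[X_{T_U}=z]$, i.e.\ $L$ times the target rather than $O(1/K)$ times it. The identity (\ref{1.5}) gives exact cancellation in the bracket once you replace the cutoff $< H_A$ by $< \infty$, but it says nothing about the separate Lawler errors attached to the two terms, which have no reason to match. Your final displayed discrepancy bound has the same flaw: the implicit claim $(KL)^{2-d} \le c\,P_0[X_{T_U}=z]$ is false for the same numerics.

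The paper sidesteps all of this by working with $h(x) = P_x[X_{T_U}=z]$ directly. This function is positive harmonic in $U \supseteq B(0,KL)$, so the Harnack inequality together with a gradient estimate give $h(x) = h(0)\bigl(1+O(1/K)\bigr)$ for $x \in B(0,2L)$ --- a genuinely multiplicative bound, with no subtraction and no lower bound on $h(0)$ required. One then applies the strong Markov property at $T_{B(0,2L)}$ starting from $y$, factors out $h(0)=P_0[X_{T_U}=z]$, and is left with $P_y[T_U < \wt H_A] = e_{A,U}(y)$, which Lemma \ref{lem1.2} identifies with $e_A(y)\bigl(1+O(K^{2-d})\bigr)$.
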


The next proposition will be used in Section 3 in the proof of Proposition \ref{prop3.1}. It provides a bound on the difference of the equilibrium measure of a set $B$, when restricted to a well-separated piece $A$, with the equilibrium measure of the well-separated piece.

\begin{proposition}\label{prop1.4} (see Lemma \ref{lem1.3} for notation)

\medskip
If $L \ge 1$, $K \ge c_1$, then for any $A \subseteq B(0,L)$ and finite $B$ such that $B \cap B(0,KL) = A$, one has for all $y \in A$, setting $U = B(0,KL)$
\begin{equation}\label{1.22}
0 \le e_A(y) - e_B(y) = e_A(y) \,E_0[(1+ \psi^{A,U}_{y,X_{T_U}}), \;H_{B \backslash A} \circ \theta_{T_U} < \infty = H_A \circ \theta_{T_U}]
\end{equation}
and the term after $1$ in the parenthesis is a.s. smaller in absolute value than $c_2/K$.
\end{proposition}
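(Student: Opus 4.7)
The plan is to reduce to Lemma \ref{lem1.3} via a strong Markov decomposition at the exit time $T_U$ from $U = B(0,KL)$. Since $y \in A \subseteq B$, one has $e_A(y) = P_y[\wt{H}_A = \infty]$ and $e_B(y) = P_y[\wt{H}_B = \infty]$, and the inclusion $A \subseteq B$ yields $\wt{H}_B \le \wt{H}_A$, whence
\[
e_A(y) - e_B(y) = P_y[\wt{H}_A = \infty,\, \wt{H}_B < \infty] \ge 0,
\]
which already settles the leftmost inequality in (\ref{1.22}).

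Next, I would localize this event relative to $U$. On $\{\wt{H}_A = \infty\}$ the walk avoids $A$ after $\zeta_1$, so $\wt{H}_B < \infty$ forces $\wt{H}_{B \b A} < \infty$. The hypothesis $B \cap U = A$ gives $B \b A \subseteq \IZ^d \b U$; since $y \in U$, the walk must first leave $U$ before hitting $B\b A$, so that $T_U < \wt{H}_{B \b A}$. The event can therefore be rewritten as $\{T_U < \wt{H}_A\} \cap \{H_A \circ \theta_{T_U} = \infty\} \cap \{H_{B \b A} \circ \theta_{T_U} < \infty\}$. Applying the strong Markov property at $T_U$ and disintegrating over $z = X_{T_U} \in \partial U$ then gives
\[
e_A(y) - e_B(y) = \sum_{z \in \partial U} P_y[T_U < \wt{H}_A, X_{T_U} = z]\; P_z[H_A = \infty,\, H_{B \b A} < \infty].
\]

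The setup $A \subseteq B(0,L)$, $U = B(0,KL)$, $K \ge c_1$ is exactly that of Lemma \ref{lem1.3}, which rewrites each factor $P_y[T_U < \wt{H}_A, X_{T_U} = z]$ as $e_A(y)\, P_0[X_{T_U} = z]\, (1 + \psi^{A,U}_{y,z})$ with $|\psi^{A,U}_{y,z}| \le c_2/K$. Pulling out $e_A(y)$, recognizing the resulting sum as an $E_0$-expectation, and then reapplying the strong Markov property under $P_0$ (the factor $1 + \psi^{A,U}_{y,X_{T_U}}$ being $\sigma(X_{T_U})$-measurable) to convert $P_{X_{T_U}}[H_A = \infty, H_{B \b A} < \infty]$ into the indicator of $\{H_{B \b A} \circ \theta_{T_U} < \infty = H_A \circ \theta_{T_U}\}$, produces (\ref{1.22}) together with the claimed $c_2/K$ bound on the correction term.

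I do not anticipate a serious obstacle: once Lemma \ref{lem1.3} is granted, the proof is essentially bookkeeping with the strong Markov property. The only slightly delicate point is the event identity in the second paragraph --- specifically, that $B \cap U = A$ really pins the walk's first visit to $B\b A$ to lie strictly after $T_U$, so that the post-$T_U$ trajectory can be treated independently of what happened before.
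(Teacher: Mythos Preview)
Your proof is correct and follows essentially the same route as the paper's: both compute $e_A(y)-e_B(y)=P_y[\wt{H}_A=\infty>H_{B\backslash A}]$, apply the strong Markov property at $T_U$, invoke Lemma~\ref{lem1.3} to factor out $e_A(y)\,P_0[X_{T_U}=z](1+\psi^{A,U}_{y,z})$, and reassemble via strong Markov under $P_0$. Your explicit justification that $B\backslash A\subseteq\IZ^d\backslash U$ forces $T_U\le H_{B\backslash A}$ is a point the paper leaves implicit, but otherwise the arguments are identical.
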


\begin{proof}
For $y \in A$ we have 
\begin{equation}\label{1.23}
e_A(y) - e_B(y) = P_y[\wt{H}_A = \infty] - P_y[\wt{H}_B = \infty] = P_y[\wt{H}_A = \infty > H_{B \backslash A}]
\end{equation}
and the left-hand inequality of (\ref{1.22}) follows. Moreover, the last term above equals
\begin{equation}\label{1.24}
\begin{array}{l}
P_y[\wt{H}_A = \infty > H_{B \backslash A}] \stackrel{\rm strong \;Markov}{=} E_y\big[\wt{H}_A > T_U, P_{X_{T_U}}[H_{B \backslash A} < \infty = H_A]\big]
\\[1ex]
\dsl_{z \in \partial U} P_y[T_U < \wt{H}_A, X_{T_U} = z] \,P_z [H_{B \backslash A} < \infty = H_A] \stackrel{(\ref{1.21})}{=}
\\[1ex]
\dsl_{z \in \partial U} e_A(y) (1 + \psi^{A,U}_{y,z}) \,P_0[X_{T_U} = z] \,P_z[H_{B \backslash A} < \infty = H_A] \stackrel{\rm strong \;Markov}{=} 
\\[1ex]
e_A(y) \,E_0 [(1 + \psi^{A,U}_{y,X_{T_U}}), \; H_{B \backslash A} \circ \theta_{T_U} < \infty = H_A \circ \theta_{T_U}],
\end{array}
\end{equation}
where $|\psi^{A,U}_{y,X_{T_U}}| \le c_2/K$, $P_0$-a.s. by (\ref{1.21}). This proves the claim (\ref{1.22}).
\end{proof}

The next proposition will be important in the proof of Proposition \ref{prop3.1} and in Section 4, when setting up, by soft local time techniques, a coupling of excursions inside the interlacements, with a collection of independent excursions (see Proposition \ref{prop4.1}). It pertains to the entrance distribution of the walk starting from afar and conditioned to enter a set $B$ through a well-separated piece $A$.

\begin{proposition}\label{prop1.5} $(0 < \delta < 1)$

\medskip
If $L \ge 1$ and $K \ge c_3(\delta) \ge 2$, then for any non-empty $A \subseteq B(0,L)$ and finite $B$ such that $B \cap B(0,KL) = A$, and $\IZ^d \backslash B$ is connected, one has for $y \in A$ and $x \in \IZ^d \backslash (B \cup B(0,KL))$
\begin{align}
\big(1 - \mbox{\f $\dis\frac{\delta}{10}$}\big) \,\ov{e}_A(y) & \le P_x [X_{H_B} = y|H_B < \infty, X_{H_B} \in A] \le \big(1 + \mbox{\f $\dis\frac{\delta}{10}$}\big) \,\ov{e}_A(y), \label{1.25}
\\[2ex]
\big(1 - \mbox{\f $\dis\frac{\delta}{10}$}\big) \,\ov{e}_A(y) & \le  \mbox{\f $\dis\frac{e_B(y)}{e_B(A)}$} \le \big(1 + \mbox{\f $\dis\frac{\delta}{10}$}\big)\,\ov{e}_A(y) \quad \big(\mbox{with} \; e_B(A) = \dsl_{z \in A} e_B(z)\big). \label{1.26}
\end{align}
\end{proposition}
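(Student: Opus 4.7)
Both inequalities follow a common two-step template. First, decompose the quantities of interest so as to isolate factors of the form $P_z[H_A < \wt{H}_{\partial U}, X_{H_A} = y]$ or $P_y[T_U < \wt{H}_A, X_{T_U} = z]$, with $y \in A$ and $z \in \partial U$. Second, invoke Lemma~\ref{lem1.3} (legitimate once $c_3(\delta) \ge c_1$) to replace each such factor by $e_A(y)\,P_0[X_{T_U} = z](1 + \psi^{A,U}_{y,z})$ with $|\psi^{A,U}_{y,z}| \le c_2/K$, so that the $y$-dependence separates cleanly from the $z$-dependence. Summing over $y' \in A$ and taking the appropriate ratio then cancels the common $z$-sum, leaving $\ov{e}_A(y)$ multiplied by a factor of the form $(1 \pm c_2/K)/(1 \mp c_2/K)$, which is within $1 \pm \delta/10$ provided $c_3(\delta)$ is chosen sufficiently large.

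For (\ref{1.25}), note that on $\{X_{H_B} = y, H_B < \infty\}$ the equality $H_B = H_A$ holds since $y \in A \subseteq B$. I would partition this event by the index $j^*$ of the last visit of the walk to $\partial U$ strictly before $H_A$; since $x \notin U$ and $A \subseteq B(0,L) \subset U = B(0,KL)$, $j^*$ is well-defined and finite on the event. The key geometric remark is that exiting $U$ from an interior point requires a step into $\partial U$, so between the $j^*$-th visit to $\partial U$ and time $H_A$ the walk stays inside $U$, and in particular does not hit $B \setminus A \subseteq \IZ^d \setminus U$ on that interval. Consequently the condition $\{H_A < H_{B \setminus A}\}$ reduces on $\{j^* = j\}$ to the $\cF_{\sigma_j}$-measurable condition $\{\sigma_j < H_{B \setminus A}\}$, where $\sigma_j$ denotes the $j$-th visit of the walk to $\partial U$. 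Applying the strong Markov property at each $\sigma_j$ and summing over $j$ then produces a representation of the form $\sum_{z \in \partial U} Q_x(z)\, P_z[H_A < \wt{H}_{\partial U}, X_{H_A} = y]$ with a $y$-independent weight $Q_x(z) = \sum_j P_x[\sigma_j < H_A \wedge H_{B \setminus A}, X_{\sigma_j} = z]$, and Lemma~\ref{lem1.3} finishes the argument as described above.

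For (\ref{1.26}), the decomposition is simpler: starting from $y \in A \subseteq U$, the equality $B \cap U = A$ gives $\{\wt{H}_B = \infty\} = \{\wt{H}_A = \infty\} \cap \{H_{B \setminus A} = \infty\}$, and since $B \setminus A \subseteq \IZ^d \setminus U$ the walk must exit $U$ before it can reach $B \setminus A$. Strong Markov at $T_U$ therefore writes $e_B(y)$ as $\sum_{z \in \partial U} P_y[T_U < \wt{H}_A, X_{T_U} = z]\, P_z[H_B = \infty]$, and Lemma~\ref{lem1.3} converts this into $e_A(y)$ times a weighted $z$-sum that is entirely independent of $y$. Summing over $y' \in A$ and taking the quotient with $e_B(A)$ yields (\ref{1.26}) with the same choice of $c_3(\delta)$.

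The main delicate point is the bookkeeping for the last-visit decomposition in (\ref{1.25}): one must verify that the partition by $j^*$ is disjoint and exhaustive, that the pre-$\sigma_j$ conditions are $\cF_{\sigma_j}$-measurable, and that no contribution is lost through excursions that exit $U$ between $\sigma_j$ and $H_A$. The one-step geometric fact highlighted above---that a walk cannot leave $U$ without stepping onto $\partial U$---is precisely what makes these verifications go through cleanly and reduces everything to the uniform estimate provided by Lemma~\ref{lem1.3}.
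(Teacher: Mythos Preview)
Your argument for (\ref{1.25}) is essentially the same as the paper's: your last-visit decomposition with weight $Q_x(z)=\sum_j P_x[\sigma_j<H_B,\,X_{\sigma_j}=z]$ is exactly the discrete-skeleton computation behind the paper's Lemma~\ref{lem1.6}, where the same quantity is written as the killed Green function $g_{\wt U}(x,z)$ with $\wt U=B^c$; after that, both proofs feed the factor $P_z[H_A<\wt H_{\partial U},X_{H_A}=y]$ into Lemma~\ref{lem1.3} and take the ratio.

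For (\ref{1.26}) you take a genuinely different route. The paper deduces (\ref{1.26}) from (\ref{1.25}) by letting $x\to\infty$ and invoking the classical fact (Lawler, Theorem~2.1.3) that $P_x[X_{H_B}=y\mid H_B<\infty]\to\ov e_B(y)$, so the conditional probability in (\ref{1.25}) converges to $e_B(y)/e_B(A)$. You instead argue directly: apply the strong Markov property at $T_U$ to write $e_B(y)=\sum_{z\in\partial U}P_y[T_U<\wt H_A,X_{T_U}=z]\,P_z[H_B=\infty]$ (using $B\cap U=A$ and that $P_z[H_B=\infty]=0$ when $z\in B$ to absorb the case $X_{T_U}\in B\setminus A$), then invoke Lemma~\ref{lem1.3} on the first factor. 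Your approach is self-contained and avoids the external limit result; the paper's is shorter once (\ref{1.25}) is in hand. Both yield the same $\frac{1\pm c_2/K}{1\mp c_2/K}$ window, hence the same $c_3(\delta)$.
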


\begin{proof}
We first prove (\ref{1.25}) and start with a lemma.

\begin{lemma}\label{lem1.6}
For $A,B$ as above, and setting $U = B(0,KL)$, $\wt{U} = B^c$, for any $x \in \wt{U} \backslash U = \IZ^d \backslash (B \cup B(0,KL))$ and $y \in A$, one has
\begin{equation}\label{1.27}
P_x[H_B < \infty, X_{H_B} = y] = \dsl_{z \in \partial U} g_{\wt{U}} (x,z) \,P_z [H_A < \wt{H}_{\partial U}, X_{H_A} = y].
\end{equation}
\end{lemma}

\begin{proof}
The proof is similar to the proof of (2.3) on p.~48 of \cite{Lawl91}. The identity (\ref{1.27}) is obtained by considering the discrete skeleton of the walk and summing over the possible values of the time of the last visit to $\partial U$ before entering $B$ through $y$ in $A$, on the event in the left member of (\ref{1.27}).
\end{proof}

We now resume the proof of (\ref{1.25}) but keep the notation $U$ and $\wt{U}$ from Lemma \ref{lem1.6}. Since $\wt{U}$ is connected and $\partial A \subseteq \wt{U}$, it follows that for some $y$ in $A$ the left-hand side of (\ref{1.27}) is positive, and hence, $g_{\wt{U}}(x,\cdot)$ does not identically vanish on $\partial U$. As a result, the conditional probability in (\ref{1.25}) is well-defined, and for any $y \in A$ it equals
\begin{equation}\label{1.28}
\dis\frac{\dsl_{z \in \partial U} g_{\wt{U}}(x,z) \,P_z[H_A < \wt{H}_{\partial U}, X_{H_A} = y]}{\dsl_{z \in \partial U, y' \in A} g_{\wt{U}}(x,z) P_z[H_A < \wt{H}_{\partial U}, X_{H_A} = y']}\,.
\end{equation}

\n
When $K > c_1 \vee c_2$ (see Lemma \ref{lem1.3}), the probability in the numerator is at most $e_A(y)$ \,$P_0[X_{T_U} = z] (1 + c_2/K)$ and the sum over $y'$ of the probabilities in the denominator is at least ${\rm cap}(A) \,P_0[X_{T_U} = z] (1-c_2/K)$. Hence, the expression in (\ref{1.28}) is at most $\ov{e}_A(y) \,\frac{1 + c_2/K}{1 - c_2/K}$. In a similar fashion, we see that the expression in (\ref{1.28}) is at least $\frac{1 - c_2/K}{1 + c_2/K}$. The claim (\ref{1.25}) follows.

\medskip
We now turn to the proof of (\ref{1.26}). By Theorem 2.1.3, p.~51 of \cite{Lawl91}, see also (2.13), p.~52 of the same reference, one knows that when $x$ tends to infinity in (\ref{1.25}) $P_x[X_{H_B} = y|H_B < \infty]$ tends to $\ov{e}_B(y)$ for each $y$ in $B$. In particular, letting $x$ tend to infinity in (\ref{1.27}) (so that $x$ lies in $\wt{U} \backslash U = \IZ^d \backslash (B \cup B(0,KL))$ when $x$ is sufficiently large), we see that the conditional probability in (\ref{1.25}) tends to the ratio $\frac{\ov{e}_B(y)}{\ov{e}_B(A)} = \frac{e_B(y)}{e_B(A)}$, and the claim (\ref{1.26}) follows. This concludes the proof of Proposition \ref{prop1.5}.
\end{proof}

For the last result of this section concerning the continuous-time simple random walk, we consider $A \subseteq U$, with $U$ a box in $\IZ^d$, and the number of excursions of the walk from $A$ to the complement of $U$ (or as a shorthand, from $A$ to $\partial U$). Formally, we define
\begin{equation}\label{1.29}
R_1 = H_A \le D_1 \le R_2 \le \dots \le R_k \le D_k \le \dots \le \infty,
\end{equation}

\n
as the successive times of return to $A$ and departure from $U$ for $(X_t)_{t \ge 0}$, and note that $P_x$-a.s. on $\{R_k < \infty\}$, $D_k$ is finite, and each inequality is strict if the left member is finite $(x$ is an arbitrary point in $\IZ^d$). The number of excursions of the walk from $A$ to the complement of $U$ is defined as
\begin{equation}\label{1.30}
N_{A,U} = \sup\{k \ge 1; D_k < \infty\}.
\end{equation}

\begin{lemma}\label{lem1.7} ($A \subseteq U$, $U$ a box in $\IZ^d$)

\medskip
Set $p = \inf_{\partial U} P_y[H_A = \infty]$ as in (\ref{1.18}). Then, for $\lambda > 0$ such that $e^\lambda (1-p) < 1$, one has
\begin{equation}\label{1.31}
\sup\limits_{x \in A} E_x [e^{\lambda N_{A,U}}] \le \mbox{\f $\dis\frac{e^\lambda p}{1- e^\lambda (1-p)}$}\,.
\end{equation}
(we recall from (\ref{1.18}) that $p > 0 \vee (1 - c\; \frac{{\rm cap}(A)}{d(A,U^c)^{d-2}})$).
\end{lemma}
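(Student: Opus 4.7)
The plan is to exploit the geometric structure of the excursions: each excursion from $A$ to $\partial U$ is followed by a ``test'' for return to $A$, and the probability of return is controlled uniformly by $1 - p$ via the strong Markov property.

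First, I would note that for any $x \in A$ one has $R_1 = H_A = 0$, and since $U$ is finite and the walk drifts to infinity, $D_1 < \infty$ $P_x$-a.s., so $N_{A,U} \ge 1$. More generally, on $\{R_k < \infty\}$, the exit time $D_k$ is a.s.~finite and $X_{D_k} \in \partial U$. Applying the strong Markov property at $D_k$ yields, for every $k \ge 1$,
\begin{equation*}
P_x[R_{k+1} < \infty \mid \mathcal{F}_{D_k}] \,1\{D_k < \infty\} = P_{X_{D_k}}[H_A < \infty] \,1\{D_k < \infty\} \le (1 - p) \,1\{D_k < \infty\},
\end{equation*}
by the very definition of $p$ in (\ref{1.18}). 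Since $\{N_{A,U} \ge k+1\} = \{D_{k+1} < \infty\} \subseteq \{R_{k+1} < \infty\}$ and $\{N_{A,U} \ge k\} = \{D_k < \infty\}$, taking expectations gives $P_x[N_{A,U} \ge k+1] \le (1-p)\,P_x[N_{A,U} \ge k]$. Iterating, for all $k \ge 1$,
\begin{equation*}
P_x[N_{A,U} \ge k] \le (1-p)^{k-1}.
\end{equation*}

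Next I would turn this tail bound into a control of the exponential moment by the standard ``layer cake'' identity $e^{\lambda n} - 1 = (e^\lambda - 1) \sum_{k \ge 1} e^{\lambda(k-1)} 1\{n \ge k\}$ (valid for integers $n \ge 0$). Taking expectations under $P_x$ and inserting the tail bound,
\begin{equation*}
E_x[e^{\lambda N_{A,U}}] \le 1 + (e^\lambda - 1) \sum_{k \ge 1} e^{\lambda(k-1)} (1-p)^{k-1} = 1 + \frac{e^\lambda - 1}{1 - e^\lambda(1-p)},
\end{equation*}
where the geometric series converges thanks to the hypothesis $e^\lambda(1-p) < 1$. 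A direct algebraic simplification of the right-hand side yields $e^\lambda p / (1 - e^\lambda(1-p))$, which is (\ref{1.31}). Since the bound is uniform in $x \in A$, the supremum over $A$ satisfies the same estimate.

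There is no real obstacle here; the only mild point to be careful about is the uniformity of the one-step return estimate, which is precisely encoded in the definition of $p$ as an infimum over $\partial U$ and the fact that $X_{D_k} \in \partial U$ on $\{D_k < \infty\}$.
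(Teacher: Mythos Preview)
Your proof is correct. Both you and the paper rely on the same key observation: by the strong Markov property applied at the departure time from $U$, the probability of launching a further excursion is at most $1-p$, uniformly in the exit point. The paper packages this as a single recursive inequality on the generating function, writing $N_{A,U} = 1 + (N_{A,U} \circ \theta_{H_A} \,1\{H_A < \infty\}) \circ \theta_{T_U}$ $P_x$-a.s.\ for $x \in A$, which gives $\varphi \le e^\lambda\big((1-p)\varphi + p\big)$ for $\varphi = \sup_{x \in A} E_x[e^{\lambda N_{A,U}}]$, and then solves for $\varphi$. You instead unroll the recursion into the geometric tail bound $P_x[N_{A,U} \ge k] \le (1-p)^{k-1}$ and sum via layer cake. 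The two routes are essentially equivalent; yours has the minor advantage that finiteness of the exponential moment is established along the way rather than implicitly assumed when solving the recursion, while the paper's version is a touch shorter.
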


\begin{proof}
For simplicity we write $N$ in place of $N_{A,U}$ in this proof. Observe that for $x \in A$, $P_x$-a.s., $N = 1 + (N \circ \theta_{H_A} 1\{H_A < \infty\}) \circ \theta_{T_U}$.

\medskip
Hence, using the strong Markov property at time $T_U$ and then at time $H_A$, we obtain
\begin{equation}\label{1.32}
\begin{split}
E_x[e^{\lambda N}] & = E_x[e^\lambda (e^{\lambda N \circ \theta_{H_A} 1\{H_A < \infty\}}) \circ \theta_{T_U}]
\\[1ex]
& = e^\lambda \,E_x\big[E_{X_{T_U}}[e^{\lambda N \circ \theta_{H_A}} 1 \{H_A < \infty\} + 1\{H_A = \infty\}]\big]
\\[1ex]
& = e^\lambda \,E_x\big[E_{X_{T_U}}[ 1 \{H_A < \infty\} \,E_{X_{H_A}}[e^{\lambda N}] + 1\{H_A = \infty\}]\big].
\end{split}
\end{equation}
Thus, setting $\varphi = \sup_{x \in A} E_x [e^{\lambda N}] ( \ge 1)$, we find that
\begin{equation}\label{1.33}
\varphi \le e^\lambda \big((1- p) \varphi + p\big).
\end{equation}
The claim (\ref{1.31}) readily follows.
\end{proof}

\medskip
We will now recall some facts concerning continuous-time random interlacements. We refer to \cite{Szni12b} for more details. In the notation from the beginning of this section, we define $\wh{W}^* = \wh{W}/\sim$, where for $w,w' \in\wh{W}$, $w \sim w'$ means that $w(\cdot) = w(\cdot +k)$ for some $k \in \IZ$. We also consider the canonical map $\pi^*$: $\wh{W} \rightarrow \wh{W}^*$, and for $A \subset \subset \IZ^d$ denote by $\wh{W}_A^*$ the subset of $\wh{W}^*$ of trajectories modulo time-shift that intersect $A$. For $w^* \in \wh{W}_A^*$, we denote by $w^*_{A,+}$ the unique element of $\wh{W}^+$ that follows $w^*$ step by step from the first time it enters $A$.

\medskip
The continuous-time random interlacements can be constructed as a Poisson point process on the space $\wh{W}^* \times \IR_+$, with intensity measure $\wh{\nu}(dw^*)du$, where $\wh{\nu}$ is a $\sigma$-finite measure on $\wh{W}^*$ such that its restriction to $\wh{W}^*_A$ (denoted by $\wh{\nu}_A$) is equal to $\pi^* \circ \wh{Q}_A$, where $\wh{Q}_A$ is a finite measure on $\wh{W}$ such that letting $(X_t)_{t \in \IR}$ stand for the continuous-time process attached to $w \in \wh{W}$ (see (1.7) in \cite{Szni12b}), then
\begin{equation}\label{1.34}
\wh{Q}_A[X_0 = x] = e_A(x), \;\mbox{for $x \in \IZ^d$},
\end{equation}
and when $e_A(x) > 0$,
\begin{equation}\label{1.35}
\begin{array}{l}
\mbox{under $\wh{Q}_A$  conditioned on $X_0 = x$, $(X_t)_{t \ge 0}$ and the right-continuous}\\
\mbox{regularization of $(X_{-t})_{t > 0}$ are independent, and respectively distributed}\\
\mbox{as $(X_t)_{t \ge 0}$ under $P_x$, and $X$ after its first jump under $P_x[\cdot \,|\wt{H}_A = \infty]$}.
\end{array}
\end{equation}

\medskip\n
The space $\Omega$ on which the Poisson point measure is defined can be conveniently chosen as
\begin{equation}\label{1.36}
\begin{split}
\Omega = \{ & \mbox{$\omega = \sum_{i \ge 0} \delta_{(w^*_i, u_i)}$;  with $w^*_i \in \wh{W}^*$ for each $i \ge 0$, and $u_i > 0$ pairwise}
\\[-0.5ex]
&\mbox{distinct, and so that $\o(\wh{W}^*_A \times [0,u]) < \infty$ and $\o(\wh{W}^*_A \times \IR_+) = \infty$,}
\\[-0.2ex]
&\mbox{for any non-empty $A \subset \subset \IZ^d$ and $ u \ge 0\}$}.
\end{split}
\end{equation}

\n
The space $\Omega$ is endowed with the canonical $\sigma$-algebra and we denote by $\IP$ the law on $\Omega$ under which $\omega$ is a Poisson point process of intensivity measure $\wh{\nu} \otimes du$.

\medskip
Given $\o \in \Omega$, $A \subset \subset \IZ^d$ and $u \ge 0$, we define the point measure $\mu_{A,u}(\o)$ on $\wh{W}_+$ collecting for the $w_i^*$ with label $u_i$ at most $u$ that enter $A$ in the cloud $\o$, the onward trajectories after the first entrance in $A$:
\begin{equation}\label{1.37}
\mu_{A,u}(\omega)= \dsl_{i \ge 0} 1\{w^*_i \in \wh{W}^*_A, u_i \le u\} \,\delta_{(w^*_i)_{A,+}}, \;\mbox{if} \; \o = \dsl_{i \ge 0} \delta_{w_i^*,u_i}.
\end{equation}

\n
The key property of these point-measures, is that for any$ A \subset \subset \IZ^d$, $u \ge 0$,
\begin{equation}\label{1.38}
\mbox{under $\IP, \mu_{A,u}$ is a Poisson point process on $\wh{W}_+$ with intensity measure $u\,P_{e_{A}}$}.
\end{equation}

\medskip
Then, given $\o = \sum_{i \ge 0} \delta_{(w^*_i,u_i)}$ in $\Omega$ and $u \ge 0$, the random interlacement at level $u$, and the vacant set at level $u$, are now defined as the random subsets of $\IZ^d$
\begin{equation}\label{1.39}
\cI^u(\o) = \textstyle \bigcup\limits_{i : u_i \le u} \,{\rm range}(w_i^*), \; \; \cV^u(\o) = \IZ^d \backslash \cI^u(\o),
\end{equation}

\n
where for $w^* \in  \wh{W}^*$, range$(w^*)$ stands for the set of points in $\IZ^d$ visited by any $w \in \wh{W}$ with $\pi^*(w) = w^*$.

\medskip
Another object of interest for us is $L_{x,u}(\o)$, the occupation-time at site $x$ and level $u$ of random interlacements, that is, the total time spent at $x$ by all trajectories $w_i^*$ with label $u_i \le u$ in the cloud $\o = \sum_{i \ge 0} \delta_{(w^*_i,u_i)} \in \Omega$. So, for $V$: $\IZ^d \rightarrow \IR$ supported on $A \subset \subset \IZ^d$, one has $\langle  \mu_{A,u} ,  \dil^\infty_0 V(X_s) ds \rangle = \sum_{x \in \IZ^d} V(x) \,L_{x,u}$. Moreover, $\IE[L_{x,u}] = u$, and one has the following formula for the Laplace transform of $(L_{x,u})_{x \in \IZ^d}$, see Theorem 2.1 of \cite{Szni12d}. Namely, letting $\| \cdot \|_\infty$ stand for the supremum norm and with $G$ as in (\ref{1.3}),
\begin{equation}\label{1.40}
\begin{array}{l}
\mbox{for any finitely supported $V$: $\IZ^d \rightarrow \IR$ such that $\|GV\|_\infty < 1$, and $u \ge 0$}\\
\IE\big[\exp\big\{\dsl_{x \in \IZ^d} V(x) \,L_{x,u}\big\}\big] = \exp\{ u \langle V, (I - GV)^{-1} 1\rangle\}.
\end{array}
\end{equation}

\n
One actually knows more: there is a variational formula for the logarithm of the Laplace transform, see Sections 2 and 4 of \cite{LiSzni15}, but (\ref{1.40}) will suffice for our present purpose.

\medskip
We now turn to the description of the excursions in the interlacements that will be of interest in the next sections. We consider a box $U$ in $\IZ^d$ (see the beginning of this section) and $A \subseteq U$ a non-empty set. By definition of $\Omega$, see (\ref{1.36}), we know that $w(\wh{W}^*_A \times [0,u]) < \infty$, for all $u \ge 0$, but $\omega(\wh{W}^*_A \times \IR_+) = \infty$. Moreover, the labels $u_i$ that appear in the point measure $\omega$ are all distinct, and each $w^*_i$ that belongs to $\wh{W}^*_A$ only contains finitely many excursions from $A$ to $\partial U$ (recall that the bilateral trajectory corresponding to the $\IZ^d$-valued coordinates of an element of $\wh{W}$ only spends finite time in any finite subset of $\IZ^d$).

\medskip
Thus, given $\o = \sum_{i \ge 0} \delta_{(w^*_i,u_i)}$ in $\Omega$, we can rank the infinite sequence of excursions from $A$ to $\partial U$ by lexicographical order, first by increasing size of $u_i$ such that $w_i^* \in \wh{W}^*_A$, and then by order of appearance inside a given trajectory $w^*_i \in \wh{W}_A^*$. In this fashion we obtain a sequence of $\Gamma(U)$-valued random variables on $\Omega$
\begin{equation}\label{1.41}
Z^{A,U}_\ell (\o), \; \ell \ge 1,
\end{equation}
which describes the ordered sequence of excursions from $A$ to $\partial U$ in $\o \in \Omega$. We will also be interested in the number of excursions from $A$ to $\partial U$ at level $u \ge 0$ in $\o$, namely:
\begin{equation}\label{1.42}
\begin{split}
N^{A,U}_u (\o) = & \;\mbox{the total number of excursions from $A$ to $\partial U$ in all $w_i^*$}
\\[-1ex]
 & \;\mbox{such that $u_i \le u$ and $w_i^* \in \wh{W}^*_A$, if $\o = \sum_{i \ge 0} \delta_{(w^*_i,u_i)}$},
 \\
 = &\; \langle\mu_{A,u}, N_{A,U}\rangle (\o)\;\;\mbox{(in the notation of (\ref{1.37}), (\ref{1.30}))}.
\end{split}
\end{equation}

\n
We close this section with some facts concerning the strongly non-percolative regime of $\cV^u$. Although these results are not of direct use in the present work, they provide an instructive context for the introduction of the critical value $\overline{u}$ at the beginning of the next section. Recall that $u_{**}$ has been defined in (\ref{0.2}), and that $0 < u_* \le u_{**} < \infty$ (see \cite{PopoTeix13} and \cite{Szni12a}). Further, one knows (see the above references) that
\begin{equation}\label{1.43}
\mbox{for $u > u_{**}$, $\IP[0 \stackrel{\cV^u}{\longleftrightarrow} \partial B_L] \le c(u)\,e^{-c'(u) L^{c''}}$, for $L \ge 1$}
\end{equation}

\n
(actually, when $d \ge 4$, one can choose $c'' = 1$, and when $d = 3$, $c'' = \frac{1}{2}$ or any value in $(0,1)$, see  \cite{PopoTeix13}). By a union bound, translation invariance, and (\ref{1.43}), one then sees that in the notation of (\ref{0.3}), for any $M > 1$ and $u > u_{**}$, $\IP[B_N \stackrel{\cV^u}{\longleftrightarrow} S_N] \underset{N}{\longrightarrow} 0$, and as a result 
\begin{equation}\label{1.44}
\lim\limits_N \IP[A_N] = 1, \; \mbox{when $u > u_{**}$}.
\end{equation}
On the other hand, in the percolative regime, when $u < u_*$, $\cV^u$ contains an infinite component (see (\ref{0.1})) and $\IP[B_N \stackrel{\cV^u}{\longleftrightarrow} S_N] \underset{N}{\longrightarrow} 1$, so that
\begin{equation}\label{1.45}
\lim\limits_N \IP[A_N] = 0, \; \mbox{when $u < u_*$}.
\end{equation}
In the next section we will introduce a critical value $\ov{u}$ and make precise what we mean by the strongly percolative regime of $\cV^u$.

\section{Good boxes and the strongly percolative regime}
\setcounter{equation}{0}

In this section we first introduce a parameter $\ov{u}$ that will pin down what we mean by the {\it strongly percolative regime} of the vacant set of random interlacements. We then define a system of boxes. The basic side-length in their construction is $L$, and it will later be chosen of order $(N \log N)^{\frac{1}{d-1}}$ in Section 5, with $N$ having the same interpretation as in (\ref{0.3}). Next, we introduce a notion of {\it good box}, which, in the present set-up, plays a similar role to the notion of $\psi$-good box from Section 5 of \cite{Szni}, see also Remark \ref{rem2.2} below. The main result in this section is Theorem \ref{theo2.3}. It shows that when the three parameters entering the definition of a good box are smaller than $\ov{u}$, a box is good except on an event with super-polynomially decaying probability in $L$.

\medskip
We begin with the definition of the critical parameter $\ov{u}$. Given $u > v > 0$, we say that the vacant set of random interlacements {\it strongly percolates at levels $u, v$,} when for $B = [0,L)^d$
\begin{align}
\lim\limits_L  \mbox{\f $\dis\frac{1}{\log L}$} \log \IP\,\big[& \mbox{$\cV^u \cap B$ has no connected component of diameter}, \label{2.1}
\\[-1.5ex]
&\mbox{at least $\frac{L}{10}\big] = - \infty$}, \nonumber
\intertext{and for $B' = L e + B$, with $|e| = 1$, and $D = [-3L, 4L)^d$,}
\lim\limits_L  \mbox{\f $\dis\frac{1}{\log L}$} \log \IP\,[ &\mbox{there exist connected components of $B \cap \cV^u$ and  $B' \cap \cV^u$ of}\label{2.2} 
\\[-1ex]
&\mbox{diameter at least $\frac{L}{10}$, which are not connected in $D \cap \cV^v] = - \infty$}.\nonumber
\end{align}
We then define the critical value
\begin{equation}\label{2.3}
\begin{split}
\ov{u} = \sup\{ &\mbox{$s > 0$; the vacant set of random interlacements strongly}
\\[-0.5ex]
&\mbox{percolates at levels $u,v$, whenever $u > v$ lie in $(0,s)\}$}.
\end{split}
\end{equation}

\n
We then refer to $0 < u < \ov{u}$, as the strongly percolative regime of the vacant set of random interlacements (as we will soon see, $0 < \ov{u} \le u_*$, so the definition is not vacuous, and pins down a subset of the percolative regime $0 < u < u_*$). We will refer to estimates as in (\ref{2.1}) or (\ref{2.2}), as super-polynomial decay in $L$ of the probabilities under consideration.

\begin{remark}\label{rem1.1} \rm ~

\medskip\n
1) Note that when the vacant set of random interlacements strongly percolates at levels $u > v  > 0$, then the probability that there exist two connected components of $\cV^u \cap B$ with diameter at least $\frac{L}{10}$, which are not connected in $D \cap \cV^v$, has super-polynomial decay in $L$, as a direct consequence of (\ref{2.1}), (\ref{2.2}) (and with $B, D$ as in (\ref{2.1}), (\ref{2.2})).

\bigskip\n
2) When the vacant set of random interlacements strongly percolates at levels $u > v > 0$, it follows from a union bound that $\IP[B(0,L) \stackrel{\cV^v}{\mbox{\large $\nleftrightarrow$}} \partial_i B(0,2L)]$ has super-polynomial decay in $L$ (recall from the beginning of Section 1 that $\partial_i B(0,2L)$ stands for the internal boundary of $B(0,2L)$). 

\medskip
Actually, with the help of 1), when $\ov{u} > u > v > w > 0$, we can patch up crossings in $\cV^v$ from $B(0,2^k)$ to $\partial_i B(0,2^{k+1})$ in $\cV^w$ for $k \ge k_0$ (note that a crossing from $B(0,2^k)$ to $\partial_i B(0,2^{k+1})$ has diameter at least $2^k$, and a crossing from $B(0,2^{k+1})$ to $\partial_i B(0,2^{k+2})$ has diameter at least $2^{k+1}$, so that both $2^k$ and $2^{k+1}$ exceed $\frac{L}{10}$, when $L = 2^{k+3} + 1$, the side-length of $B(0,2^{k+2})$), and find that $\cV^w$ percolates with positive probability. By ergodicity $\cV^w$ percolates with probability $1$, and one thus finds that
\begin{equation}\label{2.4}
\ov{u} \le u_* ( \le u_{**}).
\end{equation}

\n
3) By Theorem 1.1 of \cite{DrewRathSapo14a} (see also \cite{Teix11}, when $d \ge 5$), one knows that there are constants $c, c' > 0$, such that for $0 < u < c'$,
\begin{equation}\label{2.5}
\left\{ \begin{array}{ll}
\limsup\limits_n \;\mbox{\f $\dis\frac{1}{n^c}$}\, \log \IP\,[& \!\!\!\!\mbox{the infinite connected component of $\cV^u$ does not meet} 
\\[-1ex]
&\!\!\!\!\!\mbox{$B(0,n)] \le -1$, and}
\\[2ex]
\limsup\limits_n \;\mbox{\f $\dis\frac{1}{n^c}$} \,\log \IP\,[&\!\!\!\!\mbox{there exist two connected subsets of $\cV^u \cap B(0,n)$ with}  
\\[-1ex]
&\!\!\!\!\mbox{with diameter at least $\frac{n}{10}$, which are not connected in}  
\\
&\!\!\!\!\mbox{$\cV^u \cap B(0,2n)] \le -1$}.  
\end{array}\right.
\end{equation}

\medskip\n
It is then straightforward to see that for such an $u$ in $(0,c')$ when $v \in (0,u)$, the vacant set of random interlacements strongly percolates at levels $u,v$ (note that in the notation of (\ref{2.2}), when $e = e_i$, with $e_i$ a vector of the canonical basis $e_1,\dots,e_d$ of $\IR^d$, $L e + B(0,L) \supseteq B \cup B'$ and $Le + B(0,2L) \subseteq D$, whereas $B(0,L) \supseteq B \cup B'$ and $B(0,2L) \subseteq D$, when $e = -e_i$). We thus see that
\begin{equation}\label{2.6}
\ov{u} > 0.
\end{equation}

\n
It is of course a natural question whether all the above critical values actually coincide, that is, whether $\ov{u} = u_* = u_{**}$. \hfill $\square$
\end{remark}

\medskip
We now introduce a system of boxes that will play an important role in the subsequent analysis. We consider positive integers
\begin{equation}\label{2.7}
L \ge 1 \;\;\mbox{and} \; \; K \ge 100.
\end{equation}

\n
We will be interested in the regime where $L$ tends first to infinity, and we will later let $K$ become large. In fact, in Section 5, we will choose $L$ of order $(N \log N)^{\frac{1}{d-1}}$, see (\ref{5.1}), where $N$ has the same meaning as in (\ref{0.3}), and will let $N$ tend to infinity, and operate with large values of $K$. We introduce the lattice
\begin{equation}\label{2.8}
\IL= L \IZ^d
\end{equation}
and the boxes in $\IZ^d$
\begin{equation}\label{2.9}
\begin{split}
B_0 = &\; [0,L)^d \subseteq D_0 \subseteq [- 3L,4L)^d \subseteq \check{D}_0 = [-4L, 5L)^d 
 \\
\subseteq &\;U_0 = [-KL + 1, K L - 1)^d  \subseteq  \check{U}_0 = [-(K+1) L + 1, (K+1) L - 1)^d
\\
 \subseteq &\; \check{B}_0 = [-(K+1)L, (K+1)L)^d
\end{split}
\end{equation}

\n
as well as the translates of these boxes at the various sites of $\IL$:
\begin{equation}\label{2.10}
\begin{split}
B_z =  z +B_0 \subseteq  & \;D_z = z + D_0 \subseteq \check{D}_z = z + \check{D}_0 \subseteq U_z = z + U_0
\\
 \subseteq & \; \check{U}_z = z + \check{U}_0 \subseteq \check{B}_z = z + \check{B}_0.
\end{split}
\end{equation}

\medskip\n
This is quite similar to the construction in Section 4 of \cite{Szni}, see (\ref{4.3}), (\ref{4.4}), but here we introduced the additional boxes $\check{D}_z$ and $\check{U}_z$. These boxes will play an important role in Sections 4 and 5, in order to simultaneously handle excursions $Z_\ell^{D,U}$, $\ell \ge 1$ and $Z_\ell^{D',U'}$, $\ell \ge 1$, when $D = D_z$, $U = U_z$, and $D' = D_{z'}$, $U' = U_{z'}$, with $z,z'$ neighbors in $\IL$, and bring to bear soft local time techniques to construct couplings with i.i.d. excursions. This feature is related to the fact that, in the present context, we do not have at our disposal a simple decomposition, as in the case of the Gaussian free field, which could be written as the sum of a local field $\psi^z(\cdot)$ vanishing outside $U_z$ and an harmonic field $h^z(\cdot)$, with good independence properties of the local fields as $z$ varies, see for instance Lemma 4.1 and Lemma 5.3 of \cite{Szni}.

\medskip
We now come to the notion of good box alluded to at the beginning of the section. Very often, for convenience, we will refer to the boxes $B_z$, $z \in \IL$, as $L$-boxes, and write $B, D, \check{D}, U, \check{U}$ in place of $B_z, D_z, \check{D}_z, U_z, \check{U}_z$ with $z \in \IL$, when no confusion arises. We also write $Z^D_\ell, \ell \ge 1$, as a shorthand notation for $Z^{D,U}_\ell, \ell \ge 1$, i.e.~the successive excursions from $D$ to $\partial U$ in the interlacements, see (\ref{1.41}). Moreover, when $t \ge 1$ is a real number, we write $Z^D_t$ in place of $Z^D_\ell$, with $\ell = [t]$. By range$(Z^D_\ell)$ we mean the set of points in $\IZ^d$ visited by $Z^D_\ell$. Given an $L$-box $B$ and $\alpha > \beta > \gamma > 0$, we say that $B$ is {\it good at levels $\alpha, \beta, \gamma$}, or {\it good$(\alpha, \beta, \gamma)$}, when
\begin{align}
&\mbox{$B \,\backslash$(range $Z_1^D \cup \ldots \cup$ range $Z^D_{\alpha \,{\rm cap}(D)}$) contains a connected set with diameter}\label{2.11} 
\\[-0.5ex]
&\mbox{at least $\frac{L}{10}$ (and the set in parenthesis is empty when $\alpha\,{\rm cap}(D) < 1)$}, \nonumber
\\[2ex]
&\mbox{for any neighboring $L$-box $B' = L \,e + B$ of $B$ (i.e. $|e| = 1$), any two connected} \label{2.12}
\\[-0.5ex]
&\mbox{sets with with diameter at least $\frac{L}{10}$ in $B \backslash ($range $Z^D_1 \cup \ldots \cup$ range $Z^D_{\alpha\,{\rm cap}(D)}$) and} \nonumber
\\[-0.5ex]
&\mbox{$B' \backslash($range $Z^{D'}_1 \cup \ldots \cup$ range $Z^{D'}_{\alpha \,{\rm cap}(D')}$) are connected in} \nonumber
\\[-0.5ex]
&\mbox{$D \backslash($range $Z^{D}_1 \cup \ldots \cup$ range $Z^{D}_{\beta \,{\rm cap}(D)}$) (with a similar convention as in (\ref{2.11}))} \nonumber
\intertext{(note that ${\rm cap}(D) = {\rm cap}(D')$ by translation invariance),}
&\dsl_{1 \le \ell \le \beta\,{\rm cap}(D)} \dil^{T_U}_0 e_D \big(Z^D_\ell(s)\big)\,ds \ge \gamma \,{\rm cap}(D)\label{2.13} 
\end{align}

\medskip\n
(we recall that $e_D$ stands for the equilibrium measure of $D$, see (\ref{1.6})).

\medskip
When $B$ is not good at levels $\alpha, \beta, \gamma$, we say that it is {\it bad$(\alpha, \beta, \gamma)$}.

\begin{remark}\label{rem2.2} \rm Informally, (\ref{2.11}), (\ref{2.12}) play the role of conditions (5.7) and (5.8) of \cite{Szni}, for the definition of $\psi$-good box at levels $\alpha, \beta$. In the context of \cite{Szni} the excursion sets $\{\psi^B \ge \alpha\} \cap B$, $\{\psi^{B'} \ge \alpha\} \cap B$, and $\{\psi^B \ge \beta\} \cap D$, which involve the local fields $\psi^B$ and $\psi^{B'}$, are now replaced in the present set-up by $B \backslash ({\rm range}\, Z^D_1 \cup \ldots \cup {\rm range}\,Z^D_{\alpha\,{\rm cap} (D)}$), $B' \backslash ({\rm range}\, Z^D_1 \cup \ldots \cup \,{\rm range}\,Z^{D'}_{\alpha\,{\rm cap} (D')}$), and $D \backslash ({\rm range}\, Z^D_1 \cup \ldots \cup\, {\rm range}\,Z^D_{\beta\,{\rm cap} (D)}$), and involve the excursions $Z^D_\ell$ and $Z^{D'}_{\ell'}$, with $\ell \le \alpha\, {\rm cap}(D)$ (or $\ell \le \beta \,{\rm cap}(D)$) and $\ell' \le \alpha \,{\rm cap}(D')$. The condition (\ref{2.13}) has no equivalent in \cite{Szni}, and will have an important consequence in Section 3, where the controls of Theorem  \ref{theo3.2} play a similar role to Corollary 4.4 of \cite{Szni}. 

\hfill $\square$
\end{remark}

\medskip
We introduce one more notation. Given $z \in \IL$, $D = D_z$, $U = U_z$ and $u \ge 0$, we write the number of excursions from $D$ to $\partial U$ in the interlacement at level $u$ as
\begin{equation}\label{2.14}
N_u(D) = N_u^{D,U} \quad \mbox{(see (\ref{1.42}))}.
\end{equation}

\n
We are now ready to state and prove the main result of this section. It plays an analogous role in the present work to Proposition 5.2 of \cite{Szni}.

\begin{theorem}\label{theo2.3} (super-polynomial decay of being bad at levels below $\ov{u}$)

\medskip
For any $\alpha > \beta > \gamma$ in $(0,\ov{u})$ and $K \ge c_4(\alpha,\beta,\gamma) (\ge 100)$, one has
\begin{equation}\label{2.15}
\lim\limits_L \;\mbox{\f $\dis\frac{1}{\log L}$} \log \IP[B\;\mbox{is bad$(\alpha, \beta, \gamma)] = - \infty$}
\end{equation}
(the probability in (\ref{2.15}) does not depend on the specific choice of the $L$-box $B$).
\end{theorem}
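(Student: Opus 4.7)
The plan is to decompose $\{B \text{ is bad}(\alpha,\beta,\gamma)\}$ into the failures of (\ref{2.11}), (\ref{2.12}), (\ref{2.13}) and to bound each by an expression of the form $\exp(-c\,{\rm cap}(D))$, which is super-polynomial in $L$ since ${\rm cap}(D) \ge c L^{d-2}$ by (\ref{1.8}). The three probabilities clearly do not depend on the specific $L$-box by translation invariance.

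For (\ref{2.11}) and (\ref{2.12}), I would sandwich ``the first $k$ excursions'' between vacant sets of random interlacements at nearby levels and then invoke the strong percolation built into the definition (\ref{2.3}) of $\overline u$. The basic observation is that the union of the ranges of $Z^D_1, \ldots, Z^D_{N_u(D)}$ equals $\cI^u \cap U$, so controlling $N_u(D)$ translates excursion counts into interlacement levels. Using $\alpha > \beta$ both in $(0,\overline u)$, pick auxiliary levels $\alpha_1 \in (\alpha,\overline u)$, $\alpha_2 \in (\beta,\alpha)$, $\beta_2 \in (\beta,\alpha_2)$, all in $(0,\overline u)$. Feeding the exponential moments of $N_{D,U}$ from Lemma \ref{lem1.7} (where $p$ is close to $1$ for $K$ large, by (\ref{1.18})) into the Laplace functional of the Poisson process $\mu_{D,u}$ yields
\[
\IP[|N_u(D) - u\,{\rm cap}(D)| > \delta\,{\rm cap}(D)] \le \exp(-c(\delta,u)\,{\rm cap}(D))
\]
once $K$ is large enough in terms of $\delta,u$. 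On the intersection of the events $\{N_{\alpha_1}(D) \ge \alpha\,{\rm cap}(D)\}$, $\{N_{\alpha_2}(D) \le \alpha\,{\rm cap}(D)\}$, $\{N_{\beta_2}(D) \ge \beta\,{\rm cap}(D)\}$ (and the analogues for $D' = D_{z'}$), one has the sandwich
\[
\cV^{\alpha_1} \cap B \; \subseteq \; B \setminus \bigcup_{\ell \le \alpha\,{\rm cap}(D)} {\rm range}(Z^D_\ell) \; \subseteq \; \cV^{\alpha_2} \cap B,
\]
\[
D \setminus \bigcup_{\ell \le \beta\,{\rm cap}(D)} {\rm range}(Z^D_\ell) \; \supseteq \; \cV^{\beta_2} \cap D,
\]
together with the analogous inclusions for $B'$. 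Property (\ref{2.1}) at level $\alpha_1$ then handles (\ref{2.11}), and (\ref{2.2}) at the pair $\alpha_2 > \beta_2$ handles (\ref{2.12}), after a union bound over the $2d$ neighbors $B'$ of $B$.

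For (\ref{2.13}), since $e_D$ is supported in $D \subseteq U$ and the excursions $Z^D_1, \ldots, Z^D_{N_u(D)}$ collectively account for every visit of $\cI^u$ to $U$, one has the identity
\[
\sum_{\ell \le N_u(D)} \int_0^{T_U} e_D(Z^D_\ell(s))\,ds = \sum_{x \in \IZ^d} e_D(x)\, L_{x,u}.
\]
Pick $\gamma' \in (\gamma,\beta)$; on the super-polynomially likely event $\{N_{\gamma'}(D) \le \beta\,{\rm cap}(D)\}$ the sum in (\ref{2.13}) dominates $\sum_x e_D(x)\, L_{x,\gamma'}$. To control the latter, apply (\ref{1.40}) with $V = \lambda e_D$: by (\ref{1.10}), $(Ge_D)(x) = P_x[H_D<\infty]$, which equals $1$ on the support of $e_D$, so iteration yields $(GV)^n 1(x) = \lambda^n P_x[H_D<\infty]$ for $n \ge 1$, whence
\[
\IE\Big[\exp\big\{\lambda \sum_x e_D(x)\, L_{x,u}\big\}\Big] = \exp\Big\{u\,{\rm cap}(D)\,\frac{\lambda}{1-\lambda}\Big\}, \qquad |\lambda|<1.
\]
Optimizing the Chernoff bound over $\lambda<0$ produces the clean rate
\[
\IP\Big[\sum_x e_D(x)\, L_{x,\gamma'} \le \gamma\,{\rm cap}(D)\Big] \le \exp\{-(\sqrt{\gamma'}-\sqrt{\gamma})^2\,{\rm cap}(D)\},
\]
which together with the bound on $N_{\gamma'}(D)$ completes the control of (\ref{2.13}).

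The main obstacle is the book-keeping: the auxiliary levels $\alpha_1,\alpha_2,\beta_2,\gamma'$ must be chosen so that $N_u(D)$ is bounded in the correct direction for each of the three conditions, so that $\alpha_2 > \beta_2$ (enabling strong percolation between $B$ and $D$), and so that all levels lie in $(0,\overline u)$; the strict inequalities $\alpha > \beta > \gamma$ in $(0,\overline u)$ assumed in the theorem leave exactly enough room, and a union bound over the three failure events then yields (\ref{2.15}).
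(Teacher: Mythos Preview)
Your proposal is correct and follows essentially the same route as the paper: decompose into the failures of (\ref{2.11}), (\ref{2.12}), (\ref{2.13}), sandwich the excursion-defined vacant sets between $\cV^{u}$ at auxiliary levels via two-sided concentration of $N_u(D)$ (Lemma \ref{lem1.7} fed into the Poisson functional of $\mu_{D,u}$), and invoke (\ref{2.1}), (\ref{2.2}). The only notable variation is in the treatment of (\ref{2.13}): the paper reduces to $\langle \mu_{D,u_2},\int_0^\infty e_D(X_s)\,ds\rangle$ and appeals to Lemma \ref{lem1.1} (the exponential law of $\int_0^\infty e_D(X_s)\,ds$ under $P_{\ov e_D}$), whereas you compute the same Laplace transform directly from (\ref{1.40}) and then optimize the Chernoff bound to extract the explicit rate $(\sqrt{\gamma'}-\sqrt{\gamma})^2$; these are two presentations of the same calculation.
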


\begin{proof}
We first control the probability that (\ref{2.11}) does not hold. We pick $u_0,v_0$ so that 
\begin{equation}\label{2.16}
\ov{u} > u_0 > \alpha > v_0 > \beta .
\end{equation}

\n
We observe that when $N_{u_0}(D) \ge \alpha \,{\rm cap}(D)$, then $B \backslash ({\rm range} \, Z^D_1 \cup \dots \cup {\rm range}\,Z^D_{\alpha\,{\rm cap}(D)}$) contains $B \cap \cV^{u_0}$ and therefore
\begin{equation}\label{2.17}
\begin{split}
\IP[\mbox{(\ref{2.11}) does not hold}] \le &\;  \IP[N_{u_0}(D) < \alpha \,{\rm cap}(D)] \; +
\\
 &\; \IP[B \cap \cV^{u_0} \;\mbox{has no component of diameter $\ge \frac{L}{10}]$}.
\end{split}
\end{equation}

\n
By the choice of $u_0$ in (\ref{2.16}), and (\ref{2.1}), the last expression has super-polynomial decay in $L$. On the other hand, the number $N_{u_0}(D)$ of excursions from $D$ to $\partial U$ at level $u_0$ is at least $\mu_{D,u_0}(\wh{W}_+)$, a Poisson variable with parameter $u_0 \, {\rm cap}(D)$ (see (\ref{1.38})). So, for $\lambda > 0$, by the exponential Chebyshev inequality we have
\begin{equation}\label{2.18}
\begin{split}
\IP[N_{u_0}(D) \le \alpha \, {\rm cap}(D)] \le &\;  \IP[\langle \mu_{D,u_0},1\rangle \le \alpha\,{\rm cap}(D)]
\\
 \le &\;\exp\big\{- u_0 \,{\rm cap}(D) \big(1 - e^{-\lambda} - \mbox{\f $\dis\frac{\alpha}{u_0}$} \;\lambda\big)\big\}.
\end{split}
\end{equation}
Since $\frac{\alpha}{u_0} < 1$, we can pick $\lambda$ small and ensure that the expression in parenthesis in the last line is strictly positive. By (\ref{1.8}), ${\rm cap}(D) \ge c L^{d-2}$, and it follows from the above that
\begin{equation}\label{2.19}
\mbox{$\IP[$(\ref{2.11}) does not hold$]$ has super-polynomial decay in $L$}.
\end{equation}

\n
We now control the probability that (\ref{2.12}) fails. We pick $u_1,v_1$ such that
\begin{equation}\label{2.20}
\ov{u} > \alpha > u_1 > v_1 > \beta, \;\;\mbox{for instance $u_1 = \mbox{\f $\dis\frac{3 \alpha + \beta}{4}$}, \;\; v_1 = \mbox{\f $\dis\frac{\alpha + 3 \beta}{4}$}$}.
\end{equation}
Then, consider $B$ and $B'$ neighboring $L$-boxes. We note that when $N_{u_1}(D) \le \alpha \,{\rm cap}(D)$, $N_{u_1}(D') \le \alpha \,{\rm cap}(D)$ (recall that $N_{u_1}(D') = N_{u_1}^{D',U'}$ and ${\rm cap}(D') = {\rm cap}(D)$), and $N_{v_1}(D) \ge \beta \,{\rm cap}(D)$, then
\begin{equation*}
\begin{array}{l}
B \backslash ({\rm range} \,Z^D_1 \cup \ldots \cup {\rm range} \,Z^D_{\alpha \,{\rm cap}(D)}) \subseteq B \cap \cV^{u_1},
\\[1ex]
B' \backslash ({\rm range} \,Z^{D'}_1 \cup \ldots \cup {\rm range} \,Z^{D'}_{\alpha \,{\rm cap}(D')}) \subseteq B \cap \cV^{u_1}, \; \mbox{and}
\\[1ex]
D \backslash ({\rm range} \,Z^D_1 \cup \ldots \cup {\rm range} \,Z^D_{\beta \,{\rm cap}(D)}) \supseteq D \cap \cV^{v_1}.
\end{array}
\end{equation*}
As a result we see that
\begin{equation}\label{2.21}
\begin{split}
\IP\,[ &\mbox{(\ref{2.12}) does not hold}] \le \IP[N_{u_1}(D) >\alpha \,{\rm cap}(D)] + \IP[N_{u_1}(D') > \alpha\,{\rm cap}(D)] 
\\[-0.5ex]
& + \IP[N_{v_1}(D) < \beta \,{\rm cap}(D)] + 
\\
 \IP\,[& \mbox{there exist connected sets  of $B \cap \cV^{u_1}$ and $B' \cap \cV^{u_1}$ of diameter $\ge \frac{L}{10}$,}
\\[-0.5ex]
& \mbox{which are not connected in $D \cap \cV^{v_1}]$}.
\end{split}
\end{equation}

\medskip\n
Inside the last probability we can replace ``connected sets'' by ``connected components'' without changing the event, since $\cV^{u_1} \subseteq \cV^{v_1}$. So, by (\ref{2.2}) the last term of (\ref{2.21}) has super-polynomial decay in $L$. By a similar argument as in (\ref{2.18}), since $v_1 > \beta$, the third probability in the right-hand side of (\ref{2.21}) also has super-polynomial decay in $L$. By translation invariance the first two probabilities in the right-hand side of (\ref{2.21}) are equal. We bound them as follows. For $\lambda > 0$, we have by the exponential Chebyshev inequality
\begin{equation}\label{2.22}
\begin{array}{ll}
\IP\,[N_{u_1}(D) > \alpha\,{\rm cap}(D)] &\!\!  \le \; \exp\{ - \lambda \,\alpha \,{\rm cap}(D)\}\; \IE[e^{\lambda N_{u_1}(D)}]
\\[1ex]
&\!\!\!\!\!\!\!\!\!\! \stackrel{(\ref{1.42}),(\ref{1.38})}{=} \exp\{{\rm cap}(D)(- \lambda \alpha + u_1 \,E_{\ov{e}_D} [e^{\lambda N_{D,U}}- 1])\},
\end{array}
\end{equation}
where we recall the notation of (\ref{1.30}).

\medskip
By Lemma \ref{lem1.7}, with $p$ as above (\ref{1.31}), we see that for $\lambda  < \wt{c}$, $K > c'$,
\begin{equation*}
E_{\ov{e}_D} [e^{\lambda N_{D,U}} - 1] \le \mbox{\f $\dis\frac{e^\lambda p}{1 - e^\lambda (1-p)}$} - 1 = \mbox{\f $\dis\frac{e^\lambda -1}{1 - e^\lambda (1-p)}$} \le \mbox{\f $\dis\frac{e^\lambda -1}{1 - e^\lambda \, cK^{2-d}}$} \;,
\end{equation*}
where we used the lower bound on $p$ in (\ref{1.31}), as well as the upper bound on ${\rm cap}(D)$ in (\ref{1.8}), and the lower bound $d(D,U^c) \ge c KL$ from (\ref{2.9}), (\ref{2.10}). For $K \ge c(\alpha, \beta)$ (recall that  $u_1 = \frac{3 \alpha + \beta}{4} < \alpha$), we see that
\begin{equation*}
(1 - e^{\wt{c}} cK^{2-d})^{-1} \le \fr \,\Big(\mbox{\f $\dis\frac{\alpha}{u_1}$} + 1\Big),
\end{equation*}
and choosing $\lambda > 0$ small enough, we have
\begin{equation*}
- \lambda \alpha + u_1 \,E_{\ov{e}_D} [e^{\lambda N_{D,U}} - 1] < 0.
\end{equation*}
Inserting the lower bound for ${\rm cap}(D)$ from (\ref{1.8}) in (\ref{2.22}) we see that
\begin{equation}\label{2.23}
\mbox{when $K \ge c(\alpha,\beta)$, $\IP[$(\ref{2.12}) does not hold$]$ has super-polynomial decay in $L$}.
\end{equation}
We finally bound the probability that (\ref{2.13}) fails. We now choose $u_2$ such that
\begin{equation}\label{2.24}
\beta > u_2 > \gamma, \;\mbox{for instance $u_2 = \mbox{\f $\dis\frac{\beta + \gamma}{2}$}$}\,.
\end{equation}
We observe that when $N_{u_2}(D) \le \beta \,{\rm cap}(D)$, then
\begin{equation}\label{2.25}
\begin{split}
\big\langle \mu_{D,u_2}(dw),  \dil^\infty_0  e_D\big(X_s(w)\big)ds \big\rangle & =  \dsl_{1 \le \ell \le N_{u_2}(D)} \,\dil^{T_U}_0 e_D \big(Z^D_\ell(s)\big)ds
\\
 & \le  \dsl_{1 \le \ell \le \beta \, {\rm cap}(D)} \,\dil^{T_U}_0 e_D \big(Z^D_\ell(s)\big)ds .
\end{split}
\end{equation}
It now follows that for $\lambda > 0$,
\begin{equation}\label{2.26}
\begin{split}
\IP[\mbox{(\ref{2.13}) does not hold}]  \le& \; \IP[N_{u_2}(D) > \beta \,{\rm cap}(D)] \;+
\\
& \;\IP\big[\big\langle \mu_{D,u_2}, \dil^\infty_0 e_D(X_s) ds\big\rangle < \gamma \,{\rm cap} (D)\big].
\end{split}
\end{equation}

\n
Since $u_2 = \frac{\beta + \gamma}{2} < \beta$, we see as below (\ref{2.22}) that for $K \ge c(\beta,\gamma)$, the first term on the right-hand side of (\ref{2.26}) has super-polynomial decay in $L$ (actually, even exponential decay at rate $L^{d-2}$). As for the last term of (\ref{2.26}), we see by (\ref{1.38}) and Lemma \ref{lem1.1}, that for $\lambda > 0$ it is smaller than 
\begin{equation*}
\exp\{{\rm cap}(D) (\lambda \gamma + u_2 \,E_{\ov{e}_D} [e^{-\lambda \int^\infty_0 e_D(X_s)ds} - 1])\} \stackrel{(\ref{1.13})}{=} \exp \Big\{{\rm cap}(D) \,\lambda \Big(\gamma - \mbox{\f $\dis\frac{u_2}{1 + \lambda}$}\Big)\Big\}.
\end{equation*}

\n
Since $u_2 = \frac{\beta + \gamma}{2} > \gamma$, we can choose $\lambda$ small so that the last expression in parenthesis is negative. Taking into account the lower bound on ${\rm cap}(D)$ from (\ref{1.8}) the term in the last line of (\ref{2.26}) has exponential decay at rate $L^{d-2}$. We have in particular shown that
\begin{equation}\label{2.27}
\mbox{when $K \ge c(\beta, \gamma)$, $\IP[$(\ref{2.13}) does not hold$]$ has super-polynomial decay in $L$}.
\end{equation}
Combining (\ref{2.19}), (\ref{2.23}), and (\ref{2.27}), Theorem \ref{theo2.3} follows.
\end{proof}

\section{Occupation-time bounds}
\setcounter{equation}{0}

In this section we derive uniform controls on the probability that simultaneously, in a finite collection of $L$-boxes, which are all good at levels $\alpha,\beta,\gamma$ belonging to $(u,\ov{u})$, and well spread-out, the occupation numbers of all corresponding $D$-boxes exceed $\beta \,{\rm cap}(D)$, see Theorem \ref{theo3.2}. These estimates will later play an important role in the proof of the central claim (\ref{0.6}), see Theorem \ref{theo6.3}. In spirit, the results of this section are similar to the upper bounds derived in Section 4 of \cite{Szni}, in the context of the level-set percolation of the Gaussian free field. There, in Corollary 4.4 of \cite{Szni}, uniform upper bounds are derived on the probability that simultaneously, in a finite collection of $B$-boxes, the harmonic averages of the Gaussian free field attached to much larger concentric $U$-type boxes, reach a value below a certain negative level $-a$ in the $B$-boxes. In the present context, the corresponding condition is that the occupation numbers of the $D$-boxes at level $u$ each exceed $\beta \,{\rm cap}(D)$. The additional constraint that the $B$-boxes are good$(\alpha, \beta,\gamma)$ permits, by (\ref{2.13}), to translate the information on the occupation numbers at level $u$, into an information on the total occupation-time for the interlacement at level $u$ in each $D$-box (of the collection). Whereas Gaussian bounds (such as the Borell-TIS inequality and the Dudley inequality) were central tools in Section 4 of \cite{Szni}, here instead, we are guided by some insights on the Laplace transform of the occupation-times of random interlacements and its link to large deviations of the occupation-time profiles of random interlacements, gained from \cite{LiSzni15}, see Remark \ref{rem3.3} below.

\medskip
We keep the notation of the previous section, and recall that $L \ge 1$ and $K \ge 100$ are positive integers. We consider (see (\ref{2.8}) for notation)
\begin{equation}\label{3.1}
\begin{array}{l}
\mbox{$\cC$ a non-empty finite subset of $\IL$ with points at mutual $|\cdot|_\infty$-distance at least $\ov{K}L$,}
\\
\mbox{where $\ov{K} = 2 K + 3$}.
\end{array}
\end{equation}

\n
Note that when $z_1 \not= z_2$ belong to $\cC$, then the corresponding $\check{B}_{z_1}$, $\check{B}_{z_2}$ (see (\ref{2.10})) satisfy
\begin{equation}\label{3.2}
d(\check{B}_{z_1}, \check{B}_{z_2}) \ge L .
\end{equation}
For a given $\cC$ as above, we define
\begin{equation}\label{3.3}
C = \textstyle \bigcup\limits_{z \in \cC} D_z .
\end{equation}
We will sometimes say that $D$ is in $\cC$ to mean $D = D_z$, with $z \in \cC$, and write $C = \bigcup_{D \in \cC} D$ to mean (\ref{3.3}). With a similar convention we introduce the function on $\IZ^d$ (see (\ref{1.6}), (\ref{1.9}) for notation) 
\begin{equation}\label{3.4}
V(x) = \dsl_{D \in \cC} e_C (D) \,\ov{e}_D(x), \;\; x \in \IZ^d
\end{equation}

\n
(where $e_C(D) = \sum_{y \in D} e_C(y)$, with $e_C$ the equilibrium measure of $C$). As a step towards the main goal of this section, namely Theorem \ref{theo3.2}, we first prove a proposition that compares $GV$ to the equilibrium potential $G e_C$ (denoted by $h_C$).

\begin{proposition}\label{prop3.1} (see (\ref{1.3}) for notation)

\medskip
Consider $\ve \in (0,1)$. Then, for any $L \ge 1$, $K \ge c_5(\ve)$, and $\cC$ as in (\ref{3.1}) one has
\begin{equation}\label{3.5}
\mbox{$GV \le (1 + \ve) h_C$, where $h_C = G e _C$ is the equilibrium potential of $C$ (see (\ref{1.10}))}.
\end{equation}
\end{proposition}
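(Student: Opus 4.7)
The plan is to reduce (\ref{3.5}) to a pointwise estimate on each box $D \in \cC$ using the disjointness of the $D_z$'s, and then to derive that pointwise estimate by applying the entrance-distribution bound (\ref{1.26}) of Proposition \ref{prop1.5} to each such box separately.

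By the separation in (\ref{3.1}) and the inclusion $D_0 \subseteq [-3L,4L)^d$, the boxes $\{D_z : z \in \cC\}$ are pairwise disjoint (with $d(D_z,D_{z'}) \ge L$), and $\IZ^d \setminus C$ is connected, because $d \ge 3$ and $C$ is a finite union of well-separated boxes. Using that each $\ov{e}_D$ is supported in $D$, one rewrites
\begin{equation*}
GV(x) \;=\; \sum_{D \in \cC} e_C(D) \sum_{y \in D} g(x,y)\, \ov{e}_D(y), \qquad h_C(x) \;=\; \sum_{D \in \cC} \sum_{y \in D} g(x,y)\, e_C(y),
\end{equation*}
so that (\ref{3.5}) will follow once we establish the pointwise bound
\begin{equation*}
e_C(D)\, \ov{e}_D(y) \;\le\; (1+\ve)\, e_C(y), \qquad y \in D,\; D \in \cC.
\end{equation*}

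To prove this, fix $D = D_z \in \cC$, translate by $-z$, and apply Proposition \ref{prop1.5} with $A = D_0 \subseteq B(0,4L)$ (so that the ``$L$'' of Proposition \ref{prop1.5} is $L^* = 4L$) and $B = C - z$. Every point of $B \setminus A$ lies in some shifted box $D_{z'} - z$ with $z' \ne z$, and hence has $\ell^\infty$-norm at least $|z' - z|_\infty - (4L-1) > (2K-1)L$; consequently $B \cap B(0, K^* L^*) = A$ as soon as $4 K^* \le 2K - 1$. Given $\ve > 0$, choose $\delta \in (0,1)$ so that $(1 - \delta/10)^{-1} \le 1+\ve$, and then let $c_5(\ve)$ be large enough that $K \ge c_5(\ve)$ forces $(2K-1)/4 \ge c_3(\delta)$. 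All hypotheses of Proposition \ref{prop1.5} then hold (including the connectedness of $\IZ^d \setminus B$), and (\ref{1.26}) gives, for each $y \in D$,
\begin{equation*}
e_C(D)\, \ov{e}_D(y) \;\le\; \frac{e_C(y)}{1 - \delta/10} \;\le\; (1+\ve)\, e_C(y),
\end{equation*}
which is the desired pointwise bound. Multiplying by $g(x,y) \ge 0$ and summing in $D$ and $y$ yields (\ref{3.5}).

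The main work is the geometric bookkeeping needed to cast the present setting into the exact framework of Proposition \ref{prop1.5}---choosing the auxiliary radii $L^*$ and $K^*$ so that $B \cap B(0, K^* L^*) = A$ with $K^* \ge c_3(\delta)$, and verifying the connectedness of $\IZ^d \setminus B$. Once these are in hand, the argument reduces to a single application of (\ref{1.26}).
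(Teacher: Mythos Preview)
Your proposal is correct and follows essentially the same route as the paper's proof: both reduce (\ref{3.5}) to the pointwise inequality $e_C(D)\,\ov{e}_D(y)\le(1+\ve)\,e_C(y)$ on each $D\in\cC$ and obtain it by a single application of (\ref{1.26}) from Proposition~\ref{prop1.5} after translating by $-z$. You are simply more explicit about the geometric bookkeeping (the auxiliary radii $L^*=4L$, $K^*$, and the verification that $B\cap B(0,K^*L^*)=A$), which the paper leaves implicit.
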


\begin{proof}
The claim (\ref{3.5}) will follow once we show that
\begin{equation}\label{3.6} 
V \le (1 + \ve) \,e_C.
\end{equation}

\n
Both $V$ and $e_C$ are supported by $C$ and it suffices to show that for any $z \in \cC$, and $D = D_z$, we have
\begin{equation}\label{3.7}
e_C(D) \,\overline{e}_D(x) \le (1 + \ve) \,e_C(x), \; \mbox{for all} \; x \in D( \subseteq C).
\end{equation}

\n
We can apply Proposition \ref{prop1.5} with the choice $A = D-z$, $B = C-z$ (recall $D = D_z)$, and note that by (\ref{3.1}), (\ref{3.2}), $\IZ^d \backslash (C-z)$ is connected). Thus, by (\ref{1.26}) we see that when $K \ge c_5(\ve)$, (\ref{3.7}) holds. This concludes the proof of Proposition \ref{prop3.1}.
\end{proof}

\medskip
We are now ready for the main result of this section. It plays the role of Corollary 4.4 of \cite{Szni}. The notion of $h$-good box, see (5.9) of \cite{Szni}, corresponds to condition $N_u(D) \ge \beta \,{\rm cap}(D)$ in the present context (see (\ref{2.14}) for notation).

\begin{theorem}\label{theo3.2} ($\ov{u}$ as in (\ref{2.3}), see (\ref{2.11}) - (\ref{2.13}) for the definition of good$(\alpha, \beta, \gamma)$)

\medskip
Consider $0 < u < \ov{u}$, $0 < \ve < 1$ such that $\ve( \mbox{\f $\sqrt{\mbox{\normalsize $\frac{\ov{u}}{u}$}}$} - 1) < 1$, and $\alpha > \beta > \gamma$ in $(u,\ov{u})$. Then, for $K \ge c_5(\ve)$, $L \ge 1$, and $\cC$ as in (\ref{3.1}) we have
\begin{equation}\label{3.15}
\begin{array}{l}
\IP \big[\bigcap\limits_{z \in \cC} \{B_z \;\mbox{is good$(\alpha, \beta, \gamma)$ and $N_u(D_z) \ge \beta \,{\rm cap}(D)\}\big] \le$}
\\[2ex]
\exp \big\{ - \big(\sqrt{\gamma} - \frac{\sqrt{u}}{1- \ve(\sqrt{\frac{\ov{u}}{u}} - 1)} \,\big) (\sqrt{\gamma} - \sqrt{u}) \,{\rm cap}(C)\big\}.
\end{array}
\end{equation}
\end{theorem}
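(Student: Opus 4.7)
The plan is to reduce the event in (\ref{3.15}) to a lower bound on the weighted occupation time $\langle V, L_{\cdot,u}\rangle$, and then apply the exponential Chebyshev inequality together with the Laplace transform formula (\ref{1.40}). For any $z \in \cC$, on the event $\{B_z\mbox{ is good}(\alpha,\beta,\gamma)\}\cap\{N_u(D_z) \ge \beta\,{\rm cap}(D)\}$, the first $[\beta\,{\rm cap}(D)]$ excursions $Z_\ell^{D_z}$ from $D_z$ to $\partial U_z$ all stem from interlacement trajectories labelled at most $u$, so by (\ref{2.13}),
\[
\langle e_{D_z},L_{\cdot,u}\rangle = \sum_{\ell=1}^{N_u(D_z)}\int_0^{T_{U_z}} e_{D_z}(Z_\ell^{D_z}(s))\,ds \ge \sum_{1\le\ell\le\beta\,{\rm cap}(D)}\int_0^{T_{U_z}} e_{D_z}(Z_\ell^{D_z}(s))\,ds \ge \gamma\,{\rm cap}(D).
\]
Dividing by ${\rm cap}(D)$, multiplying by $e_C(D_z)$, and summing over $z \in \cC$, the definition of $V$ and the identity $\sum_{z \in \cC} e_C(D_z) = {\rm cap}(C)$ yield $\langle V, L_{\cdot,u}\rangle \ge \gamma\,{\rm cap}(C)$ on the joint event of (\ref{3.15}).

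For $\lambda \in (0, 1/(1+\ve))$, Proposition \ref{prop3.1} gives $\|GV\|_\infty \le 1+\ve$ (since $h_C \le 1$), so (\ref{1.40}) applied to $\lambda V$ is valid, and exponential Chebyshev bounds the probability in (\ref{3.15}) by $\exp\{-\lambda\gamma\,{\rm cap}(C) + u\langle \lambda V,(I-\lambda GV)^{-1}1\rangle\}$. The key observation is the geometric identity $V h_C = V$, valid because $V$ is supported on $C$ while $h_C \equiv 1$ on $C$. Combined with Proposition \ref{prop3.1}, a short induction on $n \ge 1$ gives
\[
(GV)^n 1 = G\big(V\cdot(GV)^{n-1}1\big) \le (1+\ve)^{n-1} G(V h_C) = (1+\ve)^{n-1} GV \le (1+\ve)^n h_C,
\]
whence $\langle V,(GV)^n 1\rangle \le (1+\ve)^n\,{\rm cap}(C)$ for all $n \ge 0$, and summing the Neumann series yields $\langle \lambda V,(I-\lambda GV)^{-1}1\rangle \le \lambda\,{\rm cap}(C)/(1-\lambda(1+\ve))$.

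Set $\delta = \ve(\sqrt{\ov{u}/u}-1) \in (0,1)$ and choose $\lambda$ so that $1-\lambda(1+\ve) = (1-\delta)\sqrt{u/\gamma}$; this defines a valid $\lambda \in (0, 1/(1+\ve))$. A direct algebraic manipulation collapses the Chebyshev exponent to
\[
-\frac{\big(\sqrt{\gamma}-(1-\delta)\sqrt{u}\big)\big(\sqrt{\gamma}-\sqrt{u}/(1-\delta)\big)}{1+\ve}\,{\rm cap}(C).
\]
The inequality (\ref{3.15}) is trivial whenever $\sqrt{\gamma} < \sqrt{u}/(1-\delta)$ (its right-hand side then exceeds $1$), so one may assume $\sqrt{\gamma}-\sqrt{u}/(1-\delta) \ge 0$. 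Using the identity $\delta\sqrt{u} = \ve(\sqrt{\ov{u}}-\sqrt{u})$ and $\sqrt{\gamma}\le\sqrt{\ov{u}}$, one has $\sqrt{\gamma}-(1-\delta)\sqrt{u} = (\sqrt{\gamma}-\sqrt{u})+\ve(\sqrt{\ov{u}}-\sqrt{u}) \ge (1+\ve)(\sqrt{\gamma}-\sqrt{u})$; this majorizes the displayed exponent by $-(\sqrt{\gamma}-\sqrt{u}/(1-\delta))(\sqrt{\gamma}-\sqrt{u})\,{\rm cap}(C)$, completing the proof of (\ref{3.15}).

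The main obstacle is the second step: turning the pointwise estimate $GV \le (1+\ve)h_C$ from Proposition \ref{prop3.1} into a Laplace-transform bound in which $(1+\ve)$ appears only once, inside a single geometric-series denominator, rather than compounding as $(1+\ve)^n$ across the Neumann iterates. The identity $V h_C = V$ is precisely what prevents this compounding, and it is this geometric input (rather than any Gaussian-style concentration) that drives the sharp rate $(\sqrt{\gamma}-\sqrt{u})^2$ in the limit $\ve\to 0$.
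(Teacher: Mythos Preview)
Your proof is correct and follows essentially the same route as the paper: reduce the event to $\langle V,L_{\cdot,u}\rangle\ge\gamma\,{\rm cap}(C)$ via (\ref{2.13}), apply exponential Chebyshev together with (\ref{1.40}), and control the Laplace transform through Proposition~\ref{prop3.1}. Two minor remarks: first, the paper picks the Chebyshev parameter $a=1-\sqrt{u/\gamma}$ directly, which lands on the target exponent without the extra inequality step you carry out at the end; second, your identity $Vh_C=V$ is true but not actually needed, since the cruder bound $h_C\le 1$ already gives $\|G(\lambda V)\|_\infty\le\lambda(1+\ve)$ and hence $\|(I-\lambda GV)^{-1}1\|_\infty\le(1-\lambda(1+\ve))^{-1}$, which is the same estimate you obtain---so the ``compounding'' you worry about never arises in the first place.
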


\begin{proof}
When $B = B_z$, with $z$ in $\cC$, is good$(\alpha, \beta, \gamma)$ and $N_u(D) \ge \beta\,{\rm cap}(D)$ (with $D = D_z)$, then, in the notation of (\ref{1.37})
\begin{equation}\label{3.16}
\begin{array}{lcl}
\big\langle \mu_{C,u}, \dil^\infty_0 e_D(X_s) ds \big\rangle &\!\!\! \stackrel{D \subseteq C}{=} &\!\!\! \big\langle \mu_{D,u}, \dil^\infty_0 e_D(X_s) ds \big\rangle
\\[2ex]
&\!\!\!  \ge &\!\!\!  \dsl^{\beta\, {\rm cap}(D)}_{\ell = 1} \dil^{T_U}_0 e_D(Z^D_\ell(s)) ds \quad \mbox{(since $N_u(D) \ge \beta \,{\rm cap}(D)$)}
\\[0.5ex]
& \!\!\! \stackrel{(\ref{2.13})}{\ge} & \!\!\! \gamma\, {\rm cap}(D).
\end{array}
\end{equation}
Thus, with $V$ as in (\ref{3.4}), we find that
\begin{equation}\label{3.17}
\begin{array}{l}
\IP\big[\bigcap\limits_{z \in \cC} \big\{B_z \; \mbox{is good$(\alpha, \beta, \gamma)$ and $N_u(D_z) \ge \beta \,{\rm cap}(D)\big\}\big] \le$} 
\\[1ex]
\IP\big[\bigcap\limits_{D \:{\rm in}\; \cC} \big\{\big\langle \mu_{C,u}, \dil^\infty_0 e_D(X_s) ds\big\rangle \ge \gamma \,{\rm cap}(D)\big\}\big] \stackrel{(\ref{3.4})}{\le} 
\\[2ex]
\IP\big[\big\langle \mu_{C,u}, \dil^\infty_0 V(X_s)\big\rangle ds \ge \gamma \,{\rm cap}(C)\big],
\end{array}
\end{equation}
where in the last step we have used that $\sum_{D \,{\rm in} \, \cC} e_C(D) = e_C(C) = {\rm cap}(C)$. Setting $a = (\sqrt{\gamma} - \sqrt{u}) / \sqrt{\gamma} \in (0,1)$, we then see that the last probability in (\ref{3.17}) equals
\begin{equation}\label{3.18}
\begin{array}{l}
\IP \big[\big\langle \mu_{C,u}, \dil^\infty_0 a \,V(X_s)ds\big\rangle \ge (\sqrt{\gamma} - \sqrt{u}) \,\sqrt{\gamma} \,{\rm cap}(C)\big] \stackrel{\rm Chebyshev}{\le}
\\[1ex]
\exp\{ - \sqrt{\gamma} (\sqrt{\gamma} - \sqrt{u}) \,{\rm cap}(C)\} \;\IE\Big[\exp\Big\{\dsl_{x \in \IZ^d} a\,V(x) \,L_{x,u}\Big\}\Big].
\end{array}
\end{equation}

\n
When $K \ge c_5(\ve)$, we know by Proposition \ref{prop3.1} that $\|G aV\|_\infty \le a(1 + \ve) = 1 -  \mbox{\f $\sqrt{\mbox{\normalsize $\frac{u}{\gamma}$}}$}  + \ve(1 - \mbox{\f $\sqrt{\mbox{\normalsize $\frac{u}{\gamma}$}}$} ) < 1$, since by assumption $\ve( \mbox{\f $\sqrt{\mbox{\normalsize $\frac{\gamma}{u}$}}$} -1) \le \ve ( \mbox{\f $\sqrt{\mbox{\normalsize $\frac{\ov{u}}{u}$}}$} - 1) < 1$. We then see that $\|(I - GaV)^{-1} 1\|_\infty \le (1 - a(1 + \ve))^{-1}$. Hence, by (\ref{1.40}), the last expectation in (\ref{3.18}) equals
\begin{equation}\label{3.19}
\begin{array}{l}
\exp\{u \langle a V, (I-GaV)^{-1}1\rangle\} \le \exp\big\{u \,a(1-a(1 + \ve)\big)^{-1} \langle V,1 \rangle\big\} =
\\[1ex]
\exp\Big\{\sqrt{u} \Big(1 - \ve \Big(\sqrt{\mbox{\f $\dis\frac{\gamma}{u}$}} - 1 \Big)\Big)^{-1} (\sqrt{\gamma} - \sqrt{u}) \,{\rm cap}(C)\Big\} \le 
\\[1ex]
\exp\Big\{\sqrt{u} \Big(1 - \ve \Big(\sqrt{\mbox{\f $\dis\frac{\ov{u}}{u}$}} - 1 \Big)\Big)^{-1} (\sqrt{\gamma} - \sqrt{u}) \,{\rm cap}(C)\Big\} ,
\end{array}
\end{equation}

\n
where we used the identity $\langle V,1\rangle = {\rm cap}(C)$ (see (\ref{3.4})) and the definition of $a$.
Inserting this bound in the expectation in the last line of (\ref{3.18}), and then coming back to (\ref{3.17}), we obtain (\ref{3.15}). This concludes the proof of Theorem \ref{theo3.2}.
\end{proof}

\begin{remark}\label{rem3.3} \rm
Informally, the above proof follows the spirit of \cite{LiSzni15}. The large deviation results of \cite{LiSzni15} suggest the following procedure. To control the occurrence of a bump over $C$ at level $\gamma$ ($> u$) for the occupation-times of random interlacements, one applies  the Chebyshev inequality to the Laplace transform of the occupation-times tested against the function $W = -\frac{Lf}{f}$, where $L$ stands for the generator of the continuous-time simple random walk, and \hbox{$f = (\sqrt{\frac{\gamma}{u}} - 1) \,h_C + 1$,} i.e. $W = a \, e_C$, and $a$ in the notation of (\ref{3.18}) equals $1 - \mbox{\scriptsize $\sqrt{\frac{u}{\gamma}}$}$. In place of $W$, we instead use the approximation $a\,V$, with $V$ as in (\ref{3.4}), which is better suited to cope with the precise type of bump in occupation-times we are faced with \hbox{in (\ref{3.16}).} 

Later on, the coarse graining procedure developed in the next sections will in essence show that the disconnection of $B_N$ by the random interlacements at level $u$ ($< \ov{u}$) is well controlled, in principal exponential order, by the occurrence of a bump in occupation-times, at a level $\gamma$ arbitrarily close to $< \ov{u}$, over some $C$ as above, which ``surrounds well'' $B_N$, see (\ref{6.8})-(\ref{6.10}) below. In close spirit, the occurrence of bumps in the density profile of the occupation-times of random interlacements, insulating a macroscopic body, were investigated in \cite{LiSzni15}, see for instance (0.7) or Theorems 6.2 and 6.4 of \cite{LiSzni15}.
\hfill $\square$
\end{remark}

\section{Coupling of excursions}
\setcounter{equation}{0}

In this section we prepare the ground for the next section. We introduce a coupling based on the soft local time techniques developed in \cite{PopoTeix13} and in \cite{ComeGallPopoVach13}, between the excursions $Z^{\check{D},\check{U}}_k$, $k \ge 1$, from $\check{D}$ to $\partial \check{U}$ in the random interlacements, where $\check{D},\check{U}$ runs over a collection $\check{D}_z$, $\check{U}_z$, $z \in \cC$, with $\cC$ a finite subset of $\IL$ as in (\ref{3.1}), and a certain collection of i.i.d. excursions $\wt{Z}_k^{\check{D}}$, $k \ge 1$, from $\check{D}$ to $\partial \check{U}$, which are independent as $\check{D},\check{U}$ runs over the same $\check{D}_z, \check{U}_z$, $z \in \cC$ as above. This coupling will play an important role in the next section, when proving the super-exponential estimate on the probability of finding more than a few columns containing a bad box in Theorem \ref{theo5.1}. The main result of this section is Proposition \ref{prop4.1}. It contains the crucial controls on the coupling of excursions that we will need in the next section.

\medskip
As in Sections 2 and 3, we consider integers $L \ge 1$ and $K \ge 100$, as well as a non-empty subset $\cC$ of $\IL$ ($= L \IZ^d)$ satisfying (\ref{3.1}). We recall that the boxes $\check{B}_z$, $z \in \cC$ are at pairwise mutual $|\cdot |_\infty$-distance at least $L$, see (\ref{3.2}). A central object of interest for us in this section are the excursions $Z^{\check{D}_z,\check{U}_z}_k$, $k \ge 1$, from $\check{D}_z$ to $\partial \check{U}_z$, in the interlacements (see (\ref{1.41})), as $z$ runs over $\cC$. As a shorthand we will simply write $Z^{\check{D}}_k$, $k \ge 1$, in place of $Z^{\check{D}_z,\check{U}_z}_k$, $k \ge 1$ (where $\check{D},\check{U}$ runs over $\check{D}_z$, $\check{U}_z$, $z \in \cC$). This is in line with the similar shorthand notation $Z^D_\ell$, $\ell \ge 1$, for the excursions from $D$ to $\partial U$, introduced above (\ref{2.11}). The reason for the introduction of the $Z^{\check{D}}_k$, $k \ge 1$, is the fact that this sequence contains the information of both sequences $Z^D_\ell$, $\ell \ge 1$ and $Z^{D'}_\ell$, $\ell \ge 1$, when $D'$ is a ``neighboring'' box of $D$, that is, when $D = D_z$, $D' = D_{z'}$, $\check{D} = \check{D}_z$, with $z \in \cC$ and $z'$ in $\IL$ is a neighbor of $z$. Indeed, $D$ and $D'$ are contained in $\check{D}$ and $U$ and $U'$ are contained in $\chu$ by (\ref{2.9}), (\ref{2.10}). This feature will be important when using the coupling constructed in this section to bound the probability that collections of $L$-boxes $B$ are bad$(\alpha, \beta, \gamma)$, since this last constraint, via the negation of (\ref{2.12}), simultaneously involves the excursions $Z^D_\ell$, $\ell \ge 1$, and the excursions $Z^{D'}_\ell$, $\ell \ge 1$.

\medskip
The soft local time technique of \cite{PopoTeix13} offers a way to couple the excursions $Z^{\check{D}_z}_k$, $k \ge 1$, $z \in \cC$, of the random interlacements, with independent excursions $\wt{Z}^{\check{D}_z}_k$, $k \ge 1$, $z \in \cC$. We refer to Section 2 of \cite{ComeGallPopoVach13} for details. We will use here some facts that we recall below. First some notation. We introduce the subsets of $\IZ^d$
\begin{equation}\label{4.1}
\check{C} = \textstyle \bigcup\limits_{z \in \cC} \;\check{D}_z \subseteq \check{V} = \bigcup\limits_{z \in \cC} \check{U}_z ,
\end{equation}

\n
and, for any $x \in \IZ^d$, denote by $Q_x$ the probability measure governing two independent continuous-time walks $X^1_{^\point}$ and $X^2_{^\point}$ on $\IZ^d$ respectively starting from $x$ and from the initial distribution $\ov{e}_{\check{C}}$ (the normalized equilibrium measure of $\check{C}$). We consider the random variable $Y$ defined as 
\begin{equation}\label{4.2}
Y = \left\{ \begin{array}{ll}
X^1_{H_{\check{C}}}, & \mbox{on $\{H_{\check{C}} < \infty\}$} \quad \mbox{(with $H_{\check{C}}$ the entrance time of $X^1$ in $\check{C}$)}
\\[1ex]
X^2_0, &  \mbox{on $\{H_{\check{C}} = \infty\}$}.
\end{array}\right.
\end{equation}

\n
With the soft local time technique, one constructs a coupling $\IQ^\cC$ of the law $\IP$ of the random interlacements with a collection of independent right-continuous, Poisson counting functions, with unit intensity, vanishing at $0$, $(n_{\chd_{z}}(0,t))_{t \ge 0}$, $z \in \cC$, and with independent collections of i.i.d. excursions $\wt{Z}^{\check{D}_z}_k$, $k \ge 1$, $z \in \cC$, having for each $z \in \cC$ the same law on $\Gamma (\check{U}_z)$ as $X_{\cdot \, \Lambda T_{\check{U}_z}}$ under $P_{\ov{e}_{\check{D}_z}}$ (recall $\ov{e}_{\check{D}_z}$ stands for the normalized equilibrium measure of $\check{D}_z$). 
%
%\medskip
For convenience, we also set (for $\check{D} = \check{D}_z$, with $z \in \cC$)
\begin{equation}\label{4.3}
n_{\chd}(a,b) = n_{\chd}(0,b) - n_{\chd}(0,a), \;\mbox{for $0 \le a \le b$}
\end{equation}
(the notation is consistent when $a=0$).Thus, one has
\begin{equation}\label{4.4}
\begin{array}{l}
\mbox{under $\IQ^\cC$, as $\chd$ varies over $\chd_z$, $z \in \cC$, the ($(n_{\chd}(0,t))_{t \ge 0}$, $\wt{Z}^{\chd}_k, k \ge 1$) are}
\\
\mbox{independent collections of independent processes, with $(n_{\chd}(0,t))_{t \ge 0}$}
\\
\mbox{distributed as a Poisson counting process of intensity $1$, and $\wt{Z}^{\chd}_k, k \ge 1$,  }
\\
\mbox{as i.i.d. $\Gamma(\chu)$-variables with same law as $X_{\cdot\, \wedge T_{\chu}}$ under $P_{\ov{e}_{\chd}}$}.
\end{array}
\end{equation}    

\n
In addition, the coupling governed by $\IQ^\cC$ has the following crucial property, see Lemma 2.1 of \cite{ComeGallPopoVach13}. If for some $\delta \in (0,1)$, and all $\chd = \chd_z$, $z \in \cC$, $y \in \chd$, $x \in \partial \check{V}$
\begin{equation}\label{4.5}
\big(1 - \mbox{\f $\dis\frac{\delta}{3}$}\big) \ov{e}_{\chd}(y) \le Q_x[Y = y\,|\,Y \in \chd] \le \big(1 + \mbox{\f $\dis\frac{\delta}{3}$}\big) \,\ov{e}_\chd(y),
\end{equation}
then, on the event (in the auxiliary space governed by $\IQ^\cC$, and with $m_0 \ge 1$ arbitrary)
\begin{equation}\label{4.6}
\begin{split}
\wt{U}^{m_0}_\chd = \{ & n_{\chd} (m,(1 + \delta) m) < 2 \delta m,\;(1-\delta) \,m < n_{\chd}(0,m) < (1+ \delta) m, \\
&  \mbox{for all $m \ge m_0\}$},
\end{split}
\end{equation}
one has for all $m \ge m_0$ the following inclusion among subsets of $\Gamma(\chu)$ (see below (\ref{1.1}) for notation)
\begin{align}
\{\wt{Z}^\chd_1, \dots, \wt{Z}^\chd_{(1 - \delta)m}\} & \subseteq \{Z^\chd_1, \dots, Z^\chd_{(1 + 3\delta)m}\}, \;\;\mbox{and} \label{4.7}
\\[1ex]
\{Z^\chd_1, \dots, Z^\chd_{(1 - \delta)m}\} & \subseteq \{\wt{Z}^\chd_1, \dots, \wt{Z}^\chd_{(1 + 3\delta)m}\}, \label{4.8}
\end{align}

\medskip\n
where (similarly as above (\ref{2.11})) $\wt{Z}^\chd_v$ and $Z^\chd_v$ stand for $\wt{Z}^\chd_{[v]}$ and $Z^\chd_{[v]}$, when $v \ge 1$, and the sets on the left-hand sides of (\ref{4.7}), (\ref{4.8}) are empty when $(1-\delta) m < 1$. Note that the (favorable) event $\wt{U}^{m_0}_\chd$, which ensures (\ref{4.7}), (\ref{4.8}) is solely defined in terms of the Poisson counting process $(n_\chd (0,t))_{t \ge 0}$, and that as $\chd$ varies over $\chd_z$, $z \in \cC$, these counting processes are i.i.d. (see (\ref{4.4})).

\medskip
We will now extract excursions from $D$ to $\partial U$ and $D'$ to $\partial U'$ from a sequence of excursions from $\chd$ to $\partial \chu$, when $D, \chd$ and $D'$ respectively correspond to $D_z, \chd_z$ and $D_{z'}$, with $z \in \cC$ and $z'$ in $\IL$ neighbor of $z$ (as already mentioned at the beginning of this section, our motivation comes from later having to handle condition (\ref{2.12})). From the infinite sequence of i.i.d. excursions $\wt{Z}^\chd_k, k \ge 1$ (with same law as $X_{\cdot\, \wedge T_{\chu}}$ under $P_{\ov{e}_{\chd}})$, we can extract the successive excursions $\wh{Z}^D_\ell, \ell \ge 1$ and $\wh{Z}^{D'}_\ell, \ell \ge 1$, from $D$ to $\partial U$ and $D'$ to $\partial U'$ that a.s. appear in $\wt{Z}^{\chd}_1,\wt{Z}^{\chd}_2, \dots$ (so for instance $\wh{Z}^D_1, \wh{Z}^D_2, \wh{Z}^D_3$ are the excursions in the order of appearance from $D$ to $\partial U$ in $\wt{Z}^{\check{D}}_1$, then $\wh{Z}^D_4, \wh{Z}^D_5$ are the excursions contained in $\wt{Z}^{\chd}_2$, then $\wt{Z}^{\chd}_3$ contains no such excursion, $\wh{Z}^D_6$ is the unique excursion contained in $\wt{Z}^{\chd}_4$, and so on ...). Note that for a given $\chd$, the sequence $\wh{Z}^D_\ell, \ell \ge 1$, is typically not i.i.d. anymore and the sequences $\wh{Z}^D_\ell, \ell \ge 1$, and $\wh{Z}^{D'}_\ell, \ell \ge 1$, are typically mutually dependent. However, 
\begin{equation}\label{4.9}
\begin{array}{l}
\mbox{under $\IQ^\cC$, the collections $\wh{Z}^{D_z}_\ell, \ell \ge 1$, $\wh{Z}^{D_{z'}}_\ell, \ell \ge 1$, with}
\\
\mbox{$z'$ neighbor of $z$ in $\IL$, are independent, as $z$ varies over $\cC$}
\end{array}
\end{equation}

\medskip\n
(indeed, the $\wt{Z}^{\check{D}_z}_k, k \ge 1$, are independent as $z$ varies over $\cC$, see (\ref{4.4})).

\medskip
We now come to the definition of a favorable event $\wt{G}_B$ that will play an important role in the next section. In the proposition below, we will see that for an adequate choice of parameters the complement of this favorable event has a probability, which decays super-polynomially in $L$.

\medskip
Given $\delta$ and $\kappa$ in $(0,\frac{1}{2})$, an $L$-box $B$ in $\cC$ (i.e. $B = B_z$, with $z \in \cC$), and a neighboring box $B'$ (this precludes that $B'$ is in $\cC$ by (\ref{3.1}), (\ref{3.2})), we set (see (\ref{4.6}) for notation)
\begin{equation}\label{4.10}
\begin{split}
\wt{G}_B = \wt{U}^{m_0}_\chd \cap \Big\{&\wt{Z}^\chd_1,\dots,\wt{Z}^\chd_m \;\mbox{contain at least $(1 - \kappa)\,m$ \mbox{\f $\dis\frac{{\rm cap}(D)}{{\rm cap}(\chd)}$} and at most}
\\
& (1 + \kappa)\,m\, \mbox{\f $\dis\frac{{\rm cap}(D)}{{\rm cap}(\chd)}$} \;\mbox{excursions from $D$ to $\partial U$, for all $m \ge m_0\Big\}$}
\\[1ex]
\underset{B' \,{\rm neighbor}\, B}{\textstyle \bigcap} \Big\{ & \wt{Z}^\chd_1,\dots,\wt{Z}^\chd_m \;\mbox{contain at least $(1 - \kappa)\,m$ \mbox{\f $\dis\frac{{\rm cap}(D)}{{\rm cap}(\chd)}$} and at most}
\\
& (1 + \kappa)\,m\, \mbox{\f $\dis\frac{{\rm cap}(D)}{{\rm cap}(\chd)}$} \;\mbox{excursions from $D'$ to $\partial U'$, for all $m \ge m_0\Big\}$},
\end{split}
\end{equation}

\medskip\n
where $m_0 = [(\log L)^2] + 1$ in the above definition.

\medskip
Here is the main result of this section.

\medskip
\begin{proposition}\label{prop4.1} ($m_0 =[(\log L)^2] + 1$, $\delta, \kappa$ in $(0,\frac{1}{2})$, $\cC$ as in (\ref{3.1}))
\begin{equation}\label{4.11}
\mbox{When $K \ge c_6(\delta)$, then (\ref{4.5}) holds.}
\end{equation}

\medskip\n
When $(1-\delta) m \ge m_0$ and $\delta m_0 \ge 1$, then for $z \in \cC$, $z'$ neighbor of $z$ in $\IL$, $\IQ^\cC$-a.s.~on $\wt{G}_B$,
\begin{equation}\label{4.12}
\left\{\begin{array}{rll}
{\rm i)} &\;\; \{\wh{Z}^D_1, \dots,\wh{Z}^D_\ell\} \subseteq \{Z^D_1,\dots, Z^D_{(1 + \wh{\delta}) \ell}\}, & \mbox{for $\ell \ge m_0$},
\\[1ex]
{\rm ii)} &\;\;  \{Z^D_1, \dots,Z^D_\ell\} \subseteq \{\wh{Z}^D_1,\dots, \wh{Z}^D_{(1 + \wh{\delta}) \ell}\}, & \mbox{for $\ell \ge m_0$},
\end{array}\right.
\end{equation}
\begin{equation}\label{4.13}
\left\{\begin{array}{rll}
{\rm i)} &\;\; \{\wh{Z}^{D'}_1, \dots,\wh{Z}^{D'}_\ell\} \subseteq \{Z^{D'}_1,\dots, Z^{D'}_{(1 + \wh{\delta}) \ell}\}, & \mbox{for $\ell \ge m_0$},
\\[1ex]
{\rm ii)} &\;\;  \{Z^{D'}_1, \dots,Z^{D'}_\ell\} \subseteq \{\wh{Z}^{D'}_1,\dots, \wh{Z}^{D'}_{(1 + \wh{\delta}) \ell}\}, & \mbox{for $\ell \ge m_0$},
\end{array}\right.
\end{equation}
where we have set
\begin{equation}\label{4.14}
1 + \wh{\delta} = \mbox{\f $\dis\frac{1 + \kappa}{1 - \kappa}$} \; \mbox{\f $\dis\frac{(1 + 4 \delta)^2}{(1 - 2 \delta)^2}$}\;.
\end{equation}
Moreover, when $K \ge c_7(\delta, \kappa)$ ($\ge c_6(\delta)$), then
\begin{equation}\label{4.15}
\lim\limits_L \;\mbox{\f $\dis\frac{1}{\log L}$} \; \log \IQ^\cC [\wt{G}_B^c] = - \infty
\end{equation}
(note that the above probability does not depend on the choice of $\cC$, or of $B$ in $\cC$).
\end{proposition}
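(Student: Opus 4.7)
The plan is to establish the four assertions in order, each leaning on what precedes. Throughout the whole argument, the geometric separation (\ref{3.1})--(\ref{3.2}), which gives mutual $\ell^\infty$-distance at least $L$ between the $\check B_z$ with $z\in\cC$, is the single ingredient that lets the local potential-theoretic estimates from Section 1 be applied with uniform constants at every $\chd_z$ simultaneously.

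For (\ref{4.11}), I would decompose $Q_x[Y=y,\,Y\in\chd]$ as $P_x[H_{\chc}<\infty,\,X^1_{H_{\chc}}=y]+P_x[H_{\chc}=\infty]\,\ov e_{\chc}(y)\,1_{\chd}(y)$ and observe that the conditional probability in (\ref{4.5}) is a convex combination of $P_x[X^1_{H_\chc}=y\mid H_\chc<\infty,\,X^1_{H_\chc}\in\chd]$ and $e_\chc(y)/e_\chc(\chd)$. By (\ref{1.25}) and (\ref{1.26}), applied after translating by $-z$ with $A=\chd-z$ and $B=\chc-z$, each of these quantities lies within $(1\pm\delta/10)$ of $\ov e_{\chd}(y)$, provided the condition $B\cap B(0,K'L')=A$ holds for some auxiliary pair $L'=O(L)$, $K'\ge c_3(\delta/10)$. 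The separation (\ref{3.2}) ensures this as soon as $K$ is of order $K'$, and the location of $x\in\partial\chv$, at $\ell^\infty$-distance of order $KL$ from $\chd_z$ and outside $\chc\subseteq\chv$, places $x$ in the admissible range of Proposition \ref{prop1.5}. Taking $K\ge c_6(\delta)$ then tightens $(1\pm\delta/10)$ into $(1\pm\delta/3)$.

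For (\ref{4.12})--(\ref{4.13}), with (\ref{4.5}) in hand the construction of Lemma~2.1 of \cite{ComeGallPopoVach13} delivers, on $\wt U^{m_0}_{\chd}$, the set inclusions (\ref{4.7}) and (\ref{4.8}) for every $m\ge m_0$. The key observation is that, because $D\subseteq\chd$ and $U\subseteq\chu$ by (\ref{2.9}), every excursion from $D$ to $\partial U$ sits inside a unique excursion from $\chd$ to $\partial\chu$, so the $\wh Z^D_\ell$ (resp.~$Z^D_\ell$) are obtained by listing in order the $D$-excursions contained in the $\wt Z^\chd_k$ (resp.~$Z^\chd_k$). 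Given $\ell\ge m_0$, I would pick $m$ just large enough that the lower bound in (\ref{4.10}) forces at least $\ell$ $D$-excursions inside $\wt Z^\chd_1,\ldots,\wt Z^\chd_{(1-\delta)m}$, chain (\ref{4.7}) with (\ref{4.8}) applied at the slightly inflated scale $m'=(1+3\delta)m/(1-\delta)$, and then use the upper bound in (\ref{4.10}) to cap the number of $D$-excursions in the resulting $\wt Z^\chd$-list. The composite slack produced by this two-step passage between the $\chd$-level and $D$-level scales is precisely $1+\wh\delta=\tfrac{1+\kappa}{1-\kappa}\cdot\tfrac{(1+4\delta)^2}{(1-2\delta)^2}$ from (\ref{4.14}); the reverse inclusion (\ref{4.12})(ii) is symmetric, and (\ref{4.13}) is identical because $D'\subseteq\chd$ and $U'\subseteq\chu$ as well.

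Finally, for (\ref{4.15}) I would union-bound $\wt G_B^c$ over the three families of failure events in (\ref{4.10}). The event $(\wt U^{m_0}_{\chd})^c$ is a union over $m\ge m_0$ of standard Poisson-deviation events for the unit-rate counting process $n_\chd$, each of exponentially small probability in $m\delta^2$; summing the geometric tail from $m_0=[(\log L)^2]+1$ gives super-polynomial decay in $L$. For the $D$-excursion counts, the variables $\xi_k=\#\{D\text{-excursions in }\wt Z^\chd_k\}$ are i.i.d.\ under $\IQ^\cC$, with uniform exponential moments inherited from Lemma \ref{lem1.7} (applied with $A=D$, $U=\chu$, combined with the strong Markov property at $H_D$). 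Their common mean satisfies $\mu={\rm cap}(D)/{\rm cap}(\chd)\,(1+O(K^{-(d-2)}))$: the leading term follows from $P_{\ov e_\chd}[H_D<\infty]={\rm cap}(D)/{\rm cap}(\chd)$, itself a direct consequence of the sweeping identity (\ref{1.11}), while the correction absorbs both the effect of truncating at $T_\chu$ (via (\ref{1.4}), (\ref{1.10})) and the possibility of multiple $D$-excursions per $\wt Z^\chd_k$ (controlled by the $1/p$-bound implicit in (\ref{1.31})). Choosing $K\ge c_7(\delta,\kappa)$ forces this correction below $\kappa/2$, after which Cram\'er-type concentration for i.i.d.\ sums with exponential moments, summed over $m\ge m_0$, yields super-polynomial decay in $L$; a final union bound over the $2d$ neighbours $B'$ closes (\ref{4.15}). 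The main technical obstacle is the bookkeeping of (\ref{4.12})--(\ref{4.13}): four distinct excursion counts (at the $\chd$- and $D$-levels, for both the $Z$- and $\wt Z$-sequences) must be threaded through a single chain of inclusions, and it is exactly this double passage that produces the composite slack (\ref{4.14}).
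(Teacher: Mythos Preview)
Your proposal is correct and follows essentially the same route as the paper. For (\ref{4.11}) you phrase the decomposition as a convex combination rather than as the explicit ratio (\ref{4.16}), which is in fact slightly cleaner (no need for the inequalities $\frac{1-\delta/10}{1+\delta/10}\ge 1-\frac{\delta}{3}$ etc.). For (\ref{4.12})--(\ref{4.13}) your chaining of (\ref{4.7}) through (\ref{4.8}) at the inflated scale, followed by the two-sided count bounds of (\ref{4.10}), is exactly the paper's argument; the integer-part bookkeeping you omit is what produces the $(1-2\delta)^2$ rather than $(1-\delta)^2$ in (\ref{4.14}).

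Two small points. First, in your treatment of (\ref{4.15}) you invoke Lemma \ref{lem1.7} ``with $A=D$, $U=\chu$''; this should be with the $U$-box, since you need exponential moments for the number of $D$-to-$\partial U$ excursions, not $D$-to-$\partial\chu$ excursions (cf.\ the paper's (\ref{4.28})). Second, your handling of the excursion-count deviations via a single Cram\'er bound on the i.i.d.\ sum $\sum\xi_k$ is a valid alternative to the paper's split (Bernoulli comparison for the lower tail via Lemma \ref{lem4.2}, exponential Chebyshev for the upper tail via (\ref{4.28})--(\ref{4.29})); both rely on the same estimate $E[\xi_k]=\tfrac{{\rm cap}(D)}{{\rm cap}(\chd)}(1+O(K^{2-d}))$ and lead to the same super-polynomial bound.
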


\begin{proof}
We begin with the proof of (\ref{4.11}). We observe that in the notation of (\ref{4.1}), (\ref{4.2}), for $x \in \partial \check{V}$, $y \in \chd$
\begin{align*}
Q_x[Y = y] & \stackrel{(\ref{4.2})}{=} P_x[X_{H_\chc} = y, H_\chc < \infty] + P_x[H_\chc = \infty] \,\ov{e}_\chc(y)
\\
&\;\, = \;P_x[X_{H_\chc} = y | H_\chc = H_\chd < \infty] \, P_x[H_\chc = H_\chd < \infty] +\ov{e}_\chc(y)\,P_x[H_\chc = \infty].
\end{align*}
As a result we see that
\begin{equation}\label{4.16}
\begin{split}
Q_x [Y = y|Y \in \chd]  = &\; \big(P_x[X_{H_\chc} = y|H_\chc = H_\chd < \infty]\,P_x[H_\chc = H_\chd < \infty]  
\\
&+ \; \mbox{\f $\dis\frac{e_\chc(y)}{e_{\chc}(\chd)} \,P_x[H_{\chc} = \infty] \,\ov{e}_\chc(\chd)\big)$}
\\[1ex]
\times &\;  \big(\dsl_{y' \in \check{D}} \mbox{the same terms with $y'$ in place of $y\big)^{-1}$}.
\end{split}
\end{equation}

\n
We can apply Proposition \ref{prop1.5} with the choice $A = \chd - z$ and $B = \chc - z$, if $\chd = \chd_z$ (note in particular that by (\ref{3.1}), (\ref{3.2}), $\IZ^d \backslash (\chc - z)$ is connected). When $K \ge c(\delta)$, we thus find that for all $x \in \partial \chv$, $y' \in \chd$, $P_x[H_{H_\chc} = y'| H_\chc = H_\chd < \infty]$ and $\frac{e_\chc(y')}{e_\chc(\chd)}$ lie between $(1 - \frac{\delta}{10}) \,\ov{e}_\chd(y')$ and $(1 + \frac{\delta}{10}) \,\ov{e}_\chd(y')$, see (\ref{1.25}), (\ref{1.26}). Inserting these bounds inside (\ref{4.16}), and using that $1 - \frac{\delta}{3} \le \frac{1 - \delta/10}{1 + \delta/10}$, as well as $1 + \frac{\delta}{3} \ge \frac{1 + \delta/10}{1 - \delta/10}$, we see that (\ref{4.5}) holds. This proves (\ref{4.11}).

\medskip
We now turn to the proof of (\ref{4.12}). We assume that $K \ge c_6(\delta)$ so that (\ref{4.5}) holds. Hence, we have (\ref{4.7}), (\ref{4.8}) on $\wt{G}_B \subseteq \wt{U}^{m_0}_\chd$ (recall that presently $m_0 = [(\log L)^2] + 1$). We thus find that when $m \ge m_0$
\begin{equation}\label{4.17}
\{\wt{Z}_1^\chd,\dots,\wt{Z}^\chd_{(1 - \delta)m}\} \subseteq \{Z^\chd_1,\dots,Z^\chd_{(1 + 3 \delta)m}\} \subseteq \big\{\wt{Z}^\chd_1,\dots,\wt{Z}^\chd_{\frac{(1 + 4 \delta)^2}{(1 - \delta)}m} \,\big\},
\end{equation}
where, for the last inclusion, we have used that $m' = [\frac{[(1 + 3 \delta) m]}{1 - \delta}] + 1$ ($\ge m_0$) satisfies $(1 - \delta) m' \ge [(1 + 3 \delta)m]$ and $m' \le \frac{(1 + 4 \delta)}{1-\delta}m$, since $m \delta \ge m_0 \,\delta \ge 1$, so that $( 1 + 3 \delta) \,m' \le \frac{(1+ 4 \delta)^2}{1 - \delta} \,m$.

\medskip
We will first prove (\ref{4.12}) i). By the definition (\ref{4.10}), when $(1 - \delta)m \ge m_0$, and $\delta m_0 \ge 1$, on $\wt{G}_B$ the set of excursions on the left-hand side of (\ref{4.17}) contains at least $(1 - \kappa)[(1-\delta)m] \,\frac{{\rm cap}(D)}{{\rm cap}(\chd)} \ge (1-\kappa)(1 - 2 \delta) m \;\frac{{\rm cap}(D)}{{\rm cap}(\chd)} \stackrel{\rm def}{=} t$ excursions from $D$ to $\partial U$, and the set of excursions on the right-hand side of (\ref{4.17}) contains no more than $[(1 + \kappa) \,\frac{(1 + 4 \delta)^2}{1 - \delta} \,m\frac{{\rm cap}(D)}{{\rm cap}(\chd)}] \le (1+\wt{\delta})\,t$  excursions from $D$ to $\partial U$, where $1 +\wt{\delta} = \frac{1 + \kappa}{1 - \kappa} \;\frac{(1 + 4 \delta)^2}{(1- \delta)(1 - 2 \delta)}$.

\medskip
Hence, looking at the ``first $t$ excursions'' $\wh{Z}^D_\ell$, $1 \le \ell \le t$ (which are inscribed in the set of excursions on the left-hand side of (\ref{4.17})) and at the ``first $(1 + \wt{\delta})\,t$ excursions'' $Z^D_\ell$, $1 \le \ell \le (1 + \wt{\delta}) \,t$ (which exhaust all excursions from $D$ to $\partial U$ inscribed within the set of excursions in the middle of (\ref{4.17})), we see that on $\wt{G}_B$, when $(1- \delta)  m \ge m_0$ and $\delta m_0 \ge 1$,
\begin{equation}\label{4.18}
\{\wh{Z}^D_1, \dots,\wh{Z}^D_t\} \subseteq \{Z^D_1,\dots, Z^D_{(1 + \wt{\delta})t}\}.
\end{equation}

\n
As $m$ varies over the range $(1-\delta) m \ge m_0$, note  that $[t]$ covers in particular all integers with value bigger or equal to $m_0$, and when $[t] \ge m_0$, then, since $\delta[t] \ge \delta m_0 \ge 1$, 
\begin{equation*}
(1 + \wt{\delta}) \,t \le (1 + \wt{\delta})([t] + \delta [t]) = (1 + \wt{\delta})(1 + \delta)[t] \le (1 + \wh{\delta}) [t],
\end{equation*}
where we used that $1 + \delta \le (1- \delta)/(1 - 2 \delta)$ in the last step. Thus (\ref{4.12}) i) follows from (\ref{4.18}). Similarly, for (\ref{4.12}) ii), we can look at the ``first $t$ excursions'' $Z^D_\ell$, $1 \le \ell \le t$, from $D$ to $\partial U$ (which are inscribed in the set of excursions in the middle of (\ref{4.17})), and at the ``first $(1 + \wt{\delta})\,t$'' excursions $\wh{Z}^D_\ell$, $1 \le \ell \le (1 + \wt{\delta})\,t$, from $D$ to $\partial U$ (which exhaust all excursions from $D$ to $\partial U$ inscribed within the set of excursions on the right-hand side of (\ref{4.17})), to infer from (\ref{4.17}) that on $\wt{G}_B$, when $(1 - \delta) \,m \ge m_0$ and $m_0 \delta \ge 1$, we have
\begin{equation}\label{4.19}
\{Z^D_1,\dots,Z^D_t\} \subseteq \{\wh{Z}^D_1,\dots, \wh{Z}^D_{(1 + \wt{\delta})t}\},
\end{equation}
and conclude in the same fashion that (\ref{4.12}) ii) holds.

\medskip
Of course by the same reasoning as above, with $D'$ in place of $D$, we obtain (\ref{4.13}) i) and ii).

\medskip
We now turn to the proof of (\ref{4.15}). It is plain from the definition of $\wt{G}_B$ in (\ref{4.10}) and from (\ref{4.4}) that the probability in (\ref{4.15}) does not depend on the choice of $\cC$ satisfying (\ref{3.1}) and of $B$ in $\cC$. We will first bound $\IQ^\cC((\wt{U}_\chd^{m_0})^c)$, see (\ref{4.6}) (with $m_0 = [(\log L)^2] + 1$). Note that $n_{\chd}(a,b) = n_{\chd}(0,b) - n_\chd(0,a)$, for $0 \le a \le b$, and since $n_\chd(0,t), t \ge 0$, is a Poisson counting function of unit intensity (see (\ref{4.4})), it follows from a standard exponential Chebyshev estimate that
\begin{equation}\label{4.20}
\overline{\lim\limits_L} \;\mbox{\f $\dis\frac{1}{m_0}$} \;\log \IQ^\cC\big((U^{m_0}_\chd)^c\big) \le - c(\delta) < 0.
\end{equation}
We now control the $\IQ^\cC$ probability of the complement of the event that appears after the intersection on the first line of (\ref{4.10}). We will use the following simple fact:

\begin{lemma}\label{lem4.2} $(L \ge 1, K \ge c)$
\begin{equation}\label{4.21}
\mbox{\f $\dis\frac{{\rm cap}(D)}{{\rm cap}(\chd)}$} \ge P_{\ov{e}_\chd} [H_D < T_\chu] \ge \mbox{\f $\dis\frac{{\rm cap}(D)}{{\rm cap}(\chd)}$} - \mbox{\f $\dis\frac{c}{K^{d-2}}$} \ge \mbox{\f $\dis\frac{{\rm cap}(D)}{{\rm cap}(\chd)}$}\;\Big(1 -  \mbox{\f $\dis\frac{c'}{K^{d-2}}$}\Big).
\end{equation}
\end{lemma}

\begin{proof}
By the sweeping identity (\ref{1.11}), we have
\begin{equation}\label{4.22}
{\rm cap}(D) = P_{e_\chd}[H_D < \infty] = P_{e_{\chd}}[H_D < T_\chu] + P_{e_\chd}[T_{\chu} < H_D < \infty].
\end{equation}

\n
The first inequality of (\ref{4.21}) follows by dividing the above equalities by ${\rm cap}(\chd)$. To obtain the last two inequalities of (\ref{4.21}), we observe that it follows from (\ref{4.22}) that
\begin{align*}
P_{e_{\chd}}[H_D < T_{\chu}] = & \;{\rm cap}(D) - P_{e_{\chd}} [T_{\chu} < H_D < \infty]
\\[1ex]
\ge &\; {\rm cap}(D) - {\rm cap}(\chd)\;\sup\limits_{\partial \check{U}} P_x[H_D < \infty]
\\[0.5ex]
\ge &\; {\rm cap}(D) - {\rm cap}(\chd) \;\mbox{\f $\dis\frac{c}{K^{d-2}}$} \;\; \mbox{(by (\ref{1.10}), (\ref{1.8}), (\ref{2.9}))}
\\[1ex]
\ge &\; {\rm cap}(D) \Big(1 - \mbox{\f $\dis\frac{c'}{K^{d-2}}$}\Big) \;\; \mbox{(by (\ref{1.8}) and (\ref{2.9}))}.
\end{align*}
Dividing by ${\rm cap}(\chd)$ we obtain the last two inequalities of (\ref{4.21}).
\end{proof}

\medskip
We now resume the bound on the $\IQ^\cC$-probability of the complement of the event after the intersection on the first line of (\ref{4.10}). We first control $\IQ^\ell (\cup_{m \ge m_0} F_m)$, where 
\begin{equation*}
\mbox{$F_m = \{\wt{Z}^{\chd}_1,\dots,\wt{Z}^\chd_m$ contains fewer than $(1 - \kappa)\;\mbox{\f $\dis \frac{{\rm cap}(D)}{{\rm cap}(\chd)}$} \,m$ excursions from $D$ to $\partial U\}$}.
\end{equation*}

\n
We note that the $\wt{Z}^\chd_k$, $k \ge 1$, are i.i.d. and each contain at least one excursion from $D$ to $\partial U$ with probability $p = P_{\ov{e}_\chd} [H_D < T_{\chu}]$. Thus, by (\ref{4.21}), we can assume that for $K \ge c(\kappa)$,
\begin{equation}\label{4.23}
p \ge \Big(1 - \mbox{\f $\dis\frac{\kappa}{2}$}\Big) \; \mbox{\f $\dis \frac{{\rm cap}(D)}{{\rm cap}(\chd)}$} \;.
\end{equation}
Hence, for any $m \ge 1$, with $Y_i, i \ge 1$, i.i.d. Bernoulli variables with success probability $p$,
\begin{equation}\label{4.24}
\begin{split}
\IQ^\cC[F_m] & \le P\Big[\dsl^m_{i=1} Y_i \le (1- \kappa) \; \mbox{\f $\dis \frac{{\rm cap}(D)}{{\rm cap}(\chd)}$} \;m\Big]
\\
& \le \;\exp\{- m \,I(\wt{p})\}, \;\;\mbox{by standard exponential Chebyshev bounds},
\end{split}
\end{equation}
where we have set
\begin{equation}\label{4.25}
\left\{ \begin{array}{l}
\wt{p} = (1 - \kappa) \; \mbox{\f $\dis \frac{{\rm cap}(D)}{{\rm cap}(\chd)}$} \; (< p \;\mbox{by (\ref{4.23})) and}
\\[2ex]
I(a) = a \,\log \Big(\mbox{\f $\dis\frac{a}{p}$}\Big) + (1 - a) \,\log \,\Big(\mbox{\f $\dis\frac{1-a}{1-p}$}\Big), \; \mbox{for $0 \le a \le 1$}.
\end{array}\right.
\end{equation}
Note that
\begin{equation}\label{4.26}
\begin{split}
I(p) = 0 = & \;\mbox{$I'(p)$ and $I''(a) = \mbox{\f $\dis\frac{1}{a}$} + \mbox{\f $\dis\frac{1}{1-a}$}$, for $0 < a < 1$, so that}
\\[1ex]
I(\wt{p})  = & \dil^p_{\wt{p}} - I'(r)dr = \dil^p_{\wt{p}} dr \,\dil^p_r \mbox{\f $\dis\frac{1}{t}$} + \mbox{\f $\dis\frac{1}{1-t}$} \;dt = \dil_{\wt{p} < r < t < p} \;\mbox{\f $\dis\frac{1}{t}$} + \mbox{\f $\dis\frac{1}{1-t}$} \;dt\,dr
\\[1ex]
 \ge& \;c(p-\wt{p})^2 \ge c'\,\kappa^2 .
\end{split}
\end{equation}
It thus follows that when $K \ge c(\kappa)$, then
\begin{equation}\label{4.27}
\IQ^\ell \big[\textstyle \bigcup\limits_{m \ge m_0} F_m\big] \le \dsl_{m \ge m_0} e^{-c' \kappa^2 m} = (1 - e^{-c' \kappa^2})^{-1} \,e^{-c' \kappa^2 m_0} .
\end{equation}

\n
We now continue to bound the $\IQ^\cC$-probability of the complement of the event after the intersection on the first line of (\ref{4.1}). We will now bound $\IQ^\cC[\bigcup_{m \ge m_0} H_m]$, where
\begin{equation*}
\mbox{$H_m = \Big\{\wt{Z}^\chd_1,\dots,\wt{Z}^\chd_m$ contain more than $(1 + \kappa) \;\mbox{\f $\dis \frac{{\rm cap}(D)}{{\rm cap}(\chd)}$} \;m$ excursions from $D$ to $\partial U\Big\}$.}
\end{equation*}

\n
Observe that any $\wt{Z}^\chd_\ell$ is distributed as the continuous-time simple random walk with starting distribution $\ov{e}_\chd$, stopped when exiting $\chu$. Hence, the number of excursions from $D$ to $\partial U$ that $\wt{Z}^\chd_\ell$ contains is stochastically dominated by the number $N_{D,U}$ of excursions from $D$ to $\partial U$, see (\ref{1.30}), for the simple random walk with starting distribution $\ov{e}_\chd$. Moreover, setting $p = \inf_{x \in \partial U} P_x [H_D = \infty] \ge 1 - \frac{c}{K^{d-2}}$ (see (\ref{1.18})), we find that for $\lambda > 0$ such that $e^\lambda( 1- p) < 1$,
\begin{equation}\label{4.28}
\begin{split}
E_{\ov{e}_\chd} [e^{\lambda N_{D,U}}] & = P_{\ov{e}_\chd} [H_D = \infty] + E_{\ov{e}_\chd} \big[H_D < \infty, E_{X_{H_D}} [e^{\lambda N_{D,U}}]\big]
\\
& = \; 1 + E_{\ov{e}_\chd}\big[H_D < \infty, E_{X_{H_D}}[e^{\lambda N_{D,U}}]-1\big]
\\
&\!\!\! \stackrel{(\ref{1.31})}{\le} 1 + P_{\ov{e}_\chd} [H_D < \infty] \Big(\mbox{\f $\dis\frac{e^\lambda p}{1 - e^\lambda(1-p)}$} - 1\Big)
\\ 
&\! \stackrel{}{=}\;  1 + \mbox{\f $\dis \frac{{\rm cap}(D)}{{\rm cap}(\chd)}$} \; \mbox{\f $\dis\frac{e^\lambda -1}{1 - e^\lambda(1-p)}$} , \;\;\mbox{with $p \ge 1 - \mbox{\f $\dis\frac{c}{K^{d-2}}$}$} \,.
\end{split}
\end{equation}

\n
Then, by (\ref{4.28}) and an exponential Chebyshev estimate, we see that for $K \ge \wt{c}$, $0 < \lambda < c' \log K$ (so that $e^\lambda (1-p) < 1$), and $m \ge 1$,
\begin{equation}\label{4.29}
\begin{split}
\IQ^\cC[H_m] & \le \exp\Big\{- m\Big[\lambda (1 + \kappa) \;\mbox{\f $\dis \frac{{\rm cap}(D)}{{\rm cap}(\chd)}$} - \log \Big( 1 + \mbox{\f $\dis \frac{{\rm cap}(D)}{{\rm cap}(\chd)}$} \;\mbox{\f $\dis\frac{e^\lambda -1}{1 - e^\lambda(1-p)}$}\Big)\Big]\Big\}
\\
&\!\!\!\!\!\!\!\! \stackrel{\log(1+a) \le a}{\le} \exp\Big\{- m \; \mbox{\f $\dis \frac{{\rm cap}(D)}{{\rm cap}(\chd)}$}  \; \Big[\lambda (1 + \kappa) - \mbox{\f $\dis \frac{e^\lambda -1}{1 - e^\lambda (1-p)}$}\Big]\Big\}. 
\end{split}
\end{equation}

\n
If $K \ge c(\kappa)$ we can pick $\lambda = \log (1 + \frac{\kappa}{2})$ and assume that $1 - (1 + \frac{\kappa}{2})$ $\frac{c}{K^{d-2}} \ge \frac{1 + \kappa/2}{1 + \kappa}$ with $c$ as in the last line of (\ref{4.28}). It now follows that
\begin{equation*}
\begin{array}{l}
\lambda(1 + \kappa) - \mbox{\f $\dis\frac{e^\lambda -1}{1-e^\lambda(1-p)}$} \ge (1 + \kappa) \Big[\log \Big(1 + \mbox{\f $\dis\frac{\kappa}{2}$}\Big) - \mbox{\f $\dis\frac{\kappa/2}{1 + \kappa/2}$}\Big] = 
\\[2ex]
(1 + \kappa) \dil^{\kappa/2}_0 \; \mbox{\f $\dis\frac{1}{1+u}$} - \mbox{\f $\dis\frac{1}{1+\kappa/2}$}\;du \stackrel{\rm def}{=} \psi(\kappa) > 0.
\end{array}
\end{equation*}

\n
Since ${\rm cap}(D)/{\rm cap}(\chd) \ge c$, coming back to (\ref{4.29}), we see that for $K \ge c(\kappa)$,
\begin{equation}\label{4.30}
\IQ^\cC \big[\textstyle \bigcup\limits_{m \ge m_0} H_m\big] \le \dsl_{ m \ge m_0} e^{-c \psi(\kappa)m} = (1 - e^{-c \psi(\kappa)})^{-1} \,e^{-c \psi(\kappa)m_0} .
\end{equation}

\n
Now (\ref{4.27}) and (\ref{4.30}) are more than enough to show that when $K \ge c(\kappa)$, the $\IQ^\cC$-probability of the complement of the event after the intersection on the first line of (\ref{4.10}) decays super-polynomially in $L$ (recall $m_0 = [(\log L)^2] + 1$). The $\IQ^\cC$-probability of the complement of the event on the last two lines of (\ref{4.10}) (relative to excursions from $D'$ to $\partial U')$ is handled in the same fashion. Keeping in mind (\ref{4.20}), this completes the proof of (\ref{4.15}), and hence of Proposition \ref{prop4.1}.
\end{proof}

\begin{remark}\label{rem4.3} \rm 
The interest of the ``favorable'' events $\wt{G}_B$ is threefold. By construction these events are independent under $\IQ^\cC$ as $B$ varies over $\cC$ (see (\ref{4.4})). When $K\ge c_7(\delta, \kappa)$, for large $L$, the events $\wt{G}_B$ are very likely (see (\ref{4.15})). And finally, on $\wt{G}_B$ we can control the excursions from $D$ to $\partial U$ and $D'$ to $\partial U'$ in the random interlacements, via (\ref{4.12}), (\ref{4.13}). This feature will be very handy in the next section. \hfill $\square$
\end{remark}

\section{Super-exponential decay}
\setcounter{equation}{0}

In this section we relate the scale $L$ governing the boxes introduced in Section 2 to the scale $N$ of basic interest that appears in the main statement (\ref{0.6}). Theorem \ref{theo6.3} of the next section will prove this main statement. The present section contains an important preparation. We introduce certain columns of $L$-boxes going from $B_N$ to $S_N$ (see (\ref{0.3}) for notation), and show in Theorem \ref{theo5.1} that given $\alpha > \beta > \gamma$ smaller than the critical value $\ov{u}$ (see (\ref{2.3})), except on an event of super-exponentially decaying probability at rate $N^{d-2}$, only a small fraction of columns contain a bad$(\alpha, \beta,\gamma)$ box (see (\ref{2.11})~-~(\ref{2.13})). This super-exponential estimate relies on the soft local time couplings constructed in Section 4. Theorem \ref{theo5.1} roughly plays the role of Proposition 5.4 of \cite{Szni} in the context of level-set percolation for the Gaussian free field. However, whereas the various independence properties used in the proof of Proposition 5.4 of \cite{Szni} were easy facts pertaining to the built-in independence of the local fields of \cite{Szni}, here, our main tools will come from Proposition \ref{prop4.1} and Remark \ref{rem4.3}.

\medskip
We now relate the parameter $L$ to the parameter $N$ entering (\ref{0.3}) (and the main upper-bound (\ref{0.6}) we are aiming at). To this end, we introduce a large $\Gamma \ge 1$ and set
\begin{equation}\label{5.1}
L = [(\Gamma N \log N)^{\frac{1}{d-1}}] \quad \mbox{(we recall that $\IL = L \IZ^d$, see (\ref{2.6}))}.
\end{equation}
We also consider $K \ge 100$.

\medskip
We still need to introduce additional notation and definitions, in particular, regarding columns. Given $e \in \IZ^d$, with $|e| = 1$, and $N > 1$, we denote by $F_{e,N}$ the face in the direction $e$ of $B_N$, that is $F_{e,N} = \{x \in B_N; x \cdot e = N\}$. For each face we consider the set of columns (attached to the face), where a column consists of $L$-boxes $B$ contained in $\{z \in \IZ^d; x \cdot e > N\} \cap B_{(M+1)N}$, with same projection in the $e$-direction on $\{x \in \IZ^d; x \cdot e = N\}$, which we require to be contained in the face $F_{e,N}$ of $B_N$ (we recall that $M > 1$ is an arbitrary number, that $S_N = \{x \in \IZ^d; |x|_\infty = [MN]\}$, see (\ref{0.3}), and that to be an $L$-box means that $B = B_z$, with $z \in \IL$, see (\ref{2.10})). Incidentally, note that for large $N$ the total number of columns is bounded by $c(\frac{N}{L})^{d-1} \le \frac{c'}{\Gamma} \;\frac{N^{d-2}}{\log N}$.

\medskip
We now consider $\ov{u}$ from (\ref{2.3}) as well as
\begin{equation}\label{5.2}
\alpha > \beta > \gamma \; {\rm in} \; (0,\ov{u}).
\end{equation}
    
\medskip\n
We view $\delta, \kappa$ in $(0,\frac{1}{2})$ as functions of $\alpha, \beta, \gamma$ such that       
\begin{equation}\label{5.3}
\ov{u} > ( 1 + \wh{\delta})^2 \alpha, \; \mbox{\f $\dis\frac{\alpha}{(1 + \wh{\delta})^2}$} > (1 + \wh{\delta})^2\beta, \; \mbox{and}  \; \mbox{\f $\dis\frac{\beta}{(1 + \wh{\delta})^2}$} > \gamma,
\end{equation}  

\n
with $\wh{\delta}$ as (\ref{4.14}). We now define (see (\ref{2.11}) - (\ref{2.13}) and (\ref{4.10}) for notation)      
\begin{equation}\label{5.4}
\begin{split}
\eta = & \;\mbox{$\IP\big[B$ is bad$\big((1 + \wh{\delta})^2 \alpha, \beta,\gamma\big)\big] + \IP\Big[B$ is bad$\Big(\mbox{\f $\dis\frac{\alpha}{(1 + \wh{\delta})^2}$}, (1 +  \wh{\delta})^2 \beta, \alpha\Big)\Big] \; + $}
\\[-1ex]
& \; \mbox{$\IP[B$ is bad$\Big(\alpha ,\, \mbox{\f $\dis\frac{\beta}{(1 + \wh{\delta})^2}$}, \gamma\Big)\Big] + 3 \IQ^\cC[\wt{G}^c_B]$}
\end{split}
\end{equation}
\n
(the last probability does not depend on $\cC$ satisfying (\ref{3.1}), nor on $B$ in $\cC$, see below (\ref{4.15})).

\medskip
We assume from now on that $K \ge c_8(\alpha, \beta,\gamma)$, so that Theorem \ref{theo2.3} applies to each of the first three probabilities in the right-hand side of (\ref{5.4}) and (\ref{4.15}) of Proposition \ref{prop4.1} applies (we recall that we view $\delta$ and $\kappa$ as functions of $\alpha,\beta,\gamma$). We thus see that
\begin{equation}\label{5.5}
\lim\limits_{L \r \infty} \;\mbox{\f $\dis\frac{1}{\log L}$} \; \log \eta = - \infty,
\end{equation}
i.e. $\eta$ tends to zero super-polynomially in $L$. We can then define
\begin{equation}\label{5.6}
\rho = \sqrt{\mbox{\f $\dis\frac{\log L}{\log \frac{1}{\eta}}$}} \underset{L \r \infty}{\longrightarrow} 0.
\end{equation}
 
\medskip \n
We will see that the following event is ``negligible'' for our purpose. Namely, we set
\begin{equation}\label{5.7}
\mbox{$C_N = \big\{$there are at least $\rho\big(\frac{N}{L}\big)^{d-1}$ columns containing a bad$(\alpha,\beta,\gamma)$ box$\big\}$}.
\end{equation} 
Here is the main result of this section. Recall that by (\ref{5.2}) $\alpha > \beta > \gamma $ are in $(0,\ov{u})$.

\begin{theorem}\label{theo5.1} (super-exponential bound)

\medskip
When $K \ge c_8 (\alpha,\beta,\gamma)$,
\begin{equation}\label{5.8}
\lim\limits_N \; \mbox{\f $\dis\frac{1}{N^{d-2}}$} \; \log \IP [C_N] = - \infty .
\end{equation} 
\end{theorem}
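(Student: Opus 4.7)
The plan is to combine the soft local time coupling of Section~4 (Proposition~\ref{prop4.1} and Remark~\ref{rem4.3}) with the independence structure (\ref{4.9}) across well-separated boxes, after first extracting from $C_N$ a sub-collection $\cC$ of bad $L$-boxes satisfying (\ref{3.1}), and to finish via a union bound in which the choice of $\rho$ in (\ref{5.6}) balances the combinatorial and the product contributions. First, on $C_N$, I would select one bad$(\alpha,\beta,\gamma)$ box per ``bad column'' (say the one closest to $F_{e,N}$); since the columns are indexed by cells of side $L$ in $F_{e,N}$, a residue-mod-$\ov K$ pigeonhole on the $d-1$ face coordinates produces a sub-collection $\cC \subseteq \IL$ of cardinality $|\cC| \ge c\,\rho\,(N/L)^{d-1}$ whose points are pairwise at $\ell^\infty$-distance $\ge \ov K L$, so that (\ref{3.1}) holds.

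For a fixed admissible $\cC$ I would invoke the coupling $\IQ^\cC$, set $F_z = \wt G_{B_z}^c$, and use the inclusions (\ref{4.12})--(\ref{4.13}) on $\wt G_{B_z}$ to check that if $B_z$ is bad$(\alpha,\beta,\gamma)$ under the interlacement excursions $Z$, then at least one of three explicit $\wh Z$-events $E_z^{(i)}$ ($i = 1,2,3$, corresponding to failure of (\ref{2.11}), (\ref{2.12}), (\ref{2.13}) at $(1+\wh\delta)^{\pm 1}$-shifted parameters) is realised. Setting $E_z = \bigcup_i E_z^{(i)}$, this yields the crucial inclusion $\{B_z\text{ bad}(\alpha,\beta,\gamma)\} \subseteq E_z \cup F_z$ for every $z \in \cC$, since on $\wt G_{B_z}$ we lie in $E_z$ and on $F_z$ otherwise. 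It is precisely this inclusion --- rather than the cruder $\{\text{all bad}\}\subseteq\{\text{all bad under }\wh Z\}\cup\wt G^c$ --- which will avoid a lossy additive $|\cC|\,\IQ^\cC[\wt G_B^c]$ term at the end.

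By (\ref{4.9}) and the independence of the auxiliary Poisson processes and i.i.d.\ excursions in (\ref{4.4}), the pairs $(E_z, F_z)$ are jointly independent across $z \in \cC$. I would then bound each $\IQ^\cC[E_z^{(i)}]$ by reversing the coupling: on $\wt G_{B_z}$, a $\wh Z$-failure at first-shift forces a $Z$-failure at the corresponding doubly-shifted parameters, which are precisely the three triples appearing in (\ref{5.4}); summing over $i$ gives $\IQ^\cC[E_z] \le \eta$, and with $\IQ^\cC[F_z] \le \eta/3$ one obtains $\IQ^\cC[E_z \cup F_z] \le \tfrac{4}{3}\eta$. Independence then yields
\begin{equation*}
\IP\big[\textstyle\bigcap_{z \in \cC} \{B_z \text{ is bad}(\alpha,\beta,\gamma)\}\big]\ \le\ \prod_{z \in \cC} \IQ^\cC[E_z \cup F_z]\ \le\ \big(\tfrac{4}{3}\eta\big)^{|\cC|}.
\end{equation*}
Union-bounding over the at most $\exp\{c|\cC|\log(N/L)\}$ such $\cC$ and using $|\cC| \asymp \rho\,N^{d-2}/\log N$ from (\ref{5.1}) then gives $\tfrac{1}{N^{d-2}}\log \IP[C_N] \le c\,\rho - c'\,\rho\,\log(1/\eta)/\log N$; the choice $\rho = \sqrt{\log L/\log(1/\eta)}$ from (\ref{5.6}) makes the second term $-c'\sqrt{\log(1/\eta)/\log L}$, which tends to $-\infty$ by the super-polynomial decay (\ref{5.5}) of $\eta$, while $\rho \to 0$; this proves (\ref{5.8}).

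The delicate part will be the bidirectional bookkeeping in the two middle paragraphs: one has to match each of the three failure modes (\ref{2.11})--(\ref{2.13}) with the correct direction in (\ref{4.12})--(\ref{4.13}) (once going $Z \to \wh Z$, then $\wh Z \to Z$), so that the composition of the two $(1+\wh\delta)^{\pm 1}$-shifts lands precisely on the three parameter-triples appearing in (\ref{5.4}); assumption (\ref{5.3}) on $\delta,\kappa$ is designed exactly to guarantee that these doubly-shifted parameters remain in the admissible range $(0,\ov u)$ where the super-polynomial estimates of Theorem~\ref{theo2.3} apply.
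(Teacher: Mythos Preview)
Your argument is correct, but it follows a genuinely different route from the paper's.

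The paper does \emph{not} union-bound over selected sub-collections $\cC$. Instead it partitions $\IL$ into the $\ov K^d$ residue classes $y+\ov K\IL$, and for each class takes $\cC_y$ to be the \emph{deterministic} collection of all $L$-boxes in some column with that residue. On $C_N$ one of these classes must contain at least $\tfrac{\rho}{\ov K^d}(N/L)^{d-1}$ bad boxes, and the paper then splits
\[
\IP[C_N]\;\le\;\sum_y\Big(\underbrace{\IQ^{\cC_y}[\text{at least }k\text{ boxes have }\wt G_B^c]}_{A_1}\;+\;\underbrace{\IQ^{\cC_y}[\text{at least }k\text{ are bad and }\wt G_B\text{ holds}]}_{A_2}\Big).
\]
For $A_1$ the events $\wt G_B^c$ are i.i.d.\ Bernoulli by (\ref{4.4}); for $A_2$ the paper passes (as you do) to the $\wh Z$-events $\wh a_B\cup\wh b_B\cup\wh c_B$ of (\ref{5.13})--(\ref{5.15}), which are independent by (\ref{4.9}), and then bounds each of their probabilities by $\eta$ via the reverse coupling (\ref{5.17})--(\ref{5.20}). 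Both $A_1$ and $A_2$ are thus binomial-tail probabilities handled by an exponential Chebyshev estimate.

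Your approach instead extracts, \emph{after} seeing which boxes are bad, a well-separated sub-collection $\cC$ of size $\ge c\rho(N/L)^{d-1}$, union-bounds over all such $\cC$, and for fixed $\cC$ uses the product bound $\prod_{z\in\cC}\IQ^\cC[E_z\cup F_z]\le(\tfrac{4}{3}\eta)^{|\cC|}$. The point you emphasise---that $(E_z,F_z)$ is $\sigma((n_{\chd_z},\wt Z^{\chd_z}_{\cdot}))$-measurable, so the unions $E_z\cup F_z$ are themselves independent across $z\in\cC$---is what lets you avoid the paper's $A_1/A_2$ dichotomy; this is a clean observation. The price is the combinatorial factor $\binom{c(N/L)^d}{|\cC|}$, which you correctly balance against $\eta^{|\cC|}$ using the choice (\ref{5.6}) of $\rho$; note that the $\log(1/\rho)$ contribution to $\log\binom{c(N/L)^d}{|\cC|}$ is $o(\log N)$ and harmless. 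One minor point: since columns are attached to $2d$ different faces, your pigeonhole on ``the $d{-}1$ face coordinates'' should be preceded by a pigeonhole on the face; this costs only a factor $2d$. In the end both routes feed the same two-way coupling bookkeeping (\ref{4.12})--(\ref{4.13}) into the same super-polynomial bound (\ref{5.5}), and yield the same super-exponential decay.
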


\begin{proof}
We write $\IL ( = L \IZ^d)$ as the disjoint union of the sets $y + \ov{K} \,\IL$, where $y$ varies over $\{0,L,2L,\dots, (\ov{K} - 1)L\}^d$ and $\ov{K} = 2 K + 3$ (see (\ref{3.1})). We then find that (in the sum below,   $y$ ranges over the set just mentioned)  
\begin{equation*}
\begin{split}
\IP[C_N] \le  \dsl_y\; \IP\Big[ & \mbox{at least $\mbox{\f $\dis\frac{\rho}{\ov{K}^d}$}\; \Big(\mbox{\f $\dis\frac{N}{L}$}\Big)^{d-1}$ boxes $B_z$, with $B_z$ in some column, and}
\\[-0.5ex]
&\mbox{$z \in y + \ov{K}\IL$ are bad$(\alpha,\beta,\gamma)\Big]$}.
\end{split}
\end{equation*}

\n
Given any $y \in \{0,L, 2L , \dots, (\ov{K} - 1)L\}^d$, we denote by $\cC_y$ the collection of $L$-boxes $B_z$, with $z \in y + \ov{K} \IL$ that are contained in some column. When $N$ is large, $\cC_y$ is non-empty and satisfies (\ref{3.1}), and we consider the probability $\IQ^{\cC_y}$ (see below (\ref{4.2})). We find that in the notation of (\ref{4.10})
\begin{equation}\label{5.9}
\begin{split}
\IP[C_N]  \le &\; \dsl_y \IQ^{\cC_y}\big[\mbox{at least $\mbox{\f $\dis\frac{\rho}{2 \ov{K}^d}$} \big(\mbox{\f $\dis\frac{N}{L}$}\big)^{d-1}$ boxes $B$ in $\cC_y$ are such that $\wt{G}^c_B$ holds$\big]$} \; +
\\
& \;\dsl_y \IQ^{\cC_y}\big[\mbox{at least $\mbox{\f $\dis\frac{\rho}{2 \ov{K}^d}$} \big(\mbox{\f $\dis\frac{N}{L}$}\big)^{d-1}$ boxes $B$ in $\cC_y$ are bad$(\alpha, \beta, \gamma)$ and}
\\[-0.5ex] 
&\qquad \qquad  \mbox{such that $\wt{G}_B$ occurs$\big] \stackrel{\rm def}{=} A_1 + A_2$}.
\end{split}
\end{equation}

\medskip\n
We first bound $A_1$. We note that the events $\wt{G}^c_B$ are independent under $\IQ^{\cC_y}$, as $B$ varies over $\cC_y$, see Remark \ref{rem4.3}, and identically distributed (see below (\ref{4.15})), so that
\begin{equation}\label{5.10}
A_1 \le \ov{K}^d \, P\Big[\dsl^m_1 Y_i \ge \mbox{\f $\dis\frac{\rho}{2 \ov{K}^d}$} \;\big(\mbox{\f $\dis\frac{N}{L}$}\big)^{d-1}\Big],
\end{equation}

\n
where $Y_i$, $i \ge 1$, are i.i.d. Bernoulli variables with success probability $\eta \ge \IQ^{\cC_y}[\wt{G}^c_B]$ (by the choice of $\eta$ in (\ref{5.4})), and $m$ denote the total number of boxes in all columns), so that for large $N$,
\begin{equation}\label{5.11}
c\Big(\mbox{\f $\dis\frac{N}{L}$}\Big)^d \le m \le c' \Big(\mbox{\f $\dis\frac{MN}{L}$}\Big)^d.
\end{equation}
Then, using the super-polynomial decay in $L$ of $\eta$ in (\ref{5.5}), the same calculation as between (5.22) and (5.27) in Proposition 5.4 of \cite{Szni} shows that
\begin{equation}\label{5.12}
\lim\limits_N \; \mbox{\f $\dis\frac{1}{N^{d-2}}$} \;\log A_1 = - \infty.
\end{equation}

\n
We then turn to the control of $A_2$. We observe that when $B \in \cC_y$ is such that $\wt{G}_B$ holds and $B$ is bad$(\alpha, \beta, \gamma)$, then some of (\ref{2.11}), (\ref{2.12}), (\ref{2.13}) fail for $B$. So, for large $N$, either

\bigskip\n
a) $B \backslash ({\rm range} \,Z_1^D \cup \ldots \cup {\rm range} \,Z^D_{\alpha\,{\rm cap}(D)}$) does not contain a connected set with diameter at least $\frac{L}{10}$, hence by (\ref{4.12}) ii)
\begin{equation}\label{5.13}
\begin{array}{l}
\mbox{$B \backslash ({\rm range} \,\wh{Z}_1^D \cup \ldots \cup {\rm range} \,\wh{Z}^D_{\alpha(1 + \wh{\delta}){\rm cap}(D)}$) does not contain a connected set}
\\
\mbox{of diameter at least $\frac{L}{10}$},
\end{array}
\end{equation}
or

\medskip\n
b) there are connected sets with diameter at least $\frac{L}{10}$ in $B \backslash ({\rm range} \,Z_1^D \cup \ldots \cup {\rm range} \,Z^D_{\alpha\,{\rm cap}(D)}$) and $B' \backslash ({\rm range} \,Z_1^{D'} \cup \ldots \cup {\rm range} \,Z^{D'}_{\alpha\,{\rm cap}(D')}$), with $B'$ some neighboring box of $B$, which are not linked by a path in $D \backslash ({\rm range} \,Z_1^D \cup \ldots \cup {\rm range} \,Z^D_{\beta\,{\rm cap}(D)}$). Hence, by (\ref{4.12}) i) and (\ref{4.13}) i) (with $\ell = [\frac{\alpha}{(1 + \wh{\delta})} \,{\rm cap}(D)]$) and (\ref{4.12}) ii) (with $\ell = [\beta \,{\rm cap}(D)]$) we find that
\begin{equation}\label{5.14}
\begin{array}{l}
\mbox{there are connected sets of diameter at least $\mbox{\f $\dis\frac{L}{10}$}$ in} 
\\
\mbox{$B \backslash ({\rm range} \,\wh{Z}_1^D \cup \ldots \cup {\rm range} \,\wh{Z}^D_{\frac{\alpha}{1 + \wh{\delta}} \, {\rm cap}(D)}$) and}
\\[1.3ex]
\mbox{$B' \backslash ({\rm range} \,\wh{Z}_1^{D'} \cup \ldots \cup {\rm range} \,\wh{Z}^{D'}_{\frac{\alpha}{1 + \wh{\delta}} \, {\rm cap}(D)}$), with}
\\[1.3ex]
\mbox{$B'$ some neighboring box of $B$, which are not connected}
\\[0.8ex]
\mbox{by a path in $D \backslash ({\rm range} \,\wh{Z}_1^D \cup \ldots \cup {\rm range} \,\wh{Z}^D_{(1 + \wh{\delta}) \beta\, {\rm cap}(D)}$),}
\end{array}
\end{equation}
or

\medskip\n
c) we have
\begin{equation*}
\dsl_{1 \le \ell \le \beta\,{\rm cap}(D)} \; \dil^{T_U}_0 e_D \big(Z^D_\ell (s)\big)ds < \gamma \,{\rm cap}(D)
\end{equation*}

\medskip\n
so that by (\ref{4.12}) i) (with $\ell = [\frac{\beta}{(1 + \wh{\delta})} \,{\rm cap}(D)])$, we have
\begin{equation}\label{5.15}
\dsl_{1 \le \ell \le \frac{\beta}{1 + \wh{\delta}} \,{\rm cap}(D)} \; \dil^{T_U}_0 e_D \big(\wh{Z}^D_\ell(s)\big)ds < \gamma \,{\rm cap}(D).
\end{equation}

\n
As a result, we have obtained that for large $N$, and each $y$ in $\{0,L,2L,\dots,(\ov{K}-1)L\}^d$ the probability in the sum defining $A_2$ in (\ref{5.9}) satisfies
\begin{equation}\label{5.16}
\begin{split}
\IQ^{\cC_y} [&\mbox{at least $\frac{\rho}{2\ov{K}^d} \,(\frac{N}{L})^{d-1}$ boxes $B$ in $\cC_y$ are bad$(\alpha, \beta, \gamma)$ and such that}
\\ 
&\mbox{$\wt{G}_B$ occurs$]$} \le
\\[1ex]
\IQ^{\cC_y} [&\mbox{there are at least $\frac{\rho}{2\ov{K}^d} \,(\frac{N}{L})^{d-1}$ boxes $B$ in $\cC_y$ for which $\wh{a}_B \cup \wh{b}_B \cup \wh{c}_B$ holds$]$},
\end{split}
\end{equation}

\n
where $\wh{a}_B$ refers to the event in (\ref{5.13}), $\wh{b}_B$ refers to the event in (\ref{5.14}), and $\wh{c}_B$ to the event in (\ref{5.15}). By (\ref{4.9}), as $B$ varies over $\cC_y$, the events $\wh{a}_B \cup \wh{b}_B \cup \wh{c}_B$ are independent under $\IQ^{\cC_y}$.

\medskip
In addition, for large $N$, for any $y \in \{0,L, 2L, \dots, (\ov{K}-1)L\}^d$, and $B$ in $\cC_y$
\begin{equation}\label{5.17}
\begin{array}{l}
\IQ^{\cC_y} [\wh{a}_B] \le \IQ^{\cC_y} [\wt{G}^c_B] + \IQ^{\cC_y} [\wh{a}_B \cap \wt{G}_B] \le \IQ^{\cC_y} [\wt{G}^c_B] \; +
\\[1ex]
\IP[B \backslash({\rm range} \,Z^D_1 \cup \ldots \cup Z^D_{(1 + \wh{\delta})^2 \alpha \,{\rm cap}(D)}) \;\mbox{does not contain a connected set}
\\
\quad \mbox{of diameter $\ge \frac{L}{10}$]},
\end{array}
\end{equation}

\n
using (\ref{4.12}) i) in the last inequality (with $\ell = [(1 + \wh{\delta})\,{\rm cap}(D)]$).

\medskip
Similarly, by (\ref{4.12}) ii), (\ref{4.13}) ii) (with $\ell = [\frac{\alpha}{(1 + \wh{\delta})^2} \, {\rm cap}(D)]$) and (\ref{4.12}) i) (with $\ell = [(1 + \wh{\delta}) \beta \, {\rm cap}(D)]$), we find that
\begin{equation}\label{5.18}
\begin{array}{l}
\IQ^{\cC_y} [\wh{b}_B] \le \IQ^{\cC_y} [\wt{G}^c_B] + \IQ^{\cC_y} [\wh{b}_B \cap \wt{G}_B] \stackrel{(\ref{4.12}),(\ref{4.13})}{\le} \IQ^{\cC_y} [\wt{G}^c_B] \; +
\\[1ex]
\IP\big[\mbox{there are connected sets of diameter at least $\frac{L}{10}$ in}
\\[1ex]
\mbox{$B \backslash  ({\rm range} \,Z^D_1 \cup \ldots \cup Z^D_{\frac{\alpha}{(1 + \wh{\delta})^2} \, {\rm cap}(D)}$) and}
\\[1ex]
\mbox{$B' \backslash  ({\rm range} \,Z^{D'}_1 \cup \ldots \cup Z^{D'}_{\frac{\alpha}{(1 + \wh{\delta})^2}  \, {\rm cap}(D')}$) with}
\\[1ex]
\mbox{$B'$ some neighboring box of $B$, which are not connected}
\\[0.5ex]
\mbox{by a path in $D \backslash ({\rm range} \,Z_1^D \cup \ldots \cup {\rm range} \,Z^D_{(1 + \wh{\delta})^2 \beta\, {\rm cap}(D)})\big]$},
\end{array}
\end{equation}
and likewise, by (\ref{4.12}) ii) (with $\ell = [\frac{\beta}{(1 + \wh{\delta})} \,{\rm cap}(D)]$),
\begin{equation}\label{5.19}
\begin{array}{l}
\IQ^{\cC_y} [\wh{c}_B] \le \IQ^{\cC_y}[\wt{G}^c_B] +\IQ^{\cC_y} [\wh{c}_B  \cap \wt{G}_B] \stackrel{(\ref{4.12})\, {\rm ii)}}{\le} \IQ^{\cC_y} [\wh{G}^c_B] \; +
\\[1ex]
\IP \Big[\dsl_{1 \le \ell \le \frac{\beta}{( 1 + \wh{\delta})^2}\, {\rm cap}(D)} \dil^{T_U}_0 e_D\big(Z^D_\ell (s)\big) ds < \gamma \,{\rm cap}(D)\Big].
\end{array}
\end{equation}

\n
Collecting (\ref{5.17}) - (\ref{5.19}), we see that for large $N$, for any $y$ in $\{0,L,2L,\dots, (\ov{K} - 1)L)^d\}$ and $B$ in $\cC_y$ we have
\begin{equation}\label{5.20}
\begin{array}{l}
\IQ^{\cC_y} [\wh{a}_B \cup \wh{b}_B \cup \wh{c}_B] \le 3 \IQ^{\cC_y} [\wt{G}^c_B] + \IP\big[\mbox{$B$ is bad$\big((1 + \wh{\delta})^2 \alpha,\beta,\gamma\big)\big]$} \; +
\\[1ex]
\mbox{$\IP\Big[B$ is bad$\Big(\mbox{\f $\dis\frac{\alpha}{(1 + \wh{\delta})^2}$}, (1 + \wh{\delta})^2 \beta, \gamma\Big)\Big] +  \IP\Big[B$ is bad$\Big(\alpha, \mbox{\f $\dis\frac{\beta}{(1 + \wh{\delta})^2}$}, \gamma\Big)\Big] \stackrel{(\ref{5.4})}{=} \eta$}
\end{array}
\end{equation}

\medskip\n
(actually a more careful bound of the left-hand side of (\ref{5.20}) yields an inequality without the factor $3$ in the first line of (\ref{5.20}), but this is irrelevant for our purpose).

\medskip
Keeping in mind the independence stated below (\ref{5.16}), we thus find with a similar notation as in (\ref{5.10}) that
\begin{equation}\label{5.21}
A_2 \le \ov{K}^d \,\IP\Big[\dsl^m_1 Y_i \ge \mbox{\f $\dis\frac{\rho}{2 \ov{K}^d}$} \;\Big(\mbox{\f $\dis\frac{N}{L}$}\Big)^{d-1}\Big].
\end{equation}

\n
As already mentioned below (\ref{5.10}), the quantity on the right-hand side of (\ref{5.21}) has super-exponential decay at rate $N^{d-2}$, so that
\begin{equation}\label{5.22}
\lim\limits_N \; \mbox{\f $\dis\frac{1}{N^{d-2}}$} \;\log A_2 = - \infty .
\end{equation}
Combining (\ref{5.9}), (\ref{5.12}), (\ref{5.22}), we have thus completed the proof of Theorem \ref{theo5.1}.
\end{proof}

\section{Disconnection upper bounds}
\setcounter{equation}{0}

In this section, we derive in Theorem \ref{theo6.3} the main asymptotic upper bound (\ref{0.6}) on the probability that random interlacements at level $u \in (0, \ov{u})$ disconnect $B_N$ from $S_N$. The proof involves the super-exponential estimate of the previous section, the occupation-time bounds of Section 3, and a coarse-graining procedure in the spirit of the proof of \hbox{Theorem 5.5} of \cite{Szni}. Once Theorem \ref{theo6.3} is proved, we quickly obtain in Corollary \ref{cor6.4} the main upper-bound (\ref{0.8}) on the probability that simple random walk disconnects $B_N$ from $S_N$. The argument is similar to the proof of Corollary 7.3 of \cite{Szni}, and involves letting $u$ tend to $0$ in Theorem \ref{theo6.3} and a natural coupling of $\cV^u$ under $\IP[\cdot \, |\,0 \in \cI^u]$ with the complement $\cV$ of the trace on $\IZ^d$ of the simple random walk under $P_0$.

\medskip
We begin with a simple connectivity lemma that will be helpful in the proof of the main Theorem \ref{theo6.3}. We use the notation of Section 2, and for the time being $L \ge 1$, $K \ge 100$ are integers, see (\ref{2.7}), boxes are defined as in (\ref{2.9}), (\ref{2.10}), the definition of good$(\alpha, \beta, \gamma)$ for an $L$-box $B$ appears in (\ref{2.11}) - (\ref{2.13}), and $N_u(D)$ is defined in (\ref{2.14}).

\begin{lemma}\label{lem6.1} $(\alpha > \beta > \gamma > 0$ and $u > 0)$

\medskip
If $B^i$, $0 \le i \le n$, is a sequence of neighboring $L$-boxes, which are good$(\alpha, \beta, \gamma)$, and $N_u(D^i) < \beta \, {\rm cap}(D)$, for $i = 0,\dots,n$ (with $D^i$ the $D$-type box attached to $B^i$), then
\begin{equation}\label{6.1}
\begin{array}{l}
\mbox{there exists a path in $\textstyle \bigcup\limits^n_{i=0}\, (D^i \backslash ({\rm range}\,Z^{D^i}_1 \cup \ldots \cup {\rm range}\, Z^{D^i}_{\beta\, {\rm cap}(D^i)}) \subseteq$}
\\[1ex]
\mbox{$\Big( \textstyle \bigcup\limits^n_{i=0} D^i\Big) \cap \cV^u$ starting in $B^0$ and ending in $B^n$}
\end{array}
\end{equation}
(note that ${\rm cap}(D^i) = {\rm cap}(D_0)$ for all $0 \le i \le n$, in the notation of (\ref{2.9})).
\end{lemma}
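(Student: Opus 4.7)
The argument splits into two independent parts: verifying the right-hand inclusion into $\cV^u$, and constructing the path inside the $\beta$-complements.

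For the inclusion, I would note that the hypothesis $N_u(D^i) < \beta\,{\rm cap}(D)$ means all excursions from $D^i$ to $\partial U^i$ produced by trajectories of $\cI^u$ are to be found among $Z^{D^i}_1,\ldots,Z^{D^i}_{\beta\,{\rm cap}(D^i)}$. Since every trajectory in the cloud with label $\le u$ spends only finite time in $U^i$, any point of $D^i \cap \cI^u$ is visited during one of these excursions, so
\begin{equation*}
D^i \cap \cI^u \;\subseteq\; {\rm range}(Z^{D^i}_1) \cup \ldots \cup {\rm range}(Z^{D^i}_{\beta\,{\rm cap}(D^i)}).
\end{equation*}
Taking complements in $D^i$ and unions over $0 \le i \le n$ yields the desired right-hand inclusion into $(\bigcup_i D^i) \cap \cV^u$.

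For the construction of the path, I would use conditions (\ref{2.11}) and (\ref{2.12}) of the definition of good$(\alpha,\beta,\gamma)$ (condition (\ref{2.13}) plays no role here). By (\ref{2.11}), each $B^i$ contains a connected set $S^i \subseteq B^i \backslash ({\rm range}(Z^{D^i}_1) \cup \ldots \cup {\rm range}(Z^{D^i}_{\alpha\,{\rm cap}(D^i)}))$ of diameter at least $L/10$. Since $\alpha > \beta$, strictly fewer ranges are removed when one truncates at index $\beta\,{\rm cap}(D^i)$ instead of $\alpha\,{\rm cap}(D^i)$, and consequently
\begin{equation*}
S^i \;\subseteq\; D^i \backslash ({\rm range}(Z^{D^i}_1) \cup \ldots \cup {\rm range}(Z^{D^i}_{\beta\,{\rm cap}(D^i)})).
\end{equation*}
Then, for each $0 \le i < n$, applying (\ref{2.12}) to the neighboring boxes $B^i, B^{i+1}$ and to the connected sets $S^i$ and $S^{i+1}$ produces a nearest-neighbor path $\pi_i$, from $S^i$ to $S^{i+1}$, contained in $D^i \backslash ({\rm range}(Z^{D^i}_1) \cup \ldots \cup {\rm range}(Z^{D^i}_{\beta\,{\rm cap}(D^i)}))$.

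It remains to concatenate. For $1 \le i \le n-1$, the paths $\pi_{i-1}$ and $\pi_i$ both have an endpoint in $S^i$ (possibly at distinct vertices); since $S^i$ is connected and wholly contained in $D^i \backslash ({\rm range}(Z^{D^i}_1) \cup \ldots \cup {\rm range}(Z^{D^i}_{\beta\,{\rm cap}(D^i)}))$, I can splice $\pi_{i-1}$ to $\pi_i$ through a nearest-neighbor path in $S^i$ without leaving the forbidden-set complement. Iterating this splicing procedure yields a single nearest-neighbor path from a vertex of $S^0 \subseteq B^0$ to a vertex of $S^n \subseteq B^n$ entirely contained in the union $\bigcup_{i=0}^n (D^i \backslash ({\rm range}(Z^{D^i}_1) \cup \ldots \cup {\rm range}(Z^{D^i}_{\beta\,{\rm cap}(D^i)})))$, which is what is required. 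No serious obstacle arises; the only point that must be watched is the passage from the $\alpha$-complements used to define $S^i$ in (\ref{2.11}) to the larger $\beta$-complements in which the whole path must live, which is precisely where the monotonicity $\alpha > \beta$ enters.
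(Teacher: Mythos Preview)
Your proof is correct and follows essentially the same approach as the paper: use (\ref{2.11}) to produce the large connected sets $S^i$, use (\ref{2.12}) to link $S^i$ to $S^{i+1}$ inside the $\beta$-complement of $D^i$, and use $N_u(D^i) < \beta\,{\rm cap}(D)$ to embed the resulting union into $\cV^u$. The only difference is that you spell out the splicing through $S^i$ and the inclusion $D^i\cap\cI^u \subseteq \bigcup_{\ell\le\beta\,{\rm cap}(D)}{\rm range}\,Z^{D^i}_\ell$ more explicitly than the paper does.
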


\begin{proof}
Since each $B^i$ is good$(\alpha, \beta, \gamma)$, $B^i \backslash ({\rm range}\,Z^{D^i}_1 \cup \ldots \cup {\rm range}\, Z^{D^i}_{\alpha\, {\rm cap}(D^i)})$ contains a connected set with diameter at least $\frac{L}{10}$. By (\ref{2.12}) the corresponding connected sets for $i$ and $i+1$ (where $0 \le i < n$) can be linked by a path in $D^i \backslash ({\rm range}\,Z^{D^i}_1 \cup \ldots \cup {\rm range}\, Z^{D^i}_{\beta\, {\rm cap}(D^i)})$). As a result we have a path from $B^0$ to $B^n$ in $\bigcup^{n-1}_{i=0} D^i \backslash ({\rm range}\,Z^{D^i}_1 \cup \ldots \cup\, {\rm range}\, Z^{D^i}_{\beta\, {\rm cap}(D)})$, and since ${\rm cap}(D^i) = {\rm cap}(D)$ and $N_u(D^i) < \beta \,{\rm cap}(D)$, for $i = 0,\ldots,n$, this last set is contained in $\bigcup^{n-1}_{i=0} (D^i \cap \cV^u) = (\bigcup^{n-1}_{i=0} D^i) \cap \cV^u$, and the lemma is proved.
\end{proof}
           
\begin{remark}\label{rem6.2} \rm The above connectivity result plays the role of (\ref{5.14}) of Lemma 5.9 of \cite{Szni}. The notion of an $L$-box $B$ being $\psi$-good at level $\alpha, \beta$ in the context of \cite{Szni} is replaced here by $B$ is good$(\alpha, \beta, \gamma)$, whereas the notion of an $L$-box being $h$-good at level $a$ in \cite{Szni}, is replaced here by the condition $N_u(D) < \beta\, {\rm cap}(D)$. \hfill $\square$
\end{remark}   

\medskip
We recall that $M > 1$ is a real number, $S_N = \{x \in \IZ^d, |x|_\infty = [MN]\}$, and $\ov{u}$ is defined in (\ref{2.3}). We now come to the main result.

\begin{theorem}\label{theo6.3} Assume that $0 < u < \ov{u}$. Then, for all $M > 1$,
\begin{equation}\label{6.2}
\limsup\limits_{N} \;\mbox{\f $\dis\frac{1}{N^{d-2}}$} \; \log \IP[A_N] \le - \mbox{\f $\dis\frac{1}{d}$} \;(\sqrt{\ov{u}} - \sqrt{u})^2 {\rm cap}_{\IR^d}([-1,1]^d),  
\end{equation}
where $A_N = \{B_N \overset{\cV^u}{\nleftrightarrow} S_N\}$ and ${\rm cap}_{\IR^d}([-1,1]^d)$ stands for the Brownian capacity of $[-1,1]^d$.
\end{theorem}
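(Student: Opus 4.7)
The plan is to combine the super-exponential estimate of Theorem~\ref{theo5.1} with the occupation-time bound of Theorem~\ref{theo3.2}, following the coarse-graining philosophy of Section~5 of \cite{Szni}. Fix $0<u<\overline{u}$. The first step is to choose $\gamma>u$ close to $u$, then $\alpha>\beta>\gamma$ in $(u,\overline{u})$ with $\gamma$ arbitrarily close to $\overline{u}$, a small $\varepsilon\in(0,1)$ with $\varepsilon(\sqrt{\overline{u}/u}-1)<1$, an integer $K\ge c_5(\varepsilon)\vee c_8(\alpha,\beta,\gamma)$, and $L=[(\Gamma N\log N)^{1/(d-1)}]$ with $\Gamma$ large (to be chosen at the end). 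I would then split
\[
\mathbb{P}[A_N]\le \mathbb{P}[C_N]+\mathbb{P}[A_N\cap C_N^c],
\]
and discard $\mathbb{P}[C_N]$ thanks to Theorem~\ref{theo5.1}, whose super-exponential decay at rate $N^{d-2}$ is strictly stronger than the bound we seek.

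On the event $A_N\cap C_N^c$ there are at most $\rho(N/L)^{d-1}$ columns containing a bad$(\alpha,\beta,\gamma)$ box, so at least $(1-\rho)(N/L)^{d-1}$ columns are ``entirely good''. In each entirely good column, Lemma~\ref{lem6.1} applied along the sequence of neighboring $L$-boxes from the innermost face to the outermost one shows that the disconnection event $A_N$ forces the existence of at least one box $B=B_z$ in the column with $N_u(D_z)\ge\beta\,\mathrm{cap}(D_z)$, for otherwise a vacant path in $\mathcal{V}^u$ would link $B_N$ to $S_N$. Selecting one such ``blocking'' box per entirely good column yields a random subset $\mathcal{C}_0\subseteq \mathbb{L}$, of cardinality at least $(1-\rho)(N/L)^{d-1}$, each of whose $B_z$ is good$(\alpha,\beta,\gamma)$ with $N_u(D_z)\ge\beta\,\mathrm{cap}(D_z)$. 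A standard coloring argument (as below (\ref{5.8})) further extracts a subcollection $\mathcal{C}\subseteq \mathcal{C}_0$ of cardinality $\ge \overline{K}^{-d}(1-\rho)(N/L)^{d-1}$ which satisfies the spacing condition~(\ref{3.1}). Applying a union bound over all such deterministic $\mathcal{C}$, Theorem~\ref{theo3.2} gives
\[
\mathbb{P}[A_N\cap C_N^c]\le \sum_{\mathcal{C}}\exp\Bigl\{-\Bigl(\sqrt{\gamma}-\tfrac{\sqrt{u}}{1-\varepsilon(\sqrt{\overline{u}/u}-1)}\Bigr)(\sqrt{\gamma}-\sqrt{u})\,\mathrm{cap}(C)\Bigr\}.
\]
The number of admissible $\mathcal{C}$ is bounded by $\binom{c(N/L)^d}{c'(N/L)^{d-1}}\exp\{o(N^{d-2})\}$, and by Stirling, with $L\asymp (N\log N)^{1/(d-1)}$, this is $\exp\{c''\Gamma^{-1}N^{d-2}\}$; taking $\Gamma$ large makes the combinatorial factor negligible on the exponential scale $N^{d-2}$.

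To finish it remains to bound $\mathrm{cap}(C)$ from below, uniformly in the random choice of $\mathcal{C}$, with $C=\bigcup_{z\in\mathcal{C}}D_z$. Here I would reproduce verbatim the Wiener-criterion-plus-projection argument used at the end of the proof of Theorem~5.5 in \cite{Szni}: project the selected boxes onto the faces of $B_N$; the projection covers at least a $(1-\rho\overline{K}^d)$-fraction of each face, hence a set of surface volume $(1-o(1))\cdot 2d(2N+1)^{d-1}$; a Wiener-type capacity estimate then gives $\mathrm{cap}(C)\ge (1-o(1))\,\mathrm{cap}(B_N)$, while $\mathrm{cap}(B_N)/N^{d-2}\to \frac{1}{d}\,\mathrm{cap}_{\mathbb{R}^d}([-1,1]^d)$ by the classical passage from the discrete Newtonian kernel (\ref{1.4}) to its Brownian counterpart. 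Inserting this lower bound and letting first $N\to\infty$, then $\Gamma\to\infty$, $K\to\infty$, $\varepsilon\downarrow 0$, $\rho\downarrow 0$, and finally $\gamma\uparrow\overline{u}$ yields (\ref{6.2}).

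The main obstacle is the geometric/capacity step: one must show that the random set $C$, which is only constrained to hit at least $(1-\rho)$ of the columns attached to each face of $B_N$, still has capacity essentially equal to that of $B_N$ in the limit. The combinatorial factor $\exp\{o(N^{d-2})\}$ is the reason for the unusual choice $L\asymp (N\log N)^{1/(d-1)}$: it must absorb the entropy of all admissible $\mathcal{C}$, while remaining small enough that the super-polynomial (in $L$) decay of $\eta$ in (\ref{5.5}) translates to super-exponential (in $N^{d-2}$) decay in Theorem~\ref{theo5.1}. Every other ingredient---the reduction to occupation times via the goodness of boxes (condition (\ref{2.13})), and the use of the tilt $a\cdot V$ in the Laplace transform---has been prepared in the preceding sections and enters only through quoting Theorems~\ref{theo2.3},~\ref{theo3.2},~\ref{theo5.1}.
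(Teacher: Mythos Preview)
Your approach is essentially the same as the paper's, and the backbone is correct: split off $C_N$, force via Lemma~\ref{lem6.1} a blocking box in each entirely good column, union-bound over the selections using Theorem~\ref{theo3.2}, control the entropy via the choice of $L$, and close with the projection/Wiener argument from \cite{Szni}. Two points, however, need repair.

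First, working directly with $A_N$ is not quite legitimate. With the column definition of Section~5 the boxes sit in $\{x\cdot e>N\}$, so the path produced by Lemma~\ref{lem6.1} starts in $B^0\subseteq B_{N+L}$, not in $B_N$; hence the existence of such a path does \emph{not} contradict $A_N=\{B_N\overset{\cV^u}{\nleftrightarrow}S_N\}$. The paper handles this by first proving the bound for $\wh A_N=\{B_{(1+a)N}\overset{\cV^u}{\nleftrightarrow}S_N\}$ with a small $a\in(0,\tfrac{1}{10})$ and $M\ge 2$, and only afterwards recovering $A_N$ (and general $M>1$) by the scaling step $N'=[N/(1+a)]$, $M'=2(1+a)M$. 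You should insert this intermediate event.

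Second, your sub-sampling and projection claims are inconsistent. After a coloring that keeps only $\ov K^{-d}(1-\rho)(N/L)^{d-1}$ boxes, the projection onto $\partial_iB_N$ covers at most an $\ov K^{-d}$-fraction of the surface, not a $(1-\rho\ov K^{\,d})$-fraction. The paper does \emph{not} sparsify in $d$ dimensions: since each column already contributes a single box, it suffices to restrict the columns to a $(d-1)$-dimensional sublattice of spacing $\ov K L$ on each face (and drop columns within $\ov K L$ of a corner). This yields~(\ref{6.6}), and in particular the key property~(\ref{6.6})\,iv) that almost every point of $F_N$ lies within $\ov K L$ of $\wt C$; it is this property, not a surface-fraction bound, that feeds into the Wiener-criterion estimate and gives $\mathrm{cap}(\ov C)\ge (1-o(1))\,\mathrm{cap}(\wt C)\ge (1-o(1))\,\mathrm{cap}(B_N)$. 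Finally, note that $\rho\to 0$ is automatic as $N\to\infty$ (via~(\ref{5.5}),(\ref{5.6})), and $K$ stays fixed; the limits to take are $N\to\infty$, then $\Gamma\to\infty$, $\ve\downarrow 0$, $\gamma\uparrow\ov u$.
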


\begin{proof}
We first assume $M \ge 2$, pick $a \in (0,\frac{1}{10})$, and introduce the event
\begin{equation}\label{6.3}
\wh{A}_N = \{B_{(1 + a)N} \; \overset{^{\mbox{\footnotesize $\cV^u$}}}{\mbox{\Large $\longleftrightarrow$}} \hspace{-4ex}/ \quad \;\;S_N\}.
\end{equation}

\n
As a main step to (\ref{6.2}), we will first show that for all $M \ge 2$ and $0 < a < \frac{1}{10}$,
\begin{equation}\label{6.4}
\limsup\limits_{N} \;\mbox{\f $\dis\frac{1}{N^{d-2}}$} \; \log \IP[\wh{A}_N] \le - \mbox{\f $\dis\frac{1}{d}$} \;(\sqrt{\ov{u}} - \sqrt{u})^2 {\rm cap}_{\IR^d}([-1,1]^d).
\end{equation}
The claim (\ref{6.2}) will then quickly follow.

\medskip
We now prove (\ref{6.4}). We pick $\Gamma \ge 1$ (large), $\alpha > \beta > \gamma$ in ($u,\ov{u})$, as well as $\ve$ in $(0,1)$ such that $\ve (\sqrt{\frac{\ov{u}}{u}} - 1) < 1$ and $u(1 - \ve (\sqrt{\frac{\ov{u}}{u}} - 1))^{-2} < \gamma$. We then pick $L = [(\Gamma N \log N)^{\frac{1}{d-1}}]$ as in (\ref{5.1}) and $K \ge c_4(\alpha, \beta, \gamma) \vee c_5(\ve) \vee c_8(\alpha,\beta,\gamma)$, in the notation of Theorems \ref{theo2.3},  \ref{theo3.2} and \ref{theo5.1}. We then define $\eta$ as in (\ref{5.4}) and $\rho$ as in (\ref{5.6}).

\medskip
We recall the notation from below (\ref{5.1}) concerning columns. We know from Lemma \ref{lem6.1} that for large $N$, if all boxes in a column are good$(\alpha, \beta, \gamma)$, and the variables $N_u(D)$ corresponding to the various boxes in the column are smaller than $\beta \,{\rm cap}(D)$, then there exists a path in $\cV^u$ from $B_{(1 + a)N}$ to $S_N$ (here, we tacitly use of our choice of $a \in (0,\frac{1}{10})$ and $M \ge 2 > 1+a$). Hence, for large $N$, on $\wh{A}_N$ all columns contain an $L$-box, which is bad$(\alpha, \beta, \gamma)$, or such that $N_u(D) \ge \beta \,{\rm cap}(D)$.

\medskip
By definition of $C_N$ in (\ref{5.7}), we see that for large $N$,
\begin{equation}\label{6.5}
\begin{array}{l}
\mbox{on $D_N = \wh{A}_N \backslash C_N$, except for at most $\rho (\frac{N}{L})^{d-1}$ columns,}
\\
\mbox{all columns contain a box such that $N_u(D) \ge \beta \,{\rm cap}(D)$}.
\end{array}
\end{equation}

\n
Thus, on the event $D_N$ we can throw away $[\rho (\frac{N}{L})^{d-1}]$ columns, and then, for each column in the remaining set of columns, select a box $B$, which is good$(\alpha, \beta, \gamma)$, and with corresponding $N_u(D) \ge \beta\,{\rm cap}(D)$. We further remove columns that have their projection on the face $F_{e,N}$ attached to the column, at distance less than $\ov{K} L$ from any other face $F_{e',N}$, $e' \not= e$ (recall that $\ov{K} = 2 K + 3$, see (\ref{3.1})). Then, restricting to a sub-lattice, we only keep columns attached to a given face $F_{e,N}$, with $|e| = 1$, which are at mutual $|\cdot|_\infty$-distance at least $\ov{K}L$ (specifically, we only keep columns of boxes that have labels $z \in \IL$ such that the projection of $z$ on the orthogonal space to $e$ belongs to $\ov{K} \IL$).

\medskip
We write $\wt{C}$ for the subset of $\partial_i B_N = \bigcup_{|e| = 1} F_{e,N}$ (the internal boundary of $B_N$) obtained by projecting the selected columns onto the face $F_{e,N}$ attached to the respective columns, and write $F_N$ for the set of points of $\partial_i B_N$ that belong to a single face $F_{e,N}$ and are at $|\cdot|_\infty$-distance at least $\ov{K} L$ from all other faces $F_{e',N}$, $e' \not= e$.

\medskip
In the fashion described above, selecting a subset of columns in at most $2^{c(\frac{N}{L})^{d-1}}$ ways and in each selected column a box in at most $(M+1) N/L$ ways, we see that there is a family with cardinality at most $\exp\{c_9(\frac{N}{L})^{d-1} \log ((M+1)N)]$ of finite subsets $\cC$ of $L$-boxes such that
\begin{equation}\label{6.6}
\left\{ \begin{array}{rl}
{\rm i)} & \mbox{the boxes $B$ in $\cC$ belong to mutually distinct columns},
\\[1ex]
{\rm ii)} & \mbox{the columns containing a box of $\cC$ are at mutual $|\cdot |_\infty$-distance}\\
& \mbox{at least $\ov{K}L$},
\\[1ex]
{\rm iii)} & \wt{C} \subseteq F_N,
\\[1ex]
{\rm iv)} & \mbox{at most $c_{10}(\ov{K}) (N^{d-2}L + \rho N^{d-1})$ points of $F_N$ are at $|\cdot |_\infty$-distance}\\
&\mbox{bigger than $\ov{K}L$ of $\wt{C}$},
\end{array}\right.
\end{equation}
\n
(recall that $\wt{C}$ is obtained by projecting the boxes of $\cC$ on the face of $B_N$ attached to the column where the box sits).

\medskip
The above procedure yields a ``coarse graining'' of the event $D_N$, in the sense that for large $N$,
\begin{equation}\label{6.7}
D_N \subseteq \textstyle \bigcup\limits_{\cC} D_{N,\cC}, \;\mbox{where $D_{N,\cC} = \bigcap\limits_{B \in \cC} \{B$ is good$(\alpha,\beta,\gamma)$ and $N_u(D) \ge \beta \,{\rm cap}(D)\}$}
\end{equation}
(and $\cC$ runs over a family with cardinality at most $\exp\{c_9 (\frac{N}{L})^{d-1} \log ((M+1)N)\}$ with (\ref{6.6}) fulfilled).

\medskip
By Theorem \ref{theo3.2}, we can bound $\IP[D_{N,\cC}]$ uniformly in the above collection and find that
\begin{equation}\label{6.8}
\begin{array}{l}
\limsup\limits_N \; \mbox{\f $\dis\frac{1}{N^{d-2}}$} \;\log \IP[D_N] \le 
\\[1ex]
\limsup\limits_N \; \bigg\{ c_9 \Big(\mbox{\f $\dis\frac{N}{L}$}\Big)^{d-1} \;\mbox{\f $\dis\frac{\log((M+1)N)}{N^{d-2}}$} \; -  
\\[2ex]
\Big(\sqrt{\gamma} - \frac{\sqrt{u}}{1- \ve(\sqrt{\frac{\ov{u}}{u}} - 1)} \Big) \; (\sqrt{\gamma} - \sqrt{u}) \; \inf\limits_{\cC} \; \mbox{\f $\dis\frac{{\rm cap}(\ov{C})}{N^{d-2}}$}\,\bigg\} \stackrel{(\ref{5.1})}{\le} \mbox{\f $\dis\frac{c_9}{\Gamma}$} \; -
\\[2ex]
(\sqrt{\gamma} - \sqrt{\wt{u}})^2 \; \liminf\limits_N \; \mbox{\f $\dis\frac{1}{N^{d-2}}$} \; \inf\limits_{\cC} \; {\rm cap}(\ov{C}),
\end{array}
\end{equation}

\n
with $\wt{u} = u(1 - \ve( \mbox{\f $\sqrt{\mbox{\normalsize $\frac{\ov{u}}{u}$}}$} - 1))^{-2}$ $< \gamma$ (by our choice of $\ve$ below (\ref{6.4})), and where we have set $\ov{C} = \bigcup_{B \in \cC} B$ and used that $\ov{C} \subseteq C = \bigcup_{B \in \cC} D$ in the notation of (\ref{3.3}) (the upper bound in (\ref{3.15}) of Theorem \ref{theo3.2} is actually stronger than what we use in (\ref{6.8})).

\medskip
By the same arguments leading to (5.37) of \cite{Szni} and Lemma 5.6 of the same reference (which is based on a Wiener-type criterion), we know that
\begin{equation}\label{6.9}
\liminf\limits_N \; \inf\limits_\cC \; \mbox{\f $\dis\frac{{\rm cap}(\ov{C})}{{\rm cap}(\wt{C})}$} \ge 1 \;\mbox{and} \; \liminf\limits_N \; \inf\limits_{\cC} \;  \mbox{\f $\dis\frac{{\rm cap}(\wt{C})}{{\rm cap}(B_N)}$}  \ge 1.
\end{equation}
(incidentally, the last inequality actually is an equality).

As a result, coming back to (\ref{6.8}), we obtain that
\begin{equation}\label{6.10}
\begin{split}
\limsup\limits_N \; \mbox{\f $\dis\frac{1}{N^{d-2}}$} \;\log \IP[D_N]  & \le \mbox{\f $\dis\frac{c_9}{\Gamma}$} - (\sqrt{\gamma} - \sqrt{\wt{u}})^2 \; \liminf\limits_N \; \mbox{\f $\dis\frac{{\rm cap}(B_N)}{N^{d-2}}$}
\\
& =  \mbox{\f $\dis\frac{c_9}{\Gamma}$} - \mbox{\f $\dis\frac{1}{d}$}\; (\sqrt{\gamma} - \sqrt{\wt{u}})^2 \;{\rm cap}_{\IR^d} ([ -1,1]^d),
\end{split}
\end{equation}

\medskip\n
where we used (3.14) of \cite{Szni}, and recall that $\wt{u} = u(1 - \ve (\sqrt{\frac{\ov{u}}{u}} - 1))^{-2} ( < \gamma)$.

\medskip
We can now bring into play the super-exponential estimate of Theorem \ref{theo5.1} and find  \begin{equation*}
\begin{split}
\limsup\limits_N \; \mbox{\f $\dis\frac{1}{N^{d-2}}$} \;\log \IP[\wh{A}_N]  & \le \limsup\limits_N\;  \mbox{\f $\dis\frac{1}{N^{d-2}}$} \;\max (\log \IP[D_N], \log \IP[C_N])
\\
& \le  \mbox{\f $\dis\frac{c_9}{\Gamma}$} - \mbox{\f $\dis\frac{1}{d}$}\; (\sqrt{\gamma} - \sqrt{\wt{u}})^2 \;{\rm cap}_{\IR^d} ([ -1,1]^d).
\end{split}
\end{equation*}

\n
Letting $\Gamma$ tend to infinity, $\ve$ tend to zero, and then $\gamma$ tend to $\ov{u}$, we find (\ref{6.4}).

\medskip
We will now prove (\ref{6.2}). We introduce $N' = [\frac{N}{1 + a}]$ with $a \in (0,\frac{1}{10})$ as above (\ref{6.4}), and $M' = 2(1+ a)M$ (so that $M' \ge 2$). We denote by $A'_N$ the event where $N$ in (\ref{6.3}) is replaced by $N'$ and $M$ (entering the definition of $S_N$) is replaced by $M'$. For large $N$, we have $(1 + a)N' \le N$ and $M' N' \ge MN$, so that $A_N \subseteq A_{N'}$. As a result of this inclusion and (\ref{6.4}) we see that
\begin{equation}\label{6.11}
\begin{split}
\limsup\limits_N \; \mbox{\f $\dis\frac{1}{N^{d-2}}$} \;\log \IP[A_N]  & \le \limsup\limits_N\;  \Big(\mbox{\f $\dis\frac{N'}{N}$} \Big)^{d-2}\;   \mbox{\f $\dis\frac{1}{N'^{(d-2)}}$} \;\log \IP[A'_N] 
\\
&\!\! \stackrel{(\ref{6.4})}{\le} -  \mbox{\f $\dis\frac{1}{(1 + a)^{d-2}}$} \;\mbox{\f $\dis\frac{1}{d}$}\; (\sqrt{\ov{u}} - \sqrt{{u}})^2 \;{\rm cap}_{\IR^d} ([ -1,1]^d).
\end{split}
\end{equation}

\n
Letting $a$ tend to zero, we obtain (\ref{6.2}). This concludes the proof of Theorem \ref{theo6.3}.
\end{proof}
        
We will now deduce from Theorem \ref{theo6.3} an asymptotic upper bound on the probability that simple random walk disconnects $B_N$ from $S_N$. We denote by $\cI$ the set of points visited by the simple random walk and by $\cV = \IZ^d \backslash \cI$ its complement. We recall that $P_0$ governs the law of the walk starting from the origin, and $S_N = \{x \in \IZ^d; |x|_\infty = [MN]\}$. 

\begin{corollary}\label{cor6.4} $(d \ge 3$, $\ov{u}$ as in (\ref{2.3}))

\medskip
For any $M > 1$,
\begin{equation}\label{6.12}
\limsup\limits_N \; \mbox{\f $\dis\frac{1}{N^{d-2}}$} \;\log \IP_0 [B_N \;\overset{^{\mbox{\footnotesize $\cV$}}}{\mbox{\Large $\longleftrightarrow$}} \hspace{-4ex}/ \quad \;\;S_N] \le \mbox{\f $-\dis\frac{1}{d}$} \; \ov{u} \,{\rm cap}_{\IR^d}([-1,1]^d).
\end{equation}
\end{corollary}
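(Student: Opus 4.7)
The plan is to deduce (\ref{6.12}) from Theorem \ref{theo6.3} by letting $u \downarrow 0$, exploiting the natural coupling invoked in Section~7 of \cite{Szni} between $\cV^u$ under $\IP[\,\cdot\,|\, 0 \in \cI^u]$ and $\cV$ under $P_0$, which ensures $\cV^u \subseteq \cV$. The idea is that under $\IP[\,\cdot\,|\, 0 \in \cI^u]$ one may isolate the trajectory in the Poisson cloud with smallest label among those visiting~$0$; decomposing it at its first visit to $0$, the forward part has the law of the continuous-time simple random walk starting from~$0$. Since its range sits inside $\cI^u$, identifying this forward part with the walk under $P_0$ produces a coupling in which $\cI \subseteq \cI^u$, i.e.\ $\cV^u \subseteq \cV$. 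Consequently any path in $\cV^u$ from $B_N$ to $S_N$ gives a path in $\cV$, and contrapositively the event $\{B_N \,\overset{\cV}{\nleftrightarrow}\, S_N\}$ is contained in $A_N$.

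Under this coupling we therefore obtain
\begin{equation*}
P_0\big[B_N \,\overset{\cV}{\nleftrightarrow}\, S_N\big] \;\le\; \IP\big[A_N \,\big|\, 0 \in \cI^u\big] \;\le\; \frac{\IP[A_N]}{\IP[0 \in \cI^u]}.
\end{equation*}
The denominator $\IP[0 \in \cI^u] = 1 - e^{-u/g(0)}$ is strictly positive for every $u>0$ (by applying the identity $\IP[\cI^u \cap A = \emptyset] = e^{-u\,{\rm cap}(A)}$ to $A=\{0\}$, with ${\rm cap}(\{0\})=1/g(0)$), and crucially it does not depend on $N$. Taking logarithms, dividing by $N^{d-2}$, and sending $N \to \infty$, the denominator's contribution vanishes, and Theorem \ref{theo6.3} yields, for every $u \in (0,\ov{u})$,
\begin{equation*}
\limsup_N \; \mbox{\f $\dis\frac{1}{N^{d-2}}$}\; \log P_0\big[B_N \,\overset{\cV}{\nleftrightarrow}\, S_N\big] \;\le\; - \mbox{\f $\dis\frac{1}{d}$}\, (\sqrt{\ov{u}} - \sqrt{u})^2 \,{\rm cap}_{\IR^d}([-1,1]^d).
\end{equation*}

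Since the left-hand side does not depend on $u$, one takes $u \downarrow 0$ in the right-hand side to obtain the announced bound $-\frac{1}{d}\,\ov{u}\,{\rm cap}_{\IR^d}([-1,1]^d)$. There is no genuine obstacle here: all the analytic work has been carried out in Theorem \ref{theo6.3}, and the only ingredients to check are the two cited above, namely the construction of the coupling (recalled in Section~7 of \cite{Szni}) and the $N$-independent positivity of $\IP[0 \in \cI^u]$ for $u>0$.
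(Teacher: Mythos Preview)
Your proposal is correct and follows essentially the same route as the paper's proof: both use the coupling of $\cV$ under $P_0$ with $\cV^u$ under $\IP[\,\cdot\,|\,0\in\cI^u]$ (ensuring $\cV^u\subseteq\cV$) to bound $P_0[B_N\overset{\cV}{\nleftrightarrow}S_N]$ by $\IP[A_N]/\IP[0\in\cI^u]$, then apply Theorem~\ref{theo6.3} and let $u\downarrow 0$. Your description of the coupling and the computation $\IP[0\in\cI^u]=1-e^{-u/g(0)}$ match the paper's treatment.
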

           
\begin{proof}
The argument is similar to that of Corollary 7.3 of \cite{Szni}. Given $u > 0$, we can find a coupling $P$ of $\cI^u$ under $\IP[\cdot \,| 0 \in \cI^u]$ and $\cI$ under $P_0$, so that $P$-a.s., $\cI \subseteq \cI^u$. As a result, we have
\begin{equation}\label{6.13}
\begin{split}
P_0 [B_N \;\overset{^{\mbox{\footnotesize $\cV$}}}{\mbox{\Large $\longleftrightarrow$}} \hspace{-4ex}/ \quad \;\;S_N] & = P [B_N \;\overset{^{\mbox{\footnotesize $\cV$}}}{\mbox{\Large $\longleftrightarrow$}} \hspace{-4ex}/ \quad \;\;S_N] \le P [B_N \;\overset{^{\mbox{\footnotesize $\cV^u$}}}{\mbox{\Large $\longleftrightarrow$}} \hspace{-4ex}/ \quad \;\;S_N]
\\
& = \IP [B_N\;\overset{^{\mbox{\footnotesize $\cV^u$}}}{\mbox{\Large $\longleftrightarrow$}} \hspace{-4ex}/ \quad \;\;S_N\,| 0 \in \cI^u] \le (1 - e^{-\frac{u}{g(0)}})^{-1}\, \IP [B_N\;\overset{^{\mbox{\footnotesize $\cV^u$}}}{\mbox{\Large $\longleftrightarrow$}} \hspace{-4ex}/ \quad \;\; S_N],
\end{split}
\end{equation}

\medskip\n
with $g(0)$ as in (\ref{1.4}) (and $e^{-\frac{u}{g(0)}} = \IP[0 \in \cV^u]$). By Theorem \ref{theo6.3}, it now follows that for any $0 < u < \ov{u}$,
\begin{equation}\label{6.14}
\limsup\limits_N \; \mbox{\f $\dis\frac{1}{N^{d-2}}$} \;\log P_0 [B_N \;\overset{^{\mbox{\footnotesize $\cV$}}}{\mbox{\Large $\longleftrightarrow$}} \hspace{-4ex}/ \quad \;\;S_N] \le \mbox{\f $-\dis\frac{1}{d}$} \;(\sqrt{\ov{u}} - \sqrt{u})^2 \,{\rm cap}_{\IR^d}([-1,1]^d).
\end{equation}
Letting $u \rightarrow 0$ yields (\ref{6.12}).
\end{proof}

\begin{remark}\label{rem6.5} \rm ~  

\medskip\n
1) As mentioned in the Introduction, it is plausible, but not known at the moment, that $\ov{u} = u_* = u_{**}$. If this is the case, then \cite{LiSzni14} and \cite{Li} yield matching asymptotic lower bounds for Theorem \ref{theo6.3} and for Corollary \ref{cor6.4}.

\medskip\n
2) When $K$ is a (suitably regular) compact subset of $\IR^d$, and $K_N = (NK) \cap \IZ^d$ its discrete blow-up, one can wonder whether in the case of random interlacements on $\IZ^d$ (with hopefully obvious notation)
\begin{equation}\label{6.15}
\lim\limits_N \;\mbox{\f $\dis\frac{1}{N^{d-2}}$} \;\log \IP[K_N \;\overset{^{\mbox{\footnotesize $\cV^u$}}}{\mbox{\Large $\longleftrightarrow$}} \hspace{-4ex}/ \quad \;\;\infty] = -\mbox{\f $\dis\frac{1}{d}$} \;(\sqrt{u}_* - \sqrt{u})^2 \,{\rm cap}_{\IR^d}(K), \;\mbox{for $0 < u < u_*$},
\end{equation}
and in the case of simple random walk on $\IZ^d$,
\begin{equation}\label{6.16}
\lim\limits_N \;\mbox{\f $\dis\frac{1}{N^{d-2}}$} \;\log \IP_0[K_N \;\overset{^{\mbox{\footnotesize $\cV$}}}{\mbox{\Large $\longleftrightarrow$}} \hspace{-4ex}/ \quad \;\;\infty] = -\mbox{\f $\dis\frac{1}{d}$} \;u_*\,{\rm cap}_{\IR^d}(K).
\end{equation}

\n
Asymptotic lower bounds, with liminf in place of lim, and $u_{**}$ in place of $u_{*}$, are shown in \cite{LiSzni14}, in the case of (\ref{6.15}), and in \cite{Li}, in the case of (\ref{6.16}).

\bigskip\n
3)
Possibly, some of the techniques developed in this article might be helpful to improve the results of \cite{Wind08b} concerning the disconnection time of simple random walk with a slight bias in a discrete cylinder with a large periodic base (see Theorems 1.1, 1.2 and Remark 6.7 of \cite{Wind08b}). \hfill $\square$

\end{remark}

\appendix
\section{Appendix}
\setcounter{equation}{0}

In this appendix we provide the proof of Lemma \ref{lem1.3}. The arguments are similar to p.~50 of 
\cite{Lawl91}. The proof is included for the reader's convenience.

\bigskip\n
{\it Proof of Lemma \ref{lem1.3}.} The uniqueness in the statement is immediate. Only the existence is at stake. We first observe that if $z \in \partial U$ does not belong to the boundary the connected component of $U$ containing $B(0,L)$, all three members of (\ref{1.21}) vanish (and (\ref{1.21}) holds with $\psi^{A,U}_{y,z} = 0$). We can thus assume from now on that $U$ is connected.

\medskip
The first equality $P_z[H_A < \wt{H}_{\partial U}, X_{H_A} = y] = P_y[T_U < \wt{H}_A, X_{T_U} = z]$, with $y \in A$ and $z \in \partial U$, is obtained by time-reversal (one sums over the possible values in the discrete skeleton of the walk, of the entrance time in $A$, for the probability on the left, and of the exit time of $U$, for the probability on the right). It then suffices to show that when $K \ge c$, for $y \in A$, $z \in \partial U$, one has
\begin{equation}\label{A.1}
P_y[T_U < \wt{H}_A, X_{T_U} = z] = e_A(y) \,P_0 [X_{T_U} = z] (1 + \psi), \;\mbox{with $|\psi | \le c'/K$}.
\end{equation}
One introduces the function on $\IZ^d$
\begin{equation*}
h(x) = P_x[X_{T_U} = z], \;\;\mbox{for $x \in \IZ^d$}.
\end{equation*}

\n
It is positive and harmonic in $U \supseteq B(0,KL)$. When $K \ge c$, it follows from the gradient control in Theorem 1.7.1, on p.~42 of \cite{Lawl91}, and the Harnack inequality in Theorem 1.7.2, on p.~42 of \cite{Lawl91}, that for $x  \in D \cup \partial D$, where $D = B(0,2L)$,
\begin{equation}\label{A.2}
|h(x) - h(0)| \le c \mbox{\f $\dis\frac{|x|}{KL}$} \; \sup\limits_{B(0,\frac{KL}{2})} h \le c \mbox{\f $\dis\frac{|x|}{KL}$} \;h(0)
\end{equation}
(using chaining in the last step).

\medskip
Noting that $T_D$ happens before $T_U$, we find that by the strong Markov property
\begin{equation}\label{A.3}
P_y[T_U  < \wt{H}_A, X_{T_U} = z] = E_y\big[T_D < \wt{H}_A, P_{X_{T_D}} [T_U < H_A, X_{T_U} = z]\big].
\end{equation}
Now, for $x \in \partial D$ we have
\begin{equation}\label{A.4}
\begin{array}{l}
P_x [T_U < H_A, X_{T_U} = z] = h(x) - P_x[H_A < {T_U}, X_{T_U} = z] =
\\
h(x) - P_x[H_A + T_D \circ \theta_{H_A} < T_U, X_{T_U} = z] \stackrel{\rm strong\;Markov}{=} 
\\[0.5ex]
h(x) - E_x[H_A + T_D \circ \theta_{H_A} < T_U, h(X_{T_D} \circ \theta_{H_A})] =
\\[0.5ex]
h(x) - E_x[H_A < T_U, h(X_{T_D} \circ \theta_{H_A})].
\end{array}
\end{equation}
As a result of (\ref{A.2}) we see that
\begin{equation}\label{A.5}
\varphi = \mbox{\f $\dis\frac{1}{h(0)}$} \;\big(h - h(0)\big) \; \mbox{satisfies} \; \sup\limits_{D \cup \partial D} |\varphi| \le \wt{c}/K.
\end{equation}
In addition, it follows by (\ref{A.4}) that for $x \in \partial D$
\begin{equation}\label{A.6}
P_x[T_U < H_A, X_{T_U}= z] = h(0) (P_x[T_U < H_A] + \varphi(x) - E_x[H_A < T_U, \varphi(X_{T_D} \circ \theta_{H_A})]).
\end{equation}
Further, we note that for $x \in \partial D$
\begin{equation}\label{A.7}
P_x[T_U < H_A] \ge P_x[H_{B(0,L)} = \infty] \ge c,
\end{equation}
and setting for $x \in \partial D$
\begin{equation}\label{A.8}
\wt{
\varphi}(x) = \varphi(x) / P_x[T_U < H_A], \; \mbox{we have} \; \sup\limits_{\partial D} |\wt{\varphi}| \le c/K.
\end{equation}
As a result, we obtain that for $x \in \partial D$
\begin{equation}\label{A.9}
P_x[T_U < H_A, X_{T_U}= z] = h(0) \, P_x[T_U < H_A] \big(1 + \gamma(x)\big),
\end{equation}
where we have set
\begin{equation}\label{A.10}
\gamma(x) = \wt{\varphi}(x) - \mbox{\f $\dis\frac{1}{P_x[T_U < H_A]}$}  \; E_x[H_A < T_U, \varphi(X_{T_D} \circ \theta_{H_A})], \; \mbox{so that $\sup\limits_{\partial D} |\gamma | \le c/K$}.
\end{equation}     

\n
Coming back to (\ref{A.3}) we obtain (with $X_{T_D}$ playing the role of $x$ in (\ref{A.9})) that
\begin{equation}\label{A.11}
\begin{array}{l}
P_y[T_U < \wt{H}_A, X_{T_U} = z] = h(0) \,E_y\big[T_D < \wt{H}_A, P_{X_{T_D}} [T_U < H_A] \big(1 + \gamma(X_{T_D})\big)\big] =
\\[1ex]
P_0[X_{T_U} = z] \,e_A(y) \;\mbox{\f $\dis\frac{P_y[T_U < \wt{H}_A]}{P_y[\wt{H}_A = \infty]}$} \; E_y [1 + \gamma(X_{T_D})\,| \,T_U < \wt{H}_A],
\end{array}
\end{equation}

\n
where we note that when $e_{A,U}(y) = P_y [T_U < \wt{H}_A] > 0$, then $e_A(y) = P_y[\wt{H}_A = \infty] > 0$ (because $e_{A,U}(y) \le e_{A,B(0,KL)}(y)$ and (\ref{1.16})), and the ratio above is understood as $1$ if $e_A(y)$ vanishes. We thus see that the ratio in the last line of (\ref{A.11}) lies between $1$ and $1 + \rho_{A,B(0,KL)} \le (1 - \frac{c}{K^{d-2}})^{-1}_+$, by (\ref{1.18}) of Lemma \ref{lem1.2} and (\ref{1.8}). 

\medskip
Thus, looking at the last line of (\ref{A.11}), we see that when $K \ge c$, 
\begin{equation*}
P_y[T_U < \wt{H}_A, X_{T_U} = z] = e_A(y) \,P_0[X_{T_U} = z] (1 + \psi) \; \mbox{with} \; |\psi | \le c'/K,
\end{equation*}
i.e. (\ref{A.1}) holds and this proves Lemma \ref{lem1.3}. \hfill $\square$

\end{document}